\newtheorem{thm}{Theorem}[section]
\crefname{thm}{Theorem}{Theorems}
\newtheorem{lem}[thm]{Lemma}
\crefname{lem}{Lemma}{Lemmas}
\crefname{conj}{Conjecture}{Conjectures}
\newtheorem{claim}[thm]{Claim}
\crefname{claim}{Claim}{Claims}
\newtheorem{prop}[thm]{Proposition}
\crefname{prop}{Proposition}{Propositions}
\newtheorem{cor}[thm]{Corollary}
\crefname{cor}{Corollary}{Corollaries}
\crefname{property}{Property}{Properties}
\crefname{que}{Question}{Questions}
\theoremstyle{definition}
\newtheorem{defn}[thm]{Definition}
\crefname{defn}{Definition}{Definitions}
\newtheorem{rmk}[thm]{Remark}
\crefname{rmk}{Remark}{Remarks}
\numberwithin{equation}{section}
\crefname{ex}{Example}{Examples}
\newtheorem{cons}[thm]{Construction}
\crefname{cons}{Construction}{Constructions}
\def\Q{{\mathbb Q}}
\def\R{{\mathbb R}}
\def\Z{{\mathbb Z}}
\def\P{{\mathbb P}}
\def\A{{\mathbb A}}
\def\Hom{\mathop{\mathrm{Hom}}\nolimits}
\def\q{{ \mathfrak{q}}}     
\def\a{{ \mathfrak{a}}}     
\def\O{{ \mathcal{O}}}
\def\m{{ \mathfrak{m}}}  
\def\I{{ \mathcal{I}}}
\def\D{{ \mathcal{D}}}
\def\E{{ \mathcal{E}}}
\def\L{{ \mathcal{L}}}
\def\M{{ \mathcal{M}}}
\def\Y{{ \mathcal{Y}}}
\def\W{{ \mathcal{W}}}
\def\cR{{ \mathcal{R}}}
\def\K{{ \mathcal{K}}}
\def\cZ{{ \mathcal{Z}}}
\def\KK{ \overline{K}}
\def\Xbar{ \overline{X}}
\def\gl{ \gg \ll}
\def\tD{ \widetilde{D}}
\def\tY{ \widetilde{Y}}
\DeclareMathOperator{\pr}{pr}
\DeclareMathOperator{\id}{id}
\DeclareMathOperator{\Spec}{Spec}
\DeclareMathOperator{\Proj}{Proj}
\DeclareMathOperator{\Supp}{Supp}
\DeclareMathOperator{\ch}{char} 
\DeclareMathOperator{\Ass}{Ass} 
\DeclareMathOperator{\di}{div} 
\DeclareMathOperator{\Frac}{Frac} 
\DeclareFontFamily{U}{mathc}{}
\DeclareFontShape{U}{mathc}{m}{it}%
{<->s*[1.03] mathc10}{}
\DeclareMathAlphabet{\mathscr}{U}{mathc}{m}{it}
\DeclareMathOperator{\sHom}{\mathscr{Hom}}
\newenvironment{claimproof}[0]
  {%
   \paragraph{\it Proof.}%
  }
  {%
    \hfill$\blacksquare$%
  }
\title{Height functions associated with closed subschemes}
\author{Yohsuke Matsuzawa\thanks{matsuzawa@math.brown.edu}
\thanks{Department of Mathematics, Box 1917, Brown University, Providence, Rhode Island 02912, USA}}
\begin{document}

\maketitle

\begin{abstract}
This is an expository note on height functions associated with closed subschemes.
This manuscript contains nothing new.
\end{abstract}

\setcounter{tocdepth}{2}
\tableofcontents

\section{Introduction}

This is an expository note on height function associated with closed subschemes with detailed proof.
Height functions associated with closed subschemes are introduced by Silverman in \cite{sil87}.
They are generalization of usual height functions associated to Cartier divisors.
Standard references for height functions associated with Cartier divisors are \cite{bg, Lan, hs}.

In most literatures, the base schemes on which we define height functions are assumed to be irreducible and reduced.
In this note, we do not assume these properties.

\section{Notation}
Let $k$ be a field.
\begin{enumerate}
\item A projective scheme over $k$ is a scheme over $k$ which has a closed immersion to the projective space $\P^{n}_{k}$ for some $n \in \Z_{>0}$ over $k$.
\item A quasi-projective scheme over $k$ is a scheme over $k$ which has an open immersion to a projective scheme over $k$.
\item An algebraic scheme over $k$ is a separated scheme of finite type over $k$.
\item Let $X$ be a scheme over $k$ and $k \subset k'$ be a field extension. The base change $X \times_{\Spec k}\Spec k'$ is denoted by $X_{k'}$.
For an ``object" $A$ on $X$, we sometimes use the notation $A_{k'}$ to express the base change of $A$ to $k'$ 
without mentioning to the definition of the base change if the meaning is clear.
\item For a Noetherian scheme $X$, the set of associated point of $X$ is denoted by $\Ass(X)$.
\end{enumerate}

\section{Absolute Values}

\subsection{Absolute values}
In this section, we recall basic facts on absolute values that we use to define height function later.
A good reference for absolute values is \cite[Chapter 9]{jacob}.

\begin{defn}
Let $K$ be a field.
An absolute value on $K$ is a map
\[
|\ |_{v} \colon K \longrightarrow \R_{\geq 0}; x \mapsto |x|_{v}
\]
which satisfies the following properties:
\begin{enumerate}
\item $|xy|_{v} = |x|_{v}|y|_{v}$ for all $x,y \in K$;
\item $|x+y|_{v}\leq |x|_{v}+|y|_{v}$ for all $x,y \in K$;
\item for $x\in K$, $|x|_{v}=0$ if and only if $x=0$.
\end{enumerate}
The absolute value $|\ |_{v}$ is called non-archimedean if the following holds instead of (2):
\[
|x+y|_{v}\leq \max\{|x|_{v}, |y|_{v}\} \quad \text{for all $x,y \in K$.}
\]
An absolute value which is not non-archimedean is called archimedean.
Two absolute values are said to be equivalent of they define the same topology on $K$.
\end{defn}

\begin{rmk}
An absolute value $|\ |_{v}$ is non-archimedean if and only if 
$|n\cdot 1|_{v} \leq 1$ for all $n \in \Z$ (the $1$ is the identity element of our field $K$).
\end{rmk}

We use the subscript $v$ to distinguish several absolute values.
We sometimes refer an absolute value as $v$ instead of $|\ |_{v}$ for simplicity.
In this note, even if we write simply $v$, this does not mean the place.

\begin{defn}
Let $K$ be a field.
Let $|\ |_{v}$ be a non-trivial absolute value on $K$.
We say $|\ |_{v}$ is well-behaved if for any finite field extension $K \subset L$, we have
\[
[L:K] = \sum_{w|v} [L_{w}:K_{v}] 
 \]
 where the sum runs over all absolute values $w$ on $L$ which extend $v$.
 (The notation $w|v$ means the restriction of $w$ on $K$ is $v$.)
 Here $L_{w}$ and $K_{v}$ are the completion of $L$ and $K$ with respect to $w, v$ respectively.
\end{defn}

\begin{rmk}
In the setting of the above definition,
the inequality $[L:K] \leq \sum_{w|v} [L_{w}:K_{v}] $ is always true.
\end{rmk}

\begin{rmk}
If $K$ is complete with respect to $v$, then $v$ is well-behaved because there is exactly one extension of $v$ to $L$
and $L$ is complete with respect to that absolute value.
\end{rmk}

\begin{rmk}
Let $K$ be a field of characteristic zero.
Then every non-trivial absolute value on $K$ is well-behaved.
\begin{proof}
Let $v$ be a non-trivial absolute value on $K$.
Let $L$ be a finite extension of $K$.
Then the extensions of $v$ to $L$ correspond to the prime ideals of $L {\otimes}_{K} K_{v}$.
Since $K$ has characteristic zero, we have
\[
L {\otimes}_{K} K_{v} \simeq L_{w_{1}} \times \cdots \times L_{w_{r}}
\]
where $w_{1}, \dots, w_{r}$ are the extensions of $v$ to $L$.
Thus we get
\[
[L:K] = \dim_{K_{v}} L {\otimes}_{K} K_{v} = \sum_{i=1}^{r} [L_{w_{i}}: K_{v}].
\]
\end{proof}
\end{rmk}

\begin{lem}
Let $K$ be a field and $v$ a well-behaved absolute value on $K$.
Then for any finite extension $K \subset L$ and any extension $w$ of $v$ to $L$,
$w$ is also a well-behaved absolute value. 
\end{lem}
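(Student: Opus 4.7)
The plan is to prove the required equality $[M:L] = \sum_{u\mid w}[M_u:L_w]$ for an arbitrary finite extension $L \subset M$ by applying the well-behaved hypothesis on $v$ to the larger but still finite extension $K \subset M$, and then forcing the decomposition over $L$ to split evenly.

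First I would record the three basic identities and one inequality. Since $M/L$ and $L/K$ are finite, so is $M/K$; the well-behavedness of $v$ on $K$ therefore gives
\[
[M:K] = \sum_{u'\mid v}[M_{u'}:K_v], \qquad [L:K] = \sum_{w'\mid v}[L_{w'}:K_v].
\]
Grouping the extensions $u'$ of $v$ to $M$ according to their restriction $w'$ to $L$ and using the multiplicativity $[M_{u'}:K_v] = [M_{u'}:L_{w'}][L_{w'}:K_v]$ of local degrees in a tower of completions, the first display becomes
\[
[M:K] = \sum_{w'\mid v}[L_{w'}:K_v]\sum_{u'\mid w'}[M_{u'}:L_{w'}].
\]
The earlier remark supplies the always-true inequality $a_{w'} := \sum_{u'\mid w'}[M_{u'}:L_{w'}] \leq [M:L]$ for every $w'$.

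Next I would combine these ingredients with the tower law $[M:K] = [M:L]\,[L:K]$ and the well-behavedness of $v$ applied to $L$ to get
\[
[M:L]\sum_{w'\mid v}[L_{w'}:K_v] \;=\; \sum_{w'\mid v}[L_{w'}:K_v]\, a_{w'}.
\]
All coefficients $[L_{w'}:K_v]$ are strictly positive and each $a_{w'}\leq [M:L]$, so equality in this weighted sum forces $a_{w'} = [M:L]$ for every extension $w'$ of $v$ to $L$. Specializing to $w' = w$ yields exactly $\sum_{u\mid w}[M_u:L_w]=[M:L]$, which is well-behavedness of $w$.

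There is essentially no main obstacle beyond bookkeeping: the argument is pure bean-counting using the always-true inequality as a one-sided bound, together with the well-behavedness of $v$ applied both to $M/K$ (to compute the total) and to $L/K$ (to identify the weight $[L:K]$). The only step worth double-checking is the multiplicativity of local degrees in the completion tower $K_v \subset L_{w'} \subset M_{u'}$, which holds because $M_{u'}$ is a finite extension of $L_{w'}$ and of $K_v$.
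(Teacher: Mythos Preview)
Your proof is correct and follows essentially the same route as the paper: apply well-behavedness of $v$ to both $M/K$ and $L/K$, group the extensions of $v$ to $M$ by their restriction to $L$, and use the always-true inequality $\sum_{u\mid w'}[M_u:L_{w'}]\leq [M:L]$ together with positivity of the weights $[L_{w'}:K_v]$ to force equality termwise. The paper presents this as a single chain $[M:K]=\cdots\leq\cdots=[M:K]$ rather than isolating the weighted-sum step, but the content is identical.
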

\begin{proof}
Let $L'$ be a finite extension $L$.
Then we get
\begin{align*}
[L' : K] & = \sum_{\tiny \txt{ $u|v$\\ $u$ ab.\ val.\ on $L'$}}[L'_{u} : K_{v}] \qquad \text{since $v$ is well-behaved}\\[3mm]
& = \sum_{\tiny \txt{$w|v$\\ $w$ ab.\ val.\\on $L$}} \sum_{\tiny\txt{ $u|w$\\$u$ ab.\ val.\\ on $L'$}}[L'_{u}:L_{w}][L_{w}:K_{v}] \\[3mm]
& \leq [L':L] \sum_{\tiny \txt{$w|v$\\ $w$ ab.\ val.\\on $L$}}[L_{w}:K_{v}] = [L':L][L:K]=[L':K].
\end{align*}
Therefore, the inequality is actually equality and we are done.

\end{proof}

\begin{lem}\label{lem:nrm}
Let $K$ be a field and $v$ a well-behaved absolute value on $K$.
Let $K \subset L$ be a finite extension.
Then for any $ \alpha\in L$, we have
\[
\prod_{\tiny \txt{$w|v$, \\$w$ ab.\ val.\ on $L$}}| \alpha|_{w}^{[L_{w}:K_{v}]}=|N_{L/K}( \alpha)|_{v}.
\]
Here $N_{L/K}$ is the norm associated to the field extension.
\end{lem}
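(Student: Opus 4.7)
The plan is to express $|N_{L/K}(\alpha)|_{v}$ as a product of local norms by base-changing to $K_{v}$ and then invoking the standard formula relating $|\cdot|_{w}$ to $|N_{L_{w}/K_{v}}(\cdot)|_{v}$ on each completion.

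First, I would recall that $N_{L/K}(\alpha) = \det_{K}(m_{\alpha})$, where $m_{\alpha}\colon L\to L$ is multiplication by $\alpha$ viewed as a $K$-linear endomorphism, and note that determinants are preserved under base change, so
\[
N_{L/K}(\alpha) \;=\; \det_{K_{v}}\!\bigl(m_{\alpha}\otimes\id_{K_{v}}\colon L\otimes_{K}K_{v}\longrightarrow L\otimes_{K}K_{v}\bigr).
\]
The core step is then to show that the natural $K_{v}$-algebra homomorphism
\[
\phi\colon L\otimes_{K}K_{v}\longrightarrow \prod_{w\mid v}L_{w},\qquad \beta\otimes c\longmapsto (c\beta)_{w},
\]
is an isomorphism. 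The source is $[L:K]$-dimensional over $K_{v}$ and the target is $\sum_{w\mid v}[L_{w}:K_{v}]$-dimensional, and these agree precisely because $v$ is well-behaved. Hence it suffices to prove that $\phi$ is surjective: by the Artin--Whaples weak approximation theorem applied to the pairwise inequivalent absolute values $w\mid v$, the diagonal image of $L$ is dense in $\prod_{w\mid v}L_{w}$, while the image of $\phi$ is a $K_{v}$-subspace of the finite-dimensional $K_{v}$-vector space $\prod_{w\mid v}L_{w}$, and any such subspace is closed. A closed set containing a dense set is the whole space, so $\phi$ is onto.

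Under this isomorphism, $m_{\alpha}\otimes\id_{K_{v}}$ becomes the block-diagonal operator acting on each $L_{w}$ as multiplication by $\alpha$, so its $K_{v}$-determinant factors as $\prod_{w\mid v}N_{L_{w}/K_{v}}(\alpha)$. Finally, I would apply the local norm formula
\[
|N_{L_{w}/K_{v}}(\beta)|_{v} \;=\; |\beta|_{w}^{[L_{w}:K_{v}]},
\]
valid for any finite extension $K_{v}\subset L_{w}$ of complete valued fields: indeed, $\beta\mapsto |N_{L_{w}/K_{v}}(\beta)|_{v}^{1/[L_{w}:K_{v}]}$ is readily checked to be an absolute value on $L_{w}$ restricting to $|\cdot|_{v}$, so by uniqueness of the extension it coincides with $|\cdot|_{w}$. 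Taking absolute values in the determinant identity and multiplying over $w\mid v$ yields the claim.

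The main obstacle is the surjectivity of $\phi$, which bundles together weak approximation for inequivalent absolute values on $L$ and the topological fact that finite-dimensional subspaces over a complete valued field are closed; everything else is formal once the local norm formula is in hand.
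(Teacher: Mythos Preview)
Your proof is correct and follows essentially the same route as the paper: express the norm as a determinant, base-change to $K_{v}$, use the decomposition $L\otimes_{K}K_{v}\simeq\prod_{w\mid v}L_{w}$, and apply the local norm formula $|N_{L_{w}/K_{v}}(\alpha)|_{v}=|\alpha|_{w}^{[L_{w}:K_{v}]}$. The only difference is that the paper simply asserts the isomorphism $L\otimes_{K}K_{v}\simeq\prod_{w\mid v}L_{w}$ as a consequence of well-behavedness, whereas you supply the details via weak approximation and closedness of finite-dimensional subspaces; likewise the paper takes the local norm formula for granted while you sketch its justification via uniqueness of extensions over complete fields.
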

\begin{proof}
We have
\begin{align*}
N_{L/K}( \alpha) = {\det}_{K}(L \xrightarrow{ \alpha} L)={\det}_{K_{v}}(L {\otimes}_{K}K_{v} \xrightarrow{ \alpha {\otimes} 1} L {\otimes}_{K}K_{v} )
\end{align*}
where ${\det}_{K}$ (resp. ${\det}_{K_{v}}$) stand for determinant as $K$ (resp. $K_{v}$) linear maps. 
Since $v$ is well-behaved, we have
\[
L {\otimes}_{K} K_{v} \simeq \prod_{w|v} L_{w}.
\]
Thus we get
\begin{align*}
N_{L/K}( \alpha)=\prod_{w|v} {\det}_{K_{v}}(L_{w} \xrightarrow{ \alpha} L_{w}) = \prod_{w|v}N_{L_{w}/K_{v}}( \alpha).
\end{align*}
This shows
\[
|N_{L/K}( \alpha)|_{v} = \prod_{w|v}|N_{L_{w}/K_{v}}( \alpha)|_{v}=\prod_{w|v}| \alpha|_{w}^{[L_{w}:K_{v}]}.
\]

\end{proof}

\subsection{Proper set of absolute values}

\begin{defn}
Let $K$ be a field and $|\ |_{v}$ be an absolute value on $K$.
We say $  |\ |_{v}$ is proper if $  |\ |_{v}$ is non-trivial, well-behaved, and if $\ch K=0$,
then the restriction of $  |\ |_{v}$ to $\Q$ is equivalent to either trivial absolute value, $p$-adic absolute value,
or the usual absolute value.
\end{defn}

\begin{defn}\label{def:properset}
Let $K$ be a field.
A non-empty set $M_{K}$ of absolute values on $K$ is said to be proper if 
\begin{enumerate}
\item all element of $M_{K}$ is proper;
\item for all $  |\ |_{v},  |\ |_{w}\in M_{K}$, if $  |\ |_{v}\neq  |\ |_{w}$, then $  |\ |_{v}$ and $  |\ |_{w}$ are not equivalent;
\item for all $x\in K\setminus \{0\}$, $\sharp \left\{ v \in M_{K} \mid  |x|_{v}\neq1\right\}<\infty$.
\end{enumerate}
\end{defn}

\begin{rmk}
A proper set of absolute value $M_{K}$ contains only finitely many archimedean absolute values.
This is because the property (3) and archimedean absolute values restrict to the usual absolute value on $\Q$.
We denote the set of archimedean absolute values in $M_{K}$ by $M_{K}^{\infty}$.
The set of non-archimedean absolute values is denoted by $M_{K}^{\rm fin}$.
\end{rmk}

\begin{rmk}
Since finite fields do not have non-trivial absolute values, $K$ is infinite if it admits a proper set of absolute values.
\end{rmk}

\begin{defn}
Let $K$ be a field and $M_{K}$ be a proper set of absolute values on $K$.
For a field extension $K\subset L$, we set
\[
M_{K}(L) = M(L) =\left\{  |\ |_{w} \middle| \txt{$  |\ |_{w}$ is an absolute value on $L$\\ whose restriction to $K$ is an element of $M_{K}$}\right\}.
\]
Strictly speaking, this set depends on $M_{K}$ and the embedding $K \to L$.
But we usually fix $K$, $M_{K}$, and an algebraic closure $ \overline{K}$ of $K$ and work inside $ \overline{K}$, 
so there would not be any confusion.
\end{defn}

\begin{lem}
Let $K$ be a field and $M_{K}$ be a proper set of absolute values on $K$.
Let $K \subset L$ be a finite extension.
Then $M(L)$ is a proper set of absolute values on $L$.
\end{lem}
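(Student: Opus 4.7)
The plan is to verify the three defining conditions of a proper set of absolute values (Definition~\ref{def:properset}) in turn for $M(L)$, after first observing that $M(L)$ is non-empty: each $v \in M_K$ admits at least one extension to $L$ via the primes of the finite-dimensional $K_v$-algebra $L \otimes_K K_v$.

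Conditions (1) and (2) should be essentially bookkeeping inherited from the analogous properties of $M_K$. For (1), I would fix $w \in M(L)$ above some $v \in M_K$ and check the three parts of the definition of ``proper'' one by one: non-triviality of $w$ is forced by non-triviality of $v$; well-behavedness is exactly the preceding lemma, which says that extensions of well-behaved absolute values are well-behaved; and in characteristic zero the restriction of $w$ to $\Q$ coincides with the restriction of $v$ to $\Q$, so is trivial, $p$-adic, or the usual absolute value by hypothesis on $M_K$. For (2), I would use the standard classification that two non-trivial absolute values on a field are equivalent if and only if one is a positive real power of the other: if two distinct $w_1, w_2 \in M(L)$ restricted to the same $v \in M_K$, then $v = v^s$ with $v$ non-trivial would force $s = 1$ and thus $w_1 = w_2$; and if they restrict to distinct elements of $M_K$, condition (2) for $M_K$ already rules out equivalence on the subfield $K$, a fortiori on $L$.

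Condition (3) is the main obstacle. The naive attempt via the norm formula of Lemma~\ref{lem:nrm} only controls the product $\prod_{w \mid v}|x|_w^{[L_w:K_v]}$, so even when $|N_{L/K}(x)|_v = 1$ the individual factors $|x|_w$ need not equal $1$. My plan is instead to exploit the minimal polynomial $P(T) = T^n + a_{n-1}T^{n-1} + \cdots + a_0 \in K[T]$ of $x$, whose constant term $a_0$ is nonzero because $x \neq 0$. Let $S \subset M_K$ be the finite set consisting of $M_K^{\infty}$ together with all $v$ for which some $|a_i|_v \neq 1$; its finiteness is immediate from condition (3) for $M_K$ applied to each nonzero $a_i$. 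The key claim is that for every $v \in M_K \setminus S$ and every extension $w \mid v$ one has $|x|_w = 1$: such a $v$ is non-archimedean with all $|a_i|_v \leq 1$ and $|a_0|_v = 1$, so the ultrametric inequality applied to $P(x) = 0$ forces $|x|_w \leq 1$, while applying it to the monic polynomial satisfied by $x^{-1}$ (obtained by reversing the coefficients of $P$ and normalizing by $a_0$, which has absolute value $1$) forces $|x|_w \geq 1$. Since each $v \in S$ has only finitely many extensions to $L$ (bounded by $[L:K]$), the set $\{w \in M(L) : |x|_w \neq 1\}$ is finite, establishing condition (3).
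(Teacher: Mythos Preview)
Your proposal is correct, and for condition~(3) you take a genuinely different route from the paper. The paper also starts from the minimal polynomial and the same finite set $S$, but instead of bounding $|x|_w$ and $|x^{-1}|_w$ separately via the ultrametric inequality, it argues as follows: integrality of $\alpha$ over $\O_v$ gives $|N_{L_w/K_v}(\alpha)|_v \leq 1$ for every $w\mid v$, and then the factorization $\prod_{w\mid v}|N_{L_w/K_v}(\alpha)|_v = |N_{L/K}(\alpha)|_v = 1$ (from \cref{lem:nrm}) forces each factor to equal~$1$, whence $|\alpha|_w = |N_{L_w/K_v}(\alpha)|_v^{1/[L_w:K_v]} = 1$. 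So the norm approach you set aside as insufficient does in fact work once one supplements it with the integrality bound; conversely, your argument via the reversed polynomial for $x^{-1}$ is more self-contained and avoids invoking well-behavedness a second time through the norm formula. Both are short and standard. One small wording issue: your set $S$ should be defined using only the \emph{nonzero} coefficients $a_i$ (as your finiteness justification already indicates), since a zero coefficient would otherwise place every $v$ in $S$.
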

\begin{proof}
The only non-trivial thing is property (3) in \cref{def:properset}.
Let $ \alpha \in L\setminus \{0\}$.
We will show that $\sharp\{ w \in M(L) \mid | \alpha|_{w}\neq 1\}<\infty$. 
Let $f(X)=X^{n}+a_{n-1}X^{n-1}+\cdots + a_{0}$ be the minimal monic polynomial of $ \alpha$ over $K$.
Since $M_{K}$ is proper, there is a finite set $S \subset M_{K}$ containing $M_{K}^{\infty}$ such that
\begin{align*}
&|a_{i}|_{v}=1\ \text{or}\ 0\ \text{for $i=0, \dots, n-1$}\\
&|N_{L/K}( \alpha)|_{v}=1
\end{align*} 
for all $v\in M_{K}\setminus S$.
For any $v\in M_{K}\setminus S$, let $w \in M(L)$ be such that $w|v$.
Then $ \alpha$ (as an element of $L_{w}$) is integral over $\O_{v}=\{x\in K_{v} \mid |x|_{v}\leq 1\}$.
Thus $|N_{L_{w}/K_{v}}( \alpha)|_{v}\leq 1$.
By the proof of \cref{lem:nrm} and $|N_{L/K}( \alpha)|_{v}=1$, we get $|N_{L_{w}/K_{v}}( \alpha)|_{v} =1$ for all $w \in M(L)$
such that $w|v$.
Thus $| \alpha|_{w}=|N_{L_{w}/K_{v}}( \alpha)|_{v}^{1/[L_{w}:K_{v}]}=1$ for all $w \in M(L)$ such that $w|_{K}\in M_{K}\setminus S$.
\end{proof}

\section{Local heights}
In this section, we define local height functions associated to closed subschemes.
We will do this in several steps.

\subsection{Presentation of closed subschemes}
In this section, we work over an infinite field $k$.

\begin{lem}\label{lem:existpts}
Let $U\subset \A^{N}_{k}$ be a non-empty Zariski open subset.
Then $U(k) \neq \emptyset$.
\end{lem}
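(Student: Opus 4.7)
The plan is to reduce the statement to the familiar fact that a non-zero polynomial over an infinite field cannot vanish at every $k$-rational point of $\A^{N}_{k}$, and then prove that by induction on $N$.

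First I would unwind what it means for $U$ to be non-empty. Writing $U = \A^{N}_{k} \setminus V(I)$ for an ideal $I \subset k[x_{1}, \dots, x_{N}]$, non-emptiness of $U$ gives $V(I) \neq \A^{N}_{k}$, which forces $I \neq 0$ because the polynomial ring $k[x_{1}, \dots, x_{N}]$ is a domain and hence has zero nilradical. Thus there exists a non-zero polynomial $f \in I$, and any $(a_{1}, \dots, a_{N}) \in k^{N}$ with $f(a_{1}, \dots, a_{N}) \neq 0$ corresponds to a maximal ideal $(x_{1} - a_{1}, \dots, x_{N} - a_{N})$ with residue field $k$ which lies in $U$. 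So it suffices to show: for any non-zero $f \in k[x_{1}, \dots, x_{N}]$ there exists $(a_{1}, \dots, a_{N}) \in k^{N}$ with $f(a_{1}, \dots, a_{N}) \neq 0$.

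I would prove this by induction on $N$. For $N = 1$, a non-zero polynomial in one variable has at most $\deg f$ roots, and since $k$ is infinite, some element of $k$ is not a root. For the inductive step, I would expand $f$ as a polynomial in $x_{N}$,
\[
f = \sum_{i=0}^{d} g_{i}(x_{1}, \dots, x_{N-1}) \, x_{N}^{i},
\]
with $g_{d} \neq 0$. By the inductive hypothesis there exist $a_{1}, \dots, a_{N-1} \in k$ with $g_{d}(a_{1}, \dots, a_{N-1}) \neq 0$, so $f(a_{1}, \dots, a_{N-1}, x_{N})$ is a non-zero polynomial in $x_{N}$; by the base case there exists $a_{N} \in k$ where it does not vanish.

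There is no real obstacle here: the argument is routine and the only place where the hypothesis that $k$ is infinite is used is the base case of the induction. The reduction step using $I \neq 0$ is what makes this work for arbitrary non-empty Zariski opens rather than only basic opens.
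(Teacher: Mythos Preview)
Your proof is correct and follows essentially the same approach as the paper: reduce to finding a $k$-point where a single non-zero polynomial does not vanish, and then prove that by induction on $N$ using that $k$ is infinite. The paper's sketch phrases the reduction as passing to a basic open $D(f)\subset U$, which is equivalent to your choice of a non-zero $f\in I$, and leaves the induction implicit whereas you spell it out.
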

\begin{proof}
(sketch).

We may assume $U=D(f)(=\A^{N}_{k}\setminus (f=0))$, where $f \in k[x_{1},\dots,x_{N}]$ is a non zero polynomial.
It is enough to show that for any non zero polynomial $f \in k[x_{1},\dots,x_{N}]$, there is a point $a \in k^{N}$ such that $f(a) \neq 0$.
This can be shown by induction on $N$.
For $N=1$ case and proceeding induction, we need the assumption that $k$ is infinite.
\end{proof}

For an effective Cartier divisor $D$ on $X$, let $\O_{X}(D)= \sHom_{\O_{X}}(\I_{D}, \O_{X})$
where $\I_{D}$ is the ideal sheaf of $D$.
The global section of $\O_{X}(D)$ corresponding the inclusion $\I_{D} \to \O_{X}$ is denoted by $s_{D}$.

\begin{lem}\label{lem:existpres}
Let $X$ be a quasi-projective scheme over $k$.
\begin{enumerate}
\item For any effective Cartier divisor $D$ on $X$, there are globally generated invertible $ \O_{X}$-modules $ \mathcal{L}, \mathcal{M}$ such that 
$ \O_{X}(D) \simeq \mathcal{L} {\otimes} \mathcal{M}^{-1}$.

\item Let $ \mathcal{L}$ be a globally generated invertible $ \O_{X}$-module on $X$ and $x_{1},\dots x_{n} \in X $ be scheme points.
Then there are generating global sections $s_{0}, \dots, s_{n}$ of $ \mathcal{L}$ such that non of them vanish at any of $x_{i}$.

\item Let $Y \subset X$ be a closed subscheme such that $Y \cap \Ass(X)=\emptyset$.
Let $x_{1},\dots x_{n} \in X \setminus Y$ be scheme points.
Then there are effective Cartier divisors $D_{1}, \dots, D_{r}$ on $X$ such that
\begin{align*}
&Y=D_{1}\cap \cdots \cap D_{r} \qquad \text{as closed subschemes of $X$};\\
& x_{1}, \dots, x_{n} \notin D_{i} \qquad \text{for $i=1, \dots, r$}.
\end{align*}

\end{enumerate}
\end{lem}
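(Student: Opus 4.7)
The three parts share a common technical core: the infiniteness of $k$ (through \cref{lem:existpts}, which guarantees $k$-points in the complement of a proper closed subset of affine space) together with Serre's theorem on global generation of a sufficiently positive twist of a coherent sheaf on a projective scheme. Parts (2) and (3) both follow the same template: produce one ``good'' section avoiding finitely many bad points via a generic linear combination, then perturb each member of a pre-existing generating set by a suitable multiple of it. Part (3) additionally invokes (1) applied to the ideal sheaf $\I_{Y}$ and uses $Y \cap \Ass(X) = \emptyset$ to upgrade the zero schemes produced this way to genuine effective Cartier divisors.

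For (1), fix an open immersion $X \hookrightarrow \overline{X}$ with $\overline{X}$ projective and a closed immersion $\overline{X} \hookrightarrow \P^{N_{0}}_{k}$; restricting the standard coordinate sections shows that $\O_{X}(n) := \O_{\P^{N_{0}}_{k}}(n)|_{X}$ is globally generated for every $n \geq 0$. Any coherent sheaf on the open subscheme $X \subset \overline{X}$ extends to a coherent sheaf on $\overline{X}$, so $\O_{X}(D)$ is the restriction of some coherent $\mathcal{F}$ on $\overline{X}$. Serre's theorem produces $n$ with $\mathcal{F} \otimes \O_{\overline{X}}(n)$ globally generated on $\overline{X}$; restricting to $X$ gives global generation of $\O_{X}(D) \otimes \O_{X}(n)$. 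Set $\mathcal{L} := \O_{X}(D) \otimes \O_{X}(n)$ and $\mathcal{M} := \O_{X}(n)$.

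For (2), start from any finite generating set $s'_{0}, \ldots, s'_{m}$ of $\mathcal{L}$. At each $x_{i}$, the evaluation map $k^{m+1} \to \mathcal{L} \otimes \kappa(x_{i})$, $(a_{j}) \mapsto \sum_{j} a_{j} s'_{j}(x_{i})$, is nonzero by global generation, hence its kernel is a proper $k$-linear subspace of $k^{m+1}$. The union over $i$ is a proper closed subset of $\A^{m+1}_{k}$; \cref{lem:existpts} supplies a $k$-point outside it, i.e.\ a section $t$ of $\mathcal{L}$ with $t(x_{i}) \neq 0$ for every $i$. Since each $t(x_{i}) \neq 0$, for each $j$ every index $i$ excludes at most one $\lambda_{j} \in k$ (the $k$-solution, if any, of $s'_{j}(x_{i}) + \lambda_{j} t(x_{i}) = 0$), so by infiniteness of $k$ one can choose $\lambda_{j}$ with $s'_{j}(x_{i}) + \lambda_{j} t(x_{i}) \neq 0$ for all $i$. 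Take $s_{0} := t$ and $s_{k+1} := s'_{k} + \lambda_{k} t$ for $k = 0, \ldots, m$. No $s_{k}$ vanishes at any $x_{i}$, and the collection generates $\mathcal{L}$: at a point where $t \neq 0$, $s_{0}$ generates; at a point where $t = 0$, each $s_{k+1}$ coincides with $s'_{k}$, so the original generation is inherited.

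For (3), set $P := \Ass(X) \cup \{x_{1}, \ldots, x_{n}\}$; by hypothesis $P \cap Y = \emptyset$, so $\I_{Y,p} = \O_{X,p}$ at every $p \in P$. Applying the extension/Serre argument of (1) to $\I_{Y}$ in place of $\O_{X}(D)$, choose an integer $N$ and global sections $t'_{0}, \ldots, t'_{m}$ of $\I_{Y} \otimes \O_{X}(N)$ that generate it. Repeating the ``generic combination plus perturbation'' argument of (2) with the bad-point set $\{x_{i}\}$ enlarged to $P$, construct generators $t_{0}, \ldots, t_{m+1}$ of $\I_{Y} \otimes \O_{X}(N)$ none of which vanish at any point of $P$. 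Each $t_{k}$ is then nonzero at every associated point of $X$, hence locally a non-zero-divisor in $\O_{X}(N)$; so its zero scheme $D_{k}$ is an effective Cartier divisor on $X$ with $\I_{D_{k}} \subset \I_{Y}$ and $x_{i} \notin D_{k}$. After locally trivializing $\O_{X}(N)$, the global generation of $\I_{Y} \otimes \O_{X}(N)$ by $t_{0}, \ldots, t_{m+1}$ becomes the ideal-sheaf equality $\I_{Y} = \I_{D_{0}} + \cdots + \I_{D_{m+1}}$, so $Y = D_{0} \cap \cdots \cap D_{m+1}$ as subschemes. The crux is precisely this last step: $Y \cap \Ass(X) = \emptyset$ is what makes $\I_{Y}$ locally free of rank one near every $p \in P$ (so the argument of (2) applies verbatim) and what converts non-vanishing at associated points into the local non-zero-divisor property needed to obtain honest Cartier divisors and an exact (not merely set-theoretic) scheme-theoretic intersection.
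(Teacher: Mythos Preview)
Your proof is correct. For (1) it coincides with the paper's (you are just more explicit about passing to a projective closure before invoking Serre). For (2) and (3) you take a genuinely different route.

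For (2), the paper composes with the morphism to $\P^N$ determined by a generating set, thereby reducing to $X=\P^N$, $\mathcal{L}=\O(1)$, and then observes that ``not vanishing at the $x_i$'' and ``generating'' are nonempty Zariski-open conditions on $H^0(\O(1))$ and $H^0(\O(1))^{N+1}$ respectively, so \cref{lem:existpts} gives a point in the intersection. You instead stay on $X$ and argue constructively: produce one good section $t$ by a generic $k$-linear combination, then shear each original generator by a multiple of $t$. Both arguments are short; yours has the advantage that it transports verbatim to (3).

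For (3), the paper passes to the projective case, specializes the $x_i$ to closed points, and uses Serre vanishing of $H^1(\mathfrak m_{x_1}\cdots\mathfrak m_{x_n}\otimes\I\otimes\O_X(m))$ to obtain a surjection $H^0(\I\otimes\O_X(m))\twoheadrightarrow\bigoplus_i k(x_i)$; it then lifts a basis of $\bigoplus_i k(x_i)$ with all coordinates nonzero and adjusts the remaining generators accordingly. Your argument avoids the cohomological step entirely: you simply rerun the perturbation argument of (2) on the globally generated sheaf $\I_Y\otimes\O_X(N)$, which is legitimate because at every point of $P=\Ass(X)\cup\{x_1,\dots,x_n\}$ the stalk is free of rank one. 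This also makes explicit two points the paper leaves implicit: that one must include $\Ass(X)$ among the avoided points in order to upgrade ``locally principal'' to ``effective Cartier'', and that the argument already works on the quasi-projective $X$ without first reducing to a projective Claim. The trade-off is that the paper's $H^1$-vanishing technique is a standard tool worth seeing, while your approach is more self-contained and reuses (2) directly.
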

\begin{proof}
(1) Since $X$ is quasi-projective, there is an ample invertible $\O_{X}$-module $\O_{X}(1)$.
We will write $\O_{X}(m)=\O_{X}(1)^{\otimes m}$.
For large enough $m>0$, $\O_{X}(m)$ and $\O_{X}(D) {\otimes} \O_{X}(m)$ are globally generated, and we are done.

(2)
Take a morphism to a projective space $\varphi \colon X \longrightarrow \P^{N}_{k}$ 
defined by a generating global sections of $ \mathcal{L}$.
Replace $X$, $ \mathcal{L}$, and $x_{1},\dots x_{n}$ with $\P^{N}_{k}$, $\O(1)$, and $\varphi(x_{1}),\dots , \varphi(x_{n})$,
we may assume $X=\P^{N}_{k}$ and $ \mathcal{L}=\O(1)$.
Non vanishing at $x_{1},\dots ,x_{n}$ is a Zariski open condition on $H^{0}(\O(1))=k^{N+1}$: let $U \subset H^{0}(\O(1))$ be the open set.
Generating $\O(1)$ is a Zariski open condition for $N+1$-tuples of elements of $H^{0}(\O(1))$: let $V \subset H^{0}(\O(1))^{N+1}$ be the open set.
Then by \cref{lem:existpts}, $U^{N+1}\cap V \neq \emptyset$ and we are done.

(3)
It follows from the following claim.
\begin{claim}
Let $X$ be a projective scheme and $Y \subset X$  a closed subscheme.
Let $x_{1},\dots, x_{n}\in X\setminus Y$ be scheme points.
Then there are locally principal closed subschemes $D_{1}, \dots, D_{r}$ of $X$ such that
$Y=D_{1}\cap \cdots \cap D_{r}$ as schemes and $x_{i} \notin D_{j}$ for all $i,j$.
\end{claim}
\begin{claimproof}
Let $ \mathcal{I}$ be the ideal of $Y \subset X$.
By replacing some specialization points, we may assume $x_{i}$ are closed points of $X$.
Let $\m_{x_{i}}$ be the ideal of the closed points $x_{i}$.
Let $\O_{X}(1)$ be an ample $\O_{X}$-module.
 
Let us consider the following exact sequence:
\[
0 \longrightarrow \m_{x_{1}}\cdots \m_{x_{n}} \longrightarrow \O_{X} \longrightarrow \bigoplus_{i=1}^{n}k(x_{i}) \longrightarrow0.
\]
Since $Y \cap\{x_{1},\dots, x_{n}\}=\emptyset$, we can tensor $\I$ preserving exactness:
\[
0 \longrightarrow \m_{x_{1}}\cdots \m_{x_{n}} \otimes \I \longrightarrow \I \longrightarrow \bigoplus_{i=1}^{n}k(x_{i}) \longrightarrow0.
\]
Let us take $m>0$ so that $\I \otimes \O_{X}(m)$ is globally generated and
$H^{1}(X, \m_{x_{1}}\cdots \m_{x_{n}} \otimes \I \otimes \O_{X}(m))=0$.
Then we get a surjection:
\[
H^{0}(X, \I \otimes \O_{X}(m)) \longrightarrow \bigoplus_{i=1}^{n}k(x_{i}).
\]
Take $k$-bases of $k(x_{i})$ and fix a $k$-vector isomorphism $\bigoplus_{i=1}^{n}k(x_{i}) \simeq k^{d}$.
Write the surjective $k$-linear map defined by the above map $\pi \colon H^{0}(X, \I \otimes \O_{X}(m)) \longrightarrow k^{d}$.
Since$k$ is infinite, by \cref{lem:existpts}, there is a $k$-basis $e_{1},\dots ,e_{d}$ of $k^{d}$ whose all coordinates are not zero.
Let $s_{1},\dots, s_{d} \in H^{0}(X, \I \otimes \O_{X}(m))$ be lifts of $e_{i}$: $\pi(s_{i})=e_{i}$.
Extend this to generating sections $s_{1},\dots, s_{d},s'_{d+1},\dots, s'_{r}$.
Since $e_{1},\dots, e_{d}$ is a $k$-basis of $k^{d}$, there are $a_{ij}\in k$ such that
all coordinates of
\[
\pi(s'_{i}+\sum_{j=1}^{d}a_{ij}s_{j}) \quad i=d+1,\dots,r
\]
are not zero.
Set $s_{i}=s'_{i}+\sum_{j=1}^{d}a_{ij}s_{j}$ for $ i=d+1,\dots,r$.
These $s_{1},\dots ,s_{r}$ as elements of $H^{0}(X, \O_{X}(m))$ define locally principal closed subschemes $D_{1},\dots, D_{r}$ of $X$.
By the construction, $Y=D_{1}\cap \cdots \cap D_{r}$ and $x_{i}\notin D_{j}$.
\end{claimproof}

\end{proof}

\begin{defn}[\bf Presentations of closed subschemes]
Let $X$ be a quasi-projective scheme over $k$.
\begin{enumerate}
\item
Let $ \mathcal{L}$ be an invertible $\O_{X}$-module. A global section $s \in H^{0}(X, \mathcal{L})$ is called a regular global section 
if the homomorphism $\O_{X} \longrightarrow \mathcal{L}$ defined by $s$ is injective.
Note that this is equivalent to say that $s$ does not vanish at any of associated points of $X$.
\item
Let $D$ be an effective Cartier divisor on $X$. A presentation of $D$ is
\[
\D = (s_{D}; \L, s_{0}, \dots, s_{n};\M, t_{0},\dots, t_{m}; \psi)
\]
where
$\L$, $\M$ are globally generated invertible $\O_{X}$-modules, $\psi \colon \L \otimes \M^{-1} \to \O_{X}(D) $ is an isomorphism,
and $s_{0},\dots, s_{n} \in H^{0}(X, \L)$ and $t_{0},\dots, t_{m} \in H^{0}(X, \M)$ are regular global sections which generate $\L$ and $\M$.
We usually omit $\psi$ and write 
\[
\D = (s_{D}; \L, s_{0}, \dots, s_{n};\M, t_{0},\dots, t_{m})
\]
for simplicity.
\item
Let $Y\subset X$ be a closed subscheme such that $Y\cap \Ass(X)=\emptyset$.
A presentation of $Y$ is 
\[
\Y =(Y; \D_{1},\dots, \D_{r})
\]
where $\D_{i}$ are presentations of effective Cartier divisors $D_{i}$ on $X$ such that
$Y=D_{1}\cap \cdots, \cap D_{r}$ as closed subschemes.
\end{enumerate}
\end{defn}

\begin{rmk}
By \cref{lem:existpres}, those presentations are always exist.
\end{rmk}

\begin{defn}
Let $X$ be a quasi-projective scheme over $k$.
Let $D, E$ be two effective Cartier divisors on $X$ and 
\begin{align*}
&\D = (s_{D}; \L, s_{0}, \dots, s_{n};\M, t_{0},\dots, t_{m}; \psi);\\
&\E = (s_{E}; \L', s'_{0}, \dots, s'_{n'};\M', t'_{0},\dots, t'_{m'}; \psi')
\end{align*}
presentations of them.
\begin{enumerate}
\item
We define the sum of $\D$ and $\E$ as
\begin{align*}
&\D+\E = \\
& \Bigl( s_{D+E};  \L \otimes \L', \{s_{i} \otimes s'_{j}\}_{\tiny \txt{$0\leq i \leq n$\\  $0\leq j \leq n'$}} 
; \M\otimes \M', \{t_{i}\otimes t'_{j}\}_{\tiny \txt{$0\leq i \leq m$\\$0\leq j \leq m'$}} ; \psi \otimes \psi' \Bigr).
\end{align*}
Note that this is a presentation of $D+E$.
\item
Let $f \colon X' \longrightarrow X$ be a morphism over $k$ where $X'$ is a quasi-projective scheme.
We say $f^{*}\D$ is well-defined if $f(\Ass(X'))\cap D=\emptyset$ and $s_{0},\dots ,s_{n}$, $t_{0},\dots, t_{m}$ do not vanish at any points of $f(\Ass(X'))$.
In this case, the scheme theoretic inverse image $f^{-1}(D)$ is an effective Cartier divisor on $X'$, which is denoted by $f^{*}D$, and we define 
\[
f^{*} \D = (s_{f^{*}D} ; f^{*}\L, f^{*}s_{0},\dots, f^{*}s_{n}; f^{*}\M, f^{*}t_{0},\dots, f^{*}t_{m} ; f^{*}\psi).
\]
This is a presentation of $f^{*}D$.
\end{enumerate}
\end{defn}

\begin{defn}
Let $X$ be a quasi-projective scheme over $k$.
Let $Y, W \subset X$ be closed subschemes such that $Y\cap \Ass(X)=W\cap \Ass(X)=\emptyset$.
Let $\I_{Y}, \I_{W}$ be the ideals of $Y, W$.
The closed subschemes defined by $\I_{Y}+\I_{W}$ and $\I_{Y}\I_{W}$ are denoted by
$Y\cap W$ and $Y+W$ respectively.
Note that $(Y\cap W)\cap \Ass(X)=\emptyset$ and $(Y+W)\cap \Ass(X)=\emptyset$.

Let 
\begin{align*}
&\Y = (Y; \D_{1}, \dots, \D_{r});\\
&\W = (W; \E_{1},\dots , \E_{s})
\end{align*}
be presentations of $Y, W$.

\begin{enumerate}
\item
We define 
\[
\Y \cap \W = (Y\cap W; \D_{1},\dots ,\D_{r}, \E_{1},\dots, \E_{s}).
\]
This is a presentation of $Y\cap W$.
\item
We define 
\[
\Y+\W = (Y+W; \{\D_{i}+\E_{j}\}_{1\leq i \leq r, 1\leq j \leq s}).
\]
This is a presentation of $Y+W$.
\item
Let $f \colon X' \longrightarrow X$ be a morphism over $k$ where $X'$ is a quasi-projective scheme.
Suppose $f^{*}\D_{i}$ are well-defined for all $i=1,\dots ,r$. (In this case, we say $f^{*}\Y$ is well-defined.)
Then $f^{-1}(Y)\cap \Ass(X')=\emptyset$.
We define 
\[
f^{*}\Y = (f^{-1}(Y); f^{*}\D_{1},\dots, f^{*}\D_{r}).
\]
This is a presentation of $f^{-1}(Y)$.
\end{enumerate}

\end{defn}

\begin{rmk}
Let $f \colon X' \longrightarrow X$ be a morphism over $k$ between quasi-projective schemes over $k$.
Let $Y\subset X$ be a closed subscheme such that $Y\cap (\Ass(X)\cup f(\Ass(X')))=\emptyset$.
Then by \cref{lem:existpres}, there is a presentation $\Y$ of $Y$ such that $f^{*}\Y$ is well-defined.
\end{rmk}

\begin{lem}\label{lem:prefldext}
Let $k \subset k'$ be a field extension.
Let $X$ be a quasi-projective scheme over $k$ and $Y\subset X$ be a closed subscheme such that $Y\cap \Ass(X)=\emptyset$.
Let $\Y = (Y; \D_{1},\dots, \D_{r})$ be a presentation of $Y$ where
\[
\D_{i}=(s_{D_{i}}; \L^{(i)}, s_{0}^{(i)}, \dots s_{n_{i}}^{(i)}; \M^{(i)},t_{0}^{(i)},\dots,t_{m_{i}}^{(i)}).
\]
Then the base change $Y_{k'} \subset X_{k'}$ also satisfies $Y_{k'}\cap \Ass(X_{k'})=\emptyset$ and
\begin{align*}
&\Y_{k'}:=(Y_{k'}; (\D_{1})_{k'},\dots (\D_{r})_{k'})
\end{align*}
is a presentation of $Y_{k'}$ where
\begin{align*}
(\D_{i})_{k'}=(s_{(D_{i})_{k'}}; \L_{k'}^{(i)}, (s_{0}^{(i)})_{k'}, \dots (s_{n_{i}}^{(i)})_{k'}; \M^{(i)}_{k'},(t_{0}^{(i)})_{k'},\dots,(t_{m_{i}}^{(i)})_{k'}).
\end{align*}
\end{lem}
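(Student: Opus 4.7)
The plan is to leverage flatness of the base change morphism $\pi\colon X_{k'}\to X$, which is flat since $k'$ is a free (hence flat) $k$-module. Under flatness, invertibility of sheaves, injectivity of module maps, and inclusion of ideal sheaves are all preserved by $\pi^{*}$, and this is what allows every piece of the data of a presentation to transport cleanly. The claim splits into three assertions: (a) $Y_{k'}\cap\Ass(X_{k'})=\emptyset$; (b) each $(\D_{i})_{k'}$ is a presentation of an effective Cartier divisor $(D_{i})_{k'}$ on $X_{k'}$; and (c) $Y_{k'}=(D_{1})_{k'}\cap\cdots\cap(D_{r})_{k'}$ as closed subschemes.

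For (a), I would use the standard fact that under a flat morphism $\pi\colon X'\to X$, every associated point $x'\in\Ass(X')$ lies over an associated point $\pi(x')\in\Ass(X)$. Since $Y\cap\Ass(X)=\emptyset$ and $Y_{k'}=\pi^{-1}(Y)$, this forces $Y_{k'}\cap\Ass(X_{k'})=\emptyset$.

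For (b), the ideal sheaf of $(D_{i})_{k'}$ is the pullback $(\I_{D_{i}})_{k'}$, which remains invertible because tensoring an invertible module with an algebra yields an invertible module; hence $(D_{i})_{k'}$ is an effective Cartier divisor, and the canonical identification $\O_{X_{k'}}((D_{i})_{k'})\simeq \O_{X}(D_{i})_{k'}$ sends $s_{(D_{i})_{k'}}$ to $(s_{D_{i}})_{k'}$. The base change $\psi_{k'}$ then yields the required isomorphism $\L^{(i)}_{k'}\otimes(\M^{(i)}_{k'})^{-1}\xrightarrow{\sim}\O_{X_{k'}}((D_{i})_{k'})$. Regularity of a section $s$ of $\L^{(i)}$ means that $\O_{X}\to \L^{(i)}$ is injective, and flat pullback preserves this; similarly, the $s^{(i)}_{j}$ generate $\L^{(i)}$ exactly when $\O_{X}^{n_{i}+1}\twoheadrightarrow \L^{(i)}$ is surjective, and surjectivity is preserved by any pullback. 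The same argument applies to the $t^{(i)}_{j}$ and $\M^{(i)}$, so $(\D_{i})_{k'}$ satisfies every clause of the definition of a presentation.

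For (c), the ideal sheaf of $Y$ is $\I_{Y}=\I_{D_{1}}+\cdots+\I_{D_{r}}$, and flatness of $\pi^{*}$ ensures both that each $(\I_{D_{i}})_{k'}$ remains a subsheaf of $\O_{X_{k'}}$ and that $\pi^{*}$ commutes with the formation of the sum (as the image of $\bigoplus_{i}\I_{D_{i}}\to\O_{X}$), giving $\I_{Y_{k'}}=\I_{(D_{1})_{k'}}+\cdots+\I_{(D_{r})_{k'}}$, that is, $Y_{k'}=(D_{1})_{k'}\cap\cdots\cap(D_{r})_{k'}$. The main obstacle is step (a): for an arbitrary field extension $k'/k$ the scheme $X_{k'}$ need not be Noetherian, so either one invokes a version of the associated-points-under-flat-base-change statement valid in the non-Noetherian setting, or one reduces to a finitely generated subextension $k\subset k''\subset k'$ over which $X_{k''}$ is of finite type, applies the classical Noetherian result, and then passes to $k'$ by observing that every associated point of $X_{k'}$ maps to an associated point of $X_{k''}$ for some such intermediate $k''$.
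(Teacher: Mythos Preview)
Your argument is correct and matches the paper's approach exactly: the paper's proof consists of the single line ``Since $X_{k'} \longrightarrow X$ is flat, $\Ass(X_{k'})$ is mapped to $\Ass(X)$. The rest is obvious,'' and your proposal is a careful unpacking of precisely this, with (a) being the flatness step and (b), (c) spelling out what the paper calls obvious.

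One small point: your worry about non-Noetherianness in step (a) is unnecessary here. Since $X$ is quasi-projective over $k$, the base change $X_{k'}$ is quasi-projective over $k'$, hence of finite type over a field, hence Noetherian; so the classical statement that associated points map to associated points under a flat morphism applies directly, with no need for a reduction to finitely generated subextensions.
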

\begin{proof}
Since $X_{k'} \longrightarrow X$ is flat, $\Ass(X')$ is mapped to $\Ass(X)$.
The rest is obvious.
\end{proof}

\begin{cons}[Meromorphic functions associated to presentations]\label{mfunc}\

Let $X$ be an algebraic scheme and $\L$ be an invertible $\O_{X}$-module. 
For a regular section $s \in H^{0}(X, \L)$, we define a section $s^{-1} \in H^{0}(U, \L^{-1})$
where $U=X\setminus \di(s)$ as follows.
Since 
\[
s \colon \O_{U} \longrightarrow \L|_{U}
\]
is an isomorphism, the homomorphism
\[
s^{\vee} \colon \L^{-1}:=\sHom_{\O_{X}}(\L, \O_{X}) \longrightarrow \sHom_{\O_{X}}(\O_{X},\O_{X})
\] 
is also an isomorphism on $U$.
We define $s^{-1}=(s^{\vee})^{-1}(\id_{\O_{U}}) \in H^{0}(U, \L^{-1})$.
Note that $U$ is dense open in $X$ since $s$ is a regular section.

Let $X$ be a quasi-projective scheme over $k$ and $D$ be an effective Cartier divisor on $X$.
Take a presentation of $D$:
\[
\D = (s_{D}; \L, s_{0}, \dots, s_{n};\M, t_{0},\dots, t_{m}; \psi).
\]
Then we have the following isomorphism:
\[
\Psi \colon \L \otimes \M^{-1} \otimes \O_{X}(D)^{-1} \xrightarrow{\psi \otimes \id} \O_{X}(D) \otimes \O_{X}(D)^{-1} \simeq \O_{X}
\]
where the last isomorphism is the canonical isomorphism.
We write
\[
\frac{s_{i}}{s_{D}t_{j}} := \Psi(s_{i}\otimes t_{j}^{-1} \otimes s_{D}^{-1}) \in H^{0}(U, \O_{X})
\]
where $U=X \setminus (D \cup \di(t_{j}))$ is a dense open of $X$.
Let $V = \Spec A \subset U$ be an open subset on which $\L$ and $\M$ are trivial.
Fix isomorphisms $\L|_{V} \simeq \O_{V}$ and $\M|_{V} \simeq \O_{V}$.
By these isomorphisms and $\psi$, we get isomorphisms $\O_{X}(D)|_{V} \simeq \L|_{V} \otimes \M|_{V}^{-1} \simeq \O_{V} $.
Let $f, g, f_{D} \in A$ be elements corresponding to $s_{i}|_{V}, t_{j}|_{V}, s_{D}|_{V}$
via these isomorphisms.
Then $t_{j}^{-1}|_{V}$ and $s_{D}^{-1}|_{V}$ correspond to $g^{-1}$ and $f_{D}^{-1}$ and 
\[
\left. \frac{s_{i}}{s_{D}t_{j}}\right|_{V} \quad \text{corresponds to} \quad \frac{f}{f_{D}g}.
\]

\end{cons}

\subsection{Bounded subsets}
In this section, we fix a field $K$ and a proper set of absolute values $M_{K}$ on $K$.
We fix an algebraic closure $K \subset \overline{K}$.
Let $M=M( \overline{K}) = \{v \mid \text{$v$ is an absolute value on $ \overline{K}$ such that $v|_{K} \in M_{K}$}\}$.

For absolute value $v$ on any field, let $ \epsilon(v)=1$ if $v$ is archimedean and $ \epsilon(v)=0$ if $v$
is non-archimedean.

\begin{defn}
Let $K \subset L \subset  \overline{K}$ be an intermediate field such that $[L:K]<\infty$. 
Let $X$ be an algebraic scheme over $L$.
\begin{enumerate}
\item
Let $U \subset X$ be an open affine subset.
Let $v \in M( \overline{K})$.
A subset $B \subset U( \overline{K})$ is said to be bounded with respect to $U$ and $v$ if
\begin{align*}
\sup_{x\in B}\{|f(x)|_{v}\} < \infty \quad \text{for all $f\in \O(U)$}.
\end{align*}

\item
Let $U \subset X$ be an open affine subset.
An $M_{K}$-bounded family of subsets of $U$ is a family $B=(B_{v})_{v\in M( \overline{K})}$ such that
\begin{enumerate}
\item 
for each $v \in M( \overline{K})$, $B_{v} \subset U( \overline{K})$ is a bounded subset with respect to $U$ and $v$;
\item
for any $f \in \O(U)$ and for any $v_{0}\in M_{K}$, 
\begin{align*}
C_{v_{0},B}(f):=\sup_{\tiny\txt{$v\in M( \overline{K})$ \\$v|v_{0}$}} \sup_{x\in B_{v}}\{  |f(x)|_{v}\} <\infty
\end{align*}
and $C_{v_{0},B}(f)\leq 1$ for all but finitely many $v_{0}\in M_{K}$.
\end{enumerate}
\end{enumerate}
\end{defn}

\begin{rmk}
Let $V \subset U \subset X$ are open affine subsets of $X$.
Let $v \in M$.
Then a bounded subset with respect to $V$ and $v$ is a bounded subset with respect to $U$ and $v$.
Also, an $M_{K}$-bounded family of subsets of $V$ is an $M_{K}$-bounded family of subsets of $U$.
\end{rmk}

\begin{rmk}
Let $U \subset X$ be an open subset.
Finite unions of $M_{K}$-bounded family of subsets of $U$ are also $M_{K}$-bounded family of subsets of $U$.
\end{rmk}

\begin{lem}\label{lem:subod}
Let $K \subset L \subset  \overline{K}$ be an intermediate field such that $[L:K]<\infty$. 
Let $X$ be an algebraic scheme over $L$.
Let $U \subset X$ be an affine open subset and $B=(B_{v})_{v\in M( \overline{K})}$ be an $M_{K}$-bounded family of subsets of $U$.

Let $\{U_{i}\}_{i=1}^{r}$ be an open cover of $U$ where $U_{i}$ are principal open subsets of $U$.
Then for each $i$, there is an  $M_{K}$-bounded family of subsets $B^{i} = (B^{i}_{v})_{v\in M( \overline{K})}$ of $U_{i}$
such that
\[
\bigcup_{i=1}^{r}B_{v}^{i} = B_{v} \quad \text{for every $v\in M( \overline{K})$.}
\]
\end{lem}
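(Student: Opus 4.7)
The plan is to reduce everything to a partition-of-unity argument. Since $\{U_i = D(f_i)\}_{i=1}^{r}$ covers $U = \Spec \O(U)$, the elements $f_1,\dots,f_r$ generate the unit ideal in $\O(U)$, so I can fix $g_1,\dots,g_r \in \O(U)$ with $\sum_{i=1}^{r} g_i f_i = 1$. This single identity will govern both the way I split $B_v$ and the uniform estimates.

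For each $v \in M(\overline{K})$, I will set
\[
B_v^i = \bigl\{ x \in B_v \,\bigm|\, |g_i(x) f_i(x)|_v \geq |g_j(x) f_j(x)|_v \text{ for every } j \bigr\},
\]
so that $\bigcup_{i=1}^{r} B_v^i = B_v$ tautologically. Evaluating the partition identity at $x$ and applying the triangle or ultrametric inequality forces $|g_i(x) f_i(x)|_v \geq r^{-\epsilon(v)}$ on $B_v^i$. This simultaneously shows $f_i(x) \neq 0$, hence $B_v^i \subset U_i(\overline{K})$, and yields the crucial pointwise bound
\[
\bigl|f_i(x)^{-1}\bigr|_v \leq r^{\epsilon(v)} |g_i(x)|_v \qquad (x \in B_v^i).
\]

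To verify $M_K$-boundedness of $B^i$ on $U_i$, I would use $\O(U_i) = \O(U)[1/f_i]$ and write an arbitrary $h \in \O(U_i)$ as $h_0/f_i^N$ with $h_0 \in \O(U)$ and $N \geq 0$. The pointwise bound above gives $|h(x)|_v \leq r^{N\epsilon(v)} |h_0(x) g_i(x)^N|_v$ on $B_v^i$, and since $h_0 g_i^N \in \O(U)$, the $M_K$-boundedness of the original family $B$ controls the right-hand side on all of $B_v \supset B_v^i$.

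The only mildly delicate step is the cofiniteness clause in the definition of an $M_K$-bounded family. The factor $r^{N\epsilon(v)}$ depends only on $v_0 = v|_K$ and equals $1$ whenever $v_0$ is non-archimedean; since $M_K^{\infty}$ is finite, this multiplicative fudge factor can never spoil the condition ``$C_{v_0,B^i}(h) \leq 1$ for all but finitely many $v_0$'', because it merely rescales $C_{v_0, B}(h_0 g_i^N)$ at the finitely many archimedean places. This is the main point to watch; the rest of the verification is formal.
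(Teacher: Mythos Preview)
Your proof is correct and follows essentially the same partition-of-unity strategy as the paper: fix $\sum g_i f_i = 1$, split $B_v$ according to which index maximizes a suitable absolute value, and use this to bound $|f_i(x)^{-1}|_v$ on each piece. The only cosmetic difference is that the paper partitions by the index maximizing $|f_i(x)|_v$ (so the bound on $|f_i(x)^{-1}|_v$ involves the constants $C_{v_0,B}(g_j)$), whereas you partition by the index maximizing $|g_i(x)f_i(x)|_v$, which packages the estimate as $|h(x)|_v \leq r^{N\epsilon(v_0)}\,|h_0(x)g_i(x)^{N}|_v$ and lets you invoke $M_K$-boundedness of $B$ on the single element $h_0 g_i^{N} \in \O(U)$; this is arguably a bit tidier but not materially different.
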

\begin{proof}
Let $U=\Spec A$ and $U_{i}=\Spec A_{f_{i}}$ where $f_{i}\in A$.
Since $\bigcup_{i=1}^{r}U_{i}=U$, there are $a_{i},\dots, a_{r}\in A$ such that
\[
a_{1}f_{1}+\cdots + a_{r}f_{r}=1.
\]
Take any $v \in M( \overline{K})$ and $x\in B_{v}$.
Let $v_{0}=v|_{K} \in M_{K}$.
Then 
\begin{align*}
1=  |a_{1}f_{1}+\cdots + a_{r}f_{r}|_{v} \leq r^{ \epsilon(v_{0})} \max_{1\leq i \leq r}\{ C_{v_{0}, B}(a_{i}) \} \max_{1\leq i \leq r}\{  |f_{i}(x)|_{v}\}.
\end{align*}
Thus we have
\[
\frac{1}{\max_{1\leq i \leq r}\{  |f_{i}(x)|_{v}\}} \leq r^{ \epsilon(v_{0})} \max_{1\leq i \leq r}\{ C_{v_{0}, B}(a_{i}) \}.
\]

Now, let $B_{v}^{i} = \{ x \in B_{v} \mid |f_{i}(x)|_{v} = \max_{1\leq j \leq r}\{  |f_{j}(x)|_{v}\}\}$.
Obviously, we have $\bigcup_{i=1}^{r}B_{v}^{i}=B_{v}$.
For every $x$, one of the $f_{i}(x)$ is not zero, and hence $B_{v}^{i} \subset U_{i}( \overline{K})$.
Let $\varphi \in \O(U_{i})$.
Then we can write $\varphi= \alpha/f_{i}^{n}$ for some $ \alpha\in A$ and $n\geq 0$.
Take any $v\in M( \overline{K})$ and let $v_{0} = v|_{K} \in M_{K}$.
Then for $x\in B_{v}^{i}$ we have
\begin{align*}
 |\varphi(x)|_{v} = \frac{| \alpha(x)|_{v}}{|f_{i}(x)|_{v}^{n}}  &=  \frac{ |\alpha(x)|_{v}}{\max_{1\leq j \leq r}\{  |f_{j}(x)|_{v}\}^{n}} \\[3mm]
 &\leq r^{ \epsilon(v_{0})n}\max_{1\leq i \leq r}\{ C_{v_{0}, B}(a_{i}) \}^{n}C_{v_{0}, B}( \alpha).
\end{align*}
Thus $B_{v}^{i}$ is bounded with respect to $U_{i}$ and $v$.
Moreover
\[
\sup_{v|v_{0}}\sup_{x\in B_{v}^{i}}\{|\varphi(x)|_{v}\}\leq  r^{ \epsilon(v_{0})n}\max_{1\leq i \leq r}\{ C_{v_{0}, B}(a_{i}) \}^{n}C_{v_{0}, B}( \alpha)
\]
and the right hand side is $\leq 1$ for all but finitely many $v_{0}$.
This proves $(B_{v}^{i})_{v\in M( \overline{K})}$ is an $M_{K}$-bounded family of subsets of $U_{i}$.

\end{proof}

\begin{prop}\label{prop:bddcov}
Let $K \subset L \subset  \overline{K}$ be an intermediate field such that $[L:K]<\infty$. 
Let $X$ be a projective scheme over $L$.
Let $\{U_{i}\}_{i=1}^{r}$ be an open affine cover of $X$.
Then there are $M_{K}$-bounded family of subsets $(B^{i}_{v})_{v\in M( \overline{K})}$ of $U_{i}$ such that
\[
\bigcup_{i=1}^{r}B_{v}^{i}=X( \overline{K}) \quad \text{for every $v \in M( \overline{K})$.}
\]
\end{prop}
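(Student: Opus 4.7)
The plan is to reduce to the case of projective space with its standard affine cover, then handle a general projective $X$ by pulling back along a closed immersion $X \hookrightarrow \P^{N}_{L}$, and finally pass to the given affine cover $\{U_{i}\}$ by refining via \cref{lem:subod}.

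First I would fix a closed immersion $X \hookrightarrow \P^{N}_{L}$, write $V_{j} = D_{+}(x_{j})$ for the standard affine charts of $\P^{N}_{L}$, and for each $v \in M(\overline{K})$ define
\[
B^{j}_{v} = \{[x_{0}:\cdots:x_{N}] \in \P^{N}(\overline{K}) \mid |x_{j}|_{v} = \max_{i} |x_{i}|_{v}\},\quad \widetilde{B}^{j}_{v} = B^{j}_{v} \cap X(\overline{K}).
\]
On $\widetilde{B}^{j}_{v}$ all affine coordinates $x_{i}/x_{j}$ have $v$-value at most $1$. Any $f \in \O(X \cap V_{j})$ lifts via the surjection $\O(V_{j}) \twoheadrightarrow \O(X \cap V_{j})$ to a polynomial $\widetilde{f} \in L[x_{0}/x_{j}, \dots, x_{N}/x_{j}]$, and for $p \in \widetilde{B}^{j}_{v}$ we get $|f(p)|_{v} \le N(\widetilde{f})^{\epsilon(v)} \max_{\alpha}|c_{\alpha}|_{v}$, where $N(\widetilde{f})$ is the number of monomials in $\widetilde{f}$ and the $c_{\alpha} \in L$ are its coefficients. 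Since $M(L)$ is a proper set of absolute values on $L$ and each $v_{0} \in M_{K}$ has only finitely many extensions to $L$, this bound yields $C_{v_{0}, \widetilde{B}^{j}}(f) < \infty$ for every $v_{0}$ and $C_{v_{0}, \widetilde{B}^{j}}(f) \le 1$ outside a finite set of $v_{0}$, giving an $M_{K}$-bounded family of subsets of $X \cap V_{j}$ with $\bigcup_{j} \widetilde{B}^{j}_{v} = X(\overline{K})$.

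To match the given cover $\{U_{i}\}_{i=1}^{r}$, for each pair $(i, j)$ I cover $U_{i} \cap (X \cap V_{j})$ by finitely many subsets $W_{ijk} \subset U_{i}$ that are principal open in the Noetherian affine scheme $X \cap V_{j}$. Then $\{W_{ijk}\}_{i,k}$ is a finite cover of $X \cap V_{j}$ by principal opens, and \cref{lem:subod} applied to $\widetilde{B}^{j}$ produces $M_{K}$-bounded families $\widetilde{B}^{j,i,k}$ of subsets of $W_{ijk}$ with $\bigcup_{i,k}\widetilde{B}^{j,i,k}_{v} = \widetilde{B}^{j}_{v}$. By the remark that bounded families pass to larger affine opens, each $\widetilde{B}^{j,i,k}$ is also $M_{K}$-bounded as a family in $U_{i}$, so setting $B^{i}_{v} := \bigcup_{j,k} \widetilde{B}^{j,i,k}_{v}$ (a finite union, hence still $M_{K}$-bounded) gives families with $\bigcup_{i} B^{i}_{v} = X(\overline{K})$. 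The main obstacle is the projective space step: one has to pass between $v \in M(\overline{K})$ and $v|_{K} \in M_{K}$ through the finitely many intermediate extensions $w \in M(L)$, while keeping uniform control of both the archimedean factor $N(\widetilde{f})^{\epsilon(v)}$ and the cofinite vanishing $C_{v_{0}, \cdot}(f) \le 1$, which is precisely where properness of $M(L)$ is used.
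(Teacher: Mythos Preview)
Your proposal is correct and follows essentially the same approach as the paper's proof. Both arguments embed $X$ into $\P^{N}_{L}$, take the pull-back of the standard affine cover, define the bounded sets by the condition that one homogeneous coordinate has maximal absolute value, and then invoke \cref{lem:subod} to pass from this specific cover to the arbitrary cover $\{U_{i}\}$; the only difference is that the paper states the reduction via \cref{lem:subod} in one sentence at the outset, while you spell out explicitly how to refine $U_{i}\cap V_{j}$ by principal opens of $V_{j}$ before applying the lemma.
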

\begin{proof}
By \cref{lem:subod}, it is enough to show the proposition for one specific open affine cover $\{U_{i}\}_{i=1}^{r}$ of $X$.

Fix an embedding $ \iota \colon X \longrightarrow \P^{N}_{L}=\Proj L[x_{0},\dots,x_{N}]$.
Set $U_{i} = \iota^{-1}(D_{+}(x_{i}))$ and $\varphi_{ij}= \iota^{*}(x_{j}/x_{i}) \in \O(U_{i})$.
Let 
\[
B_{v}^{i} = \{ x\in U_{i}( \overline{K}) \mid |\varphi_{ij}(x)|_{v}\leq 1 \text{ for all $j=0,\dots,N$}\}.
\]
Since $\varphi_{ij}, j=0,\dots, N$ generate $\O(U_{i})$ as an $L$-algebra, 
$B_{v}^{i}$ is bounded with respect to $U_{i}$ and $v$.
Take any $f \in \O(U_{i})$.
Then  there is a polynomial $P$ in $N+1$-variables with coefficient in $L$
such that $f=P(\varphi_{i0},\dots , \varphi_{iN})$.
Then for any $v_{0}\in M_{K}$, we have
\begin{align*}
\sup_{\tiny\txt{$v|v_{0}$\\ $v\in M$}} \sup_{x\in B^{i}_{v}} \{|f(x)|_{v}\} &\leq \sup_{v|v_{0}} \{ T^{ \epsilon(v_{0})} \max_{\tiny\txt{$a$ coeff. of $P$}}\{|a|_{v} \} \}\\
&\leq T^{ \epsilon(v_{0})} \sup_{\tiny \txt{$w \in M(L)$ \\ $w|v_{0}$}} \{\max\{ |a|_{w} \mid \text{$a$ coeff. of $P$}  \}\}
\end{align*}
where $T$ is the number of terms in $P$.
The right hand side is $\leq 1$ for all but finitely many $v_{0}$.
Therefore $(B_{v}^{i})_{v\in M}$ is an $M_{K}$-bounded family of subsets of $U_{i}$.

Finally we prove $\bigcup_{i=0}^{N}B_{v}^{i}=X( \overline{K})$.
Fix $v\in M$.
Let $x \in X( \overline{K})$.
Write $ \iota(x) = (a_{0}:\cdots : a_{N})$.
For $i $ such that $|a_{i}|_{v}=\max_{0\leq j \leq N}\{|a_{j}|_{v}\}$,
we have 
\[
|\varphi_{ij}(x)|_{v} = \frac{|a_{j}|_{v}}{|a_{i}|_{v}}\leq 1 \quad \text{for $j=0,\dots, N$.}
\]
Thus $x \in B_{v}^{i}$ and this proves $\bigcup_{i=0}^{N}B_{v}^{i}=X( \overline{K})$.

\end{proof}

\subsection{Local height associated to presentations of closed subschemes}

In this section we fix an infinite field $K$ equipped with a proper set of absolute values $M_{K}$.
We fix an algebraic closure $ \overline{K}$ of $K$ and let 
\[
M = M( \overline{K}) = \left\{  |\ |_{v} \middle| \txt{$  |\ |_{v}$ is an absolute value on $ \overline{K}$ such that \\the restriction $  |\ |_{v}|_{K}$ on $K$ is an element of $M_{K}$}\right\}.
\]

Let $K \subset F \subset L \subset \overline{K}$ be intermediate fields such that $[L:K]<\infty$.
(The field $F$ is going to be the field over which our ambient scheme $X$ is defined and $L$ is going to be the one over which 
the closed subschemes, to which we will associate height functions, are defined.)

\begin{rmk}
For any $F$-scheme $X$, there is a canonical injection $X(L) \longrightarrow X( \overline{K})$ induced by the inclusion $L \subset \overline{K}$.
By this manner, we identify $X(L)$ as a subset of $X( \overline{K})$.
\end{rmk}

{\bf We fix a quasi-projective scheme $X$ over $F$ in the rest of this section.}

\begin{defn}[\bf Local height associated with presentations]\label{def:lochtpre}
Let $Y \subset X_{L}$ be a closed subscheme such that $Y \cap \Ass(X_{L}) = \emptyset$.
Let $\Y = (Y; \D_{1},\dots, \D_{r})$ be a presentation of $Y$ where
\[
\D_{i}=(s_{D_{i}}; \L^{(i)}, s_{0}^{(i)}, \dots s_{n_{i}}^{(i)}; \M^{(i)},t_{0}^{(i)},\dots,t_{m_{i}}^{(i)} ; \psi^{(i)}).
\]
We define the map, which is called local height function associated with presentation $\Y$,
\begin{align*}
\lambda_{\Y} \colon (X_{L}\setminus Y)( \KK) \times M(\KK) \longrightarrow \R
\end{align*}
as follows.
For $(x,v) \in (X_{L}\setminus Y)( \KK) \times M(\KK)$, we set
\begin{align*}
\lambda_{\Y}(x,v):= \min\{ \lambda_{\D_{1}}(x,v), \dots, \lambda_{\D_{r}}(x,v)\}
\end{align*}
where 
\begin{align*}
\lambda_{\D_{i}}(x,v) := \log\left( \max_{0\leq j \leq n_{i}}\left\{ \min_{0\leq k \leq m_{i}}\left\{ \left|\frac{s^{(i)}_{j}}{s_{D_{i}}t^{(i)}_{k}}(x)\right|_{v}  \right\} \right\}\right).
\end{align*}
Here $s^{(i)}_{j}/s_{D_{i}}t^{(i)}_{k}$ is an element of $\O_{X}(X\setminus(D_{i}\cup \di(t_{k}^{(i)})))$
defined in \cref{mfunc}.
If $x \notin (X\setminus(D_{i}\cup \di(t_{k}^{(i)})))( \KK)$, we set $|s^{(i)}_{j}/s_{D_{i}}t^{(i)}_{k} (x)|_{v}=\infty$.
We use the conventions $a < \infty $ for all $a\in \R$ and $\log (\infty)=\infty$.
\end{defn}

\begin{rmk}
Sometime it is convenient to allow $x \in Y( \KK)$.
We define $ \lambda_{\Y}(x,v)=\infty $ if $x \in Y(\KK)$.
\end{rmk}

\begin{lem}\label{lem:sumofdiv}
Let $D, E$ be two effective Cartier divisors on $X_{L}$ and let $\D, \E$ be presentations of $D, E$.
Then 
\begin{align*}
\lambda_{\D+\E} = \lambda_{\D}+ \lambda_{\E} \colon (X_{L}\setminus (D\cup E)) \times M(\KK) \longrightarrow \R.
\end{align*}
\end{lem}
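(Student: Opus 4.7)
The plan is to reduce the lemma to two observations: (i) multiplicativity of the meromorphic functions of \cref{mfunc} under the operation $\D + \E$, and (ii) an elementary separation identity for $\max$ and $\min$ over products of non-negative reals. Passing to $\log$ then turns the multiplicative identity into the additive one claimed.

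For the first observation, write
\begin{align*}
\D &= (s_D; \L, s_0, \dots, s_n; \M, t_0, \dots, t_m; \psi),\\
\E &= (s_E; \L', s'_0, \dots, s'_{n'}; \M', t'_0, \dots, t'_{m'}; \psi').
\end{align*}
I claim that on $X_L \setminus (D \cup E \cup \di(t_j) \cup \di(t'_{j'}))$,
\[
\frac{s_i \otimes s'_{i'}}{s_{D+E}(t_j \otimes t'_{j'})} = \frac{s_i}{s_D t_j} \cdot \frac{s'_{i'}}{s_E t'_{j'}}
\]
as sections of $\O_X$. This follows from the canonical identifications: under $\O_X(D+E) \simeq \O_X(D) \otimes \O_X(E)$ we have $s_{D+E} = s_D \otimes s_E$, and for regular sections $(t_j \otimes t'_{j'})^{-1} = t_j^{-1} \otimes t'_{j'}^{-1}$ on the overlap where neither vanishes. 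I would verify this directly by localizing to a common trivializing affine chart as in \cref{mfunc}: on such a chart every section becomes an element of the structure ring, and the identity reduces to a trivial fraction computation.

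For the second observation, for non-negative (possibly infinite) reals $a_{i,j}, b_{i',j'}$,
\[
\max_{i,i'}\min_{j,j'}\{ a_{i,j} b_{i',j'}\} = \Bigl(\max_i \min_j\{ a_{i,j}\}\Bigr)\Bigl(\max_{i'}\min_{j'}\{b_{i',j'}\}\Bigr).
\]
For fixed $i,i'$ the inner $\min$ separates because $a_{i,j}$ depends only on $j$ and $b_{i',j'}$ only on $j'$, and non-negativity lets the outer $\max$ separate as well. The convention $|s_j/(s_D t_k)(x)|_v = \infty$ when $t_k$ vanishes at $x$ is harmless: the $t_k$ generate $\M$ and the $t'_{k'}$ generate $\M'$, so at least one pair $(t_j, t'_{j'})$ does not vanish at $x$, keeping the inner $\min$ on both sides finite.

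Combining both observations and taking $\log$ yields $\lambda_{\D+\E}(x,v) = \lambda_\D(x,v) + \lambda_\E(x,v)$ on $(X_L \setminus (D \cup E))(\KK) \times M(\KK)$. The main obstacle is the bookkeeping in the first step, specifically checking the compatibility $s_{D+E} = s_D \otimes s_E$ under the canonical isomorphism $\O_X(D+E) \simeq \O_X(D) \otimes \O_X(E)$ and the compatibility of $\psi \otimes \psi'$ with $\psi$ and $\psi'$; once these are pinned down on a trivializing chart, the remainder is purely formal.
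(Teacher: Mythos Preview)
Your proposal is correct and follows essentially the same approach as the paper: the paper also notes the multiplicativity $\frac{s_i \otimes s'_j}{s_{D+E}\, t_k \otimes t'_l} = \frac{s_i}{s_D t_k}\cdot \frac{s'_j}{s_E t'_l}$ and then separates the $\max$/$\min$ over the product to conclude. Your extra remarks on the $\infty$ convention and the compatibility $s_{D+E}=s_D\otimes s_E$ are sound elaborations of points the paper leaves implicit.
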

\begin{proof}
Let
\begin{align*}
&\D = (s_{D}; \L, s_{0}, \dots, s_{n};\M, t_{0},\dots, t_{m}; \psi), \\
&\E = (s_{E}; \L', s'_{0}, \dots, s'_{n'};\M', t'_{0},\dots, t'_{m'}; \psi').
\end{align*}
Recall the sum $\D + \E$ is defined as
\begin{align*}
&\D+\E = \\
& \Bigl( s_{D+E};  \L \otimes \L', \{s_{i} \otimes s'_{j}\}_{\tiny \txt{$0\leq i \leq n$\\  $0\leq j \leq n'$}} 
; \M\otimes \M', \{t_{k}\otimes t'_{l}\}_{\tiny \txt{$0\leq k \leq m$\\$0\leq l \leq m'$}} ; \psi \otimes \psi' \Bigr).
\end{align*}
Note that
\begin{align*}
\frac{s_{i} \otimes s'_{j}}{ s_{D+E} t_{k} \otimes t'_{l}} = \frac{s_{i}}{s_{D} t_{k}} \frac{s'_{j}}{s_{E} t'_{l}}
\end{align*}
as elements of $\O_{X_{L}}(X_{L} \setminus (D\cup E \cup \di (t_{k}) \cup \di (t'_{l})))$.
Then for $(x,v) \in (X_{L}\setminus (D\cup E)) \times M(\KK)$,
\begin{align*}
\lambda_{\D+\E}(x,v) & = \log \left( \max_{i,j} \min_{k,l} \left\{  \left|\frac{s_{i}}{s_{D} t_{k}}(x) \frac{s'_{j}}{s_{E} t'_{l}}(x)\right|_{v}  \right\} \right) \\[3mm]
& = \log \left( \max_{i} \min_{k} \left\{  \left|\frac{s_{i}}{s_{D} t_{k}}(x) \right|_{v} \right\} 
\max_{j} \min_{l} \left\{ \left|\frac{s'_{j}}{s_{E} t'_{l}}(x)\right|_{v} \right\} \right) \\[3mm]
&= \lambda_{\D}(x,v) + \lambda_{\E}(x,v).
\end{align*}

\end{proof}

\begin{lem}\label{lem:pullofdiv}
Let $D$ be an effective Cartier divisor on $X_{L}$ and let $\D$ be a presentation of $D$.
Let $X'$ be a quasi-projective scheme over $L$ and $f \colon X' \longrightarrow X_{L}$ a morphism over $L$.
Suppose $f^{*}\D$ is well-defined.
Then 
\[
\lambda_{\D}\circ (f\times \id) = \lambda_{f^{*}\D} \colon (X'\setminus f^{-1}(D))(\KK) \times M(\KK) \longrightarrow \R.
\]
\end{lem}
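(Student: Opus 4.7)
The plan is to reduce the equality of local heights to a pointwise identity of the ``meromorphic functions'' $s_i/(s_D t_k)$ defined in \cref{mfunc}, namely the naturality
\[
f^{*}\!\left(\frac{s_i}{s_D t_k}\right) \;=\; \frac{f^{*}s_i}{(f^{*}s_D)(f^{*}t_k)}
\]
as sections of $\O_{X'}$ on $X'\setminus (f^{-1}(D)\cup \di(f^{*}t_k))$. Once this is established, evaluating at a point $x\in (X'\setminus f^{-1}(D))(\overline{K})$ and comparing $v$-adic absolute values gives the equality of the two height functions term by term, and hence after taking $\min_k$ and $\max_j$.

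To set up the naturality, I would first check that the ``inverse'' operation of \cref{mfunc} commutes with pullback, i.e.\ for a regular section $s \in H^{0}(X_L, \L)$ whose pullback $f^{*}s$ is again regular (which holds precisely because $f^{*}\D$ is well-defined: neither $s_D$, nor any $s_i$, $t_k$ vanishes along $f(\Ass(X'))$, so their pullbacks are regular), one has $f^{*}(s^{-1}) = (f^{*}s)^{-1}$ on $f^{-1}(X_L\setminus \di(s))=X'\setminus \di(f^{*}s)$. This is a formal consequence of the compatibility of $f^{*}$ with the evaluation pairing $\L\otimes\L^{-1}\to \O_{X_L}$ and with $\id_{\O}$. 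Applying this to $t_k$ and to $s_D$, together with the obvious $f^{*}(s_i\otimes t_k^{-1}\otimes s_D^{-1}) = f^{*}s_i\otimes (f^{*}t_k)^{-1}\otimes (f^{*}s_D)^{-1}$ and the fact that $f^{*}\Psi$ is the isomorphism $\Psi'$ constructed from $f^{*}\psi$, yields the displayed identity above.

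With the naturality in hand, the rest is bookkeeping. For $(x,v)\in (X'\setminus f^{-1}(D))(\overline{K})\times M(\overline{K})$ and any $j,k$,
\[
\left|\frac{f^{*}s_j}{(f^{*}s_D)(f^{*}t_k)}(x)\right|_{v} \;=\; \left|\frac{s_j}{s_D t_k}(f(x))\right|_{v},
\]
where both sides are interpreted as $\infty$ precisely when $x\in \di(f^{*}t_k)$, equivalently $f(x)\in \di(t_k)$; since $f(x)\notin D$ and $x\notin f^{*}D$, the other possible source of $\infty$ does not occur. Taking $\min_{k}$ and then $\max_{j}$ of the logarithms gives $\lambda_{f^{*}\D}(x,v) = \lambda_{\D}(f(x),v)$, as desired.

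The main obstacle is the naturality identity in the first paragraph: all the subsequent manipulations are formal. Concretely, one must check that the isomorphism $\Psi\colon \L\otimes\M^{-1}\otimes\O_{X_L}(D)^{-1}\xrightarrow{\sim}\O_{X_L}$ built from $\psi$ together with the canonical trivialization commutes with $f^{*}$, and that the ad hoc construction $s\mapsto s^{-1}$ for a regular section is functorial under pullback. Both are standard, but worth writing out once to confirm that the conventions (in particular the assignment of $\infty$) are compatible with the scheme-theoretic preimage $f^{*}D$.
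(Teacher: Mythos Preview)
Your proposal is correct and follows essentially the same approach as the paper: both reduce the statement to the pointwise identity $\frac{f^{*}s_{i}}{s_{f^{*}D}\, f^{*}t_{k}}(x) = \frac{s_{i}}{s_{D}\, t_{k}}(f(x))$ and then take $\min_{k}$, $\max_{j}$, and $\log$. The paper simply asserts this identity and writes the resulting three-line chain of equalities, whereas you spell out the naturality of $s \mapsto s^{-1}$ and of $\Psi$ under $f^{*}$ that justifies it; your extra care is harmless and arguably clarifies the one point the paper leaves implicit (including the identification $f^{*}s_{D} = s_{f^{*}D}$).
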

\begin{proof}
Let
\begin{align*}
&\D = (s_{D}; \L, s_{0}, \dots, s_{n};\M, t_{0},\dots, t_{m}; \psi).
\end{align*}

For $(x,v) \in  (X'\setminus f^{-1}(D))(\KK) \times M(\KK)$, we have
\begin{align*}
\lambda_{f^{*}\D}(x,v) &= \log \max_{i} \min_{j} \left\{ \left| \frac{f^{*}s_{i}}{s_{f^{*}D} f^{*}t_{j}}(x)  \right|_{v}   \right\}\\[3mm]
& =  \log \max_{i} \min_{j} \left\{ \left| \frac{s_{i}}{s_{D} t_{j}}(f(x))  \right|_{v}   \right\} \\[3mm]
& = \lambda_{\D}(f(x),v).
\end{align*}
\end{proof}
 
 \begin{prop}[Basic properties of local height functions associated with presentations]\label{prop:basicprop}
 Let $Y, W \subset X_{L}$ be closed subschemes such that 
 $Y\cap \Ass(X)=W\cap \Ass(X) =  \emptyset$.
 Let $\Y, \W$ be presentations of $Y, W$.
 Then the following hold:
 \begin{enumerate}
 \item
 $ \lambda_{\Y\cap\W} = \min\{ \lambda_{\Y}, \lambda_{\W}\} \colon (X_{L}\setminus (Y\cap W))(\KK) \times M(\KK) \longrightarrow \R$;
 \item
 $ \lambda_{\Y+\W} = \lambda_{\Y} + \lambda_{\W} \colon (X_{L}\setminus (Y\cup W))(\KK) \times M(\KK) \longrightarrow \R$;
 \item
 Let $L \subset L' \subset \KK$ be an intermediate field such that $[L':L]<\infty$.
 Then 
\[
\xymatrix@R=6pt@C=1pt{
\lambda_{\Y} = \lambda_{\Y_{L'}} \ar@{}[r]|(.40) \displaystyle{\colon} & (X_{L}\setminus Y)(\KK) \times M(\KK) \ar@{=}[d] \ar[rrrrr] &&&&& \R.\\
 & (X_{L'}\setminus Y_{L'})(\KK) \times M(\KK) &&&&&
}
\]
\item
Let $X'$ be a quasi-projective scheme over $L$ and $f \colon X' \longrightarrow X$ a morphism over $L$.
Suppose $f^{*}\Y$ is well-defined.
Then 
\[
\lambda_{\Y}\circ (f\times \id) = \lambda_{f^{*}\Y} \colon (X'\setminus f^{-1}(Y))(\KK) \times M(\KK) \longrightarrow \R.
\]
 \end{enumerate}
 \end{prop}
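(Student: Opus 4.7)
The plan is to reduce each of the four parts to the divisor-level statements already established in \cref{lem:sumofdiv,lem:pullofdiv}, and then argue that the $\min$-over-divisors operation in \cref{def:lochtpre} is compatible with the operations on presentations.

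For (1), I will unpack the definition of the presentation $\Y\cap \W = (Y\cap W; \D_1,\dots,\D_r,\E_1,\dots,\E_s)$, which is just the concatenation of the constituent divisor presentations. Evaluating \cref{def:lochtpre} at a point $(x,v)$ then gives
\[
\lambda_{\Y\cap\W}(x,v)=\min\{\lambda_{\D_1}(x,v),\dots,\lambda_{\D_r}(x,v),\lambda_{\E_1}(x,v),\dots,\lambda_{\E_s}(x,v)\},
\]
and associativity of $\min$ rewrites this as $\min\{\lambda_{\Y}(x,v),\lambda_{\W}(x,v)\}$. For (2), I will invoke \cref{lem:sumofdiv} for each pair $(\D_i,\E_j)$ to replace $\lambda_{\D_i+\E_j}$ by $\lambda_{\D_i}+\lambda_{\E_j}$, and then use the algebraic identity
\[
\min_{i,j}\bigl(\lambda_{\D_i}(x,v)+\lambda_{\E_j}(x,v)\bigr)=\min_{i}\lambda_{\D_i}(x,v)+\min_{j}\lambda_{\E_j}(x,v),
\]
which holds because the two $\min$ operations are independent (they are taken over disjoint index sets of real-valued quantities, with the convention that $a+\infty=\infty$ handling points where some $\D_i$ or $\E_j$ is undefined).

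For (3), the key observation is that \cref{lem:prefldext} shows $\Y_{L'}=(Y_{L'};(\D_1)_{L'},\dots,(\D_r)_{L'})$ is indeed a presentation, and that the meromorphic functions $s_j^{(i)}/(s_{D_i}t_k^{(i)})$ defined in \cref{mfunc} are stable under base change: $(s_j^{(i)})_{L'}/(s_{(D_i)_{L'}}(t_k^{(i)})_{L'})$ equals the base change of $s_j^{(i)}/(s_{D_i}t_k^{(i)})$. For any $x \in (X_L\setminus Y)(\KK)$ viewed as a $\KK$-point via the factorization $\Spec\KK\to \Spec L'\to \Spec L$, the value $|(s_j^{(i)}/s_{D_i}t_k^{(i)})(x)|_v$ is computed by the same ring-theoretic evaluation whether we regard the function as living on $X_L$ or $X_{L'}$. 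Hence the summands of $\lambda_{\D_i}$ and of $\lambda_{(\D_i)_{L'}}$ agree pointwise, and taking the $\min$ over $i$ yields the claim.

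For (4), I apply \cref{lem:pullofdiv} to each $\D_i$ to obtain $\lambda_{\D_i}(f(x),v)=\lambda_{f^*\D_i}(x,v)$ for every $(x,v)\in (X'\setminus f^{-1}(Y))(\KK)\times M(\KK)$, and then take the $\min$ over $i$ on both sides, which matches the definition of $\lambda_{f^*\Y}$ with $f^*\Y=(f^{-1}(Y);f^*\D_1,\dots,f^*\D_r)$. The only real subtlety I anticipate is the bookkeeping surrounding the $\infty$ convention, namely checking that on points where some individual $\lambda_{\D_i}$ takes the value $\infty$ (because $x\in D_i\cup\di(t_k^{(i)})$) the identities above remain valid; this is handled uniformly by $a+\infty=\infty$ and $\min\{a,\infty\}=a$, both of which are compatible with the operations in (1)--(4). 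No deep step is required; the substance of the proposition is already carried by \cref{lem:sumofdiv,lem:pullofdiv,lem:prefldext}, and the work here is to verify that the $\min$-formation respects these lemmas.
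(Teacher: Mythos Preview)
Your proposal is correct and follows essentially the same approach as the paper: for (1), (2), and (4) you unpack the definitions of $\Y\cap\W$, $\Y+\W$, and $f^{*}\Y$ and reduce to \cref{lem:sumofdiv} and \cref{lem:pullofdiv} together with the identity $\min_{i,j}(a_i+b_j)=\min_i a_i+\min_j b_j$, exactly as the paper does, and for (3) you spell out what the paper simply calls ``trivial from the definition.'' Your extra remarks on the $\infty$ convention and the base-change stability of the meromorphic functions are accurate and harmless elaborations.
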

 \begin{proof}
 Let $\Y=(Y; \D_{1},\dots,\D_{r})$ and $\W =(W; \E_{1},\dots,\E_{s})$.
 
 (1) Since $\Y \cap \W=(Y\cap W ; \D_{1},\dots,\D_{r}, \E_{1},\dots,\E_{s})$, we have
 \[
 \lambda_{\Y\cap \W}=\min_{i,j}\{ \lambda_{D_{i}}, \lambda_{\E_{j}}\} = \min\{ \lambda_{\Y}, \lambda_{\W}\}.
 \]
 
 (2) Since $\Y+\W=(Y+W; \{\D_{i}+\E_{j}\}_{i,j})$, by \cref{lem:sumofdiv} we have
 \[
 \lambda_{\Y+\W} = \min_{i,j}\{ \lambda_{\D_{i}+\E_{j}}\} = \min_{i,j}\{ \lambda_{\D_{i}}+ \lambda_{\E_{j}}\} = \lambda_{\Y}+ \lambda_{\W}.
 \]
 
 (3)
 This is trivial from the definition.
 
 (4)
 Since $f^{*}\Y=(f^{-1}(Y); f^{*}\D_{1},\dots ,f^{*}\D_{r})$, by \cref{lem:pullofdiv} we have
 \begin{align*}
  \lambda_{f^{*}\Y} & = \min\{ \lambda_{f^{*}\D_{1}},\dots, \lambda_{f^{*}\D_{r}}\}\\ 
& = \min\{ \lambda_{\D_{1}}\circ (f\times \id),\dots, \lambda_{\D_{1}}\circ (f\times \id)\}
 = \lambda_{\Y} \circ (f \times \id).
 \end{align*}

 \end{proof}

 \begin{lem}\label{lem:restoratpt}
 Let $Y \subset X_{L}$ be a closed subscheme such that 
 $Y\cap \Ass(X) =  \emptyset$.
 Let $\Y$ be a presentation of $Y$.
 Let $L \subset L' \subset \KK$ be an intermediate field.
 Let $x \in X(L')$ be any point.
 For $v, w \in M(\KK)$, if $v|_{L'}=w|_{L'}$, then
 \[
 \lambda_{\Y}(x, v) = \lambda_{\Y}(x, w).
 \] 
 \end{lem}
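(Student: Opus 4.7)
The plan is to show that each elementary quantity used in the definition of $\lambda_{\Y}(x,v)$ already depends on $v$ only through its restriction to $L'$, and then assemble these observations.

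First I would unfold the definition: writing $\Y = (Y; \D_1,\ldots,\D_r)$ with $\D_i=(s_{D_i}; \L^{(i)}, s_0^{(i)},\ldots; \M^{(i)}, t_0^{(i)},\ldots)$, the value $\lambda_{\Y}(x,v)$ is obtained by a finite min--max--min over the logarithms $\log\bigl|\frac{s_j^{(i)}}{s_{D_i}t_k^{(i)}}(x)\bigr|_v$. Since $\log$, $\min$, $\max$ act purely formally, it suffices to verify that for each triple $(i,j,k)$ the value $\bigl|\frac{s_j^{(i)}}{s_{D_i}t_k^{(i)}}(x)\bigr|_v$ depends on $v$ only through $v|_{L'}$, together with the boundary cases.

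Next I would use the key observation coming from \cref{mfunc}: the rational function $s_j^{(i)}/(s_{D_i}t_k^{(i)})$ is a genuine regular function on the open subscheme $U_{i,k} = X_L\setminus(D_i\cup \di(t_k^{(i)}))$, and this function is defined over $L$ (hence \emph{a fortiori} over $L'$) because the presentation $\Y$ and the sections constituting it are all defined over $L$.

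I would then perform a two-case analysis, using that $L\subset L'$ gives $x \in X(L') = X_L(L')$ functorially. If the morphism $x\colon \Spec L' \to X_L$ factors through $U_{i,k}$, then the pullback $x^{*}\bigl(\frac{s_j^{(i)}}{s_{D_i}t_k^{(i)}}\bigr)$ is an element of $L'$, and for any element $\alpha \in L'$ the quantity $|\alpha|_v$ depends only on $v|_{L'}$, giving what we need. If instead $x$ does not factor through $U_{i,k}$ (so the image lies inside $D_i\cup\di(t_k^{(i)})$, a property of the scheme-theoretic image which passes to the composite $\Spec\KK\to\Spec L'\to X_L$), then the convention $|\cdot|_v = \infty$ in \cref{def:lochtpre} is invoked for both $v$ and $w$. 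In the remaining degenerate case $x \in Y(L')$, the remark after \cref{def:lochtpre} gives $\lambda_{\Y}(x,v)=\infty=\lambda_{\Y}(x,w)$.

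There is no real obstacle here; the statement is essentially a rigidity fact saying that evaluation of an $L$-rational function at an $L'$-point lands in $L'$, and that the convention for indeterminacy points is made independent of $v$. The only thing to be careful about is tracking how an $L'$-point, viewed as a $\KK$-point via $L'\subset\KK$, interacts with the open subsets $U_{i,k}$; this is handled by noting that the scheme-theoretic image is unchanged under the base change $\Spec\KK\to\Spec L'$.
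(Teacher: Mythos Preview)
Your proposal is correct and follows essentially the same approach as the paper: observe that each quantity $\frac{s_j^{(i)}}{s_{D_i}t_k^{(i)}}(x)$ lies in $L'$ (since the presentation is over $L$ and $x$ is an $L'$-point), so its absolute value depends only on $v|_{L'}$. Your write-up is more careful than the paper's in that you explicitly track the $\infty$ conventions for points outside $U_{i,k}$ and for $x\in Y(L')$, which the paper leaves implicit.
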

 \begin{proof}
 In the notation of \cref{def:lochtpre},
 \[
 \frac{s^{(i)}_{j}}{s_{D_{i}}t^{(i)}_{k}}(x)
 \]
 is contained in $L'$. Thus 
 \[
  \left|\frac{s^{(i)}_{j}}{s_{D_{i}}t^{(i)}_{k}}(x)\right|_{v} =  \left|\frac{s^{(i)}_{j}}{s_{D_{i}}t^{(i)}_{k}}(x)\right|_{w}
 \]
 and we get $ \lambda_{\Y}(x, v) = \lambda_{\Y}(x, w)$.
 \end{proof}

 \begin{defn}[$M_{K}$-constant]
 A map $ \gamma \colon M(\KK) \longrightarrow \R_{\geq0}$ is called $M_{K}$-constant if
 $ \gamma$ factors through $M_{K}$, i.e.
 \[
 \xymatrix{
 M(\KK) \ar[d]_{\text{restriction}} \ar[r]^{ \gamma} & \R_{\geq0}\\
 M_{K} \ar[ru]_{ \exists \overline{ \gamma}} &
 }
 \]
 and $ \overline{ \gamma}(v)=0$ for all but finitely many $v\in M_{K}$.
 We denote $ \overline{ \gamma}$ also by $\gamma$ for simplicity.
 
 For any intermediate field $K \subset L \subset \KK$ with $[L:K]<\infty$,
 we also write $ \gamma(v)= \overline{ \gamma}(v|_{K})$ for $v \in M(L)$.
 Note that $ \gamma(v)=0$ for all but finitely many $v \in M(L)$. 
 \end{defn}

 We have concrete expressions of local heights when they are restricted on 
 $M_{K}$-bounded families of subsets.
 
 \begin{lem}\label{lem:divhtbddset}
 Let $D \subset X_{L}$ be an effective Cartier divisor and $\D$ be a presentation of $D$.
 Let $U \subset X_{L}$ be a non-empty open affine subset such that
 \begin{align*}
 &D|_{U} = \di (f) \quad \text{ for a non-zero divisor $f\in \O(U)$};\\
 & \L|_{U} \simeq \O_{U} \simeq \M|_{U}.
 \end{align*}
 Let $B=(B_{v})_{v\in M(\KK)}$ be an $M_{K}$-bounded family of subsets of $U$.
 Then there exists an $M_{K}$-constant $ \gamma$ such that
 \begin{align*}
 \log \frac{1}{|f(x)|_{v}} - \gamma(v) \leq \lambda_{\D}(x,v) \leq \log \frac{1}{|f(x)|_{v}} + \gamma(v)
 \end{align*}
 for all $v \in M(\KK)$ and $x \in B_{v}\setminus D(\KK)$.
 \end{lem}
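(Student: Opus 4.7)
The plan is to trivialize all the relevant invertible sheaves on $U$, reduce $\lambda_{\D}$ to a transparent expression in the local equation $f$, and absorb every remaining term into an $M_K$-constant by combining the bounded family property with the Bézout-type relations coming from the generation hypotheses on the $s_j$'s and $t_k$'s.

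Concretely, first fix trivializations $\L|_U \simeq \O_U$ and $\M|_U \simeq \O_U$, and trivialize $\O_X(D)|_U \simeq \O_U$ via the local equation $f$ (so that the canonical section $s_D$ corresponds to $f$, which is legitimate since $\I_D|_U = f\O_U$). Let $a_j, b_k \in \O(U)$ denote the images of $s_j, t_k$ under these trivializations, and let $u \in \O(U)^{\times}$ be the unit representing the isomorphism $\psi$. Unwinding \cref{mfunc}, the section $s_j/(s_D t_k)$ is represented on $U$ by the regular function $u a_j/(f b_k)$, so for every $v \in M(\KK)$ and every $x \in B_v \setminus D(\KK)$,
\[
\lambda_{\D}(x,v) = \log\frac{1}{|f(x)|_v} + \log|u(x)|_v + \log\max_{0 \leq j \leq n} |a_j(x)|_v - \log\max_{0 \leq k \leq m} |b_k(x)|_v.
\]

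It therefore suffices to dominate the last three terms by an $M_K$-constant. Upper bounds for $\log|u(x)|_v$, $\log\max_j|a_j(x)|_v$, and $\log\max_k|b_k(x)|_v$ are immediate from the definition of an $M_K$-bounded family applied to the regular functions $u, a_j, b_k \in \O(U)$. For a lower bound on $|u(x)|_v$, apply the same definition to $u^{-1} \in \O(U)$. For a lower bound on $\max_j|a_j(x)|_v$, use that since $s_0, \dots, s_n$ generate $\L$ on $U$, the functions $a_0, \dots, a_n$ generate the unit ideal in $\O(U)$; choosing $\alpha_j \in \O(U)$ with $\sum_j \alpha_j a_j = 1$, the triangle (or ultrametric) inequality yields
\[
1 \leq (n+1)^{\epsilon(v_0)} \cdot \max_j |\alpha_j(x)|_v \cdot \max_j |a_j(x)|_v \qquad (v_0 := v|_K),
\]
and the $\alpha_j$-factor is again controlled by the bounded family property. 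An identical argument with coefficients $\beta_k \in \O(U)$ satisfying $\sum_k \beta_k b_k = 1$ gives the lower bound on $\max_k|b_k(x)|_v$.

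Defining $\gamma(v_0)$ to be the maximum of the $v_0$-dependent quantities obtained above produces the desired two-sided inequality. The one point requiring care---and the step I expect to be the main obstacle---is checking that $\gamma(v_0)=0$ for all but finitely many $v_0 \in M_K$. By the definition of an $M_K$-bounded family, outside a finite set of places the constants $C_{v_0,B}(u^{\pm 1})$, $C_{v_0,B}(a_j)$, $C_{v_0,B}(\alpha_j)$, $C_{v_0,B}(b_k)$, $C_{v_0,B}(\beta_k)$ are all $\leq 1$ and $v_0$ is non-archimedean (so $\epsilon(v_0)=0$); the ultrametric inequality then forces $|u(x)|_v = \max_j|a_j(x)|_v = \max_k|b_k(x)|_v = 1$, and the entire error term vanishes identically on $B_v$.
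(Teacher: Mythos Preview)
Your proposal is correct and follows essentially the same approach as the paper: trivialize $\L$, $\M$, and $\O_{X}(D)$ on $U$ to express $s_{j}/(s_{D}t_{k})$ as a ratio of regular functions, then combine the $M_{K}$-bounded family hypothesis (for upper bounds on $|u^{\pm1}(x)|_{v}$, $|a_{j}(x)|_{v}$, $|b_{k}(x)|_{v}$) with the B\'ezout relations $\sum \alpha_{j}a_{j}=1$ and $\sum \beta_{k}b_{k}=1$ coming from global generation (for lower bounds on the maxima). The only cosmetic difference is that you write the exact identity for $\lambda_{\D}(x,v)$ first and then bound each additive piece, whereas the paper bounds the product $\max_{j}\min_{k}|s_{j}/(s_{D}t_{k})(x)|_{v}$ directly; the ingredients and the verification that the resulting $\gamma$ is an $M_{K}$-constant are identical.
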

 \begin{proof}
 Let $\D=(s_{D}; \L, s_{0},\dots, s_{n}; \M, t_{0}.\dots, t_{m}; \psi)$.
 Fix isomorphisms
 \begin{align*}
 & \alpha \colon \L|_{U} \xrightarrow{\sim} \O_{U};\\
 & \beta \colon \M|_{U} \xrightarrow{\sim} \O_{U}
 \end{align*}
 and set 
 \begin{align*}
 &\alpha(s_{i}|_{U}) = g_{i};\\
 & \beta(t_{j}|_{U}) = h_{j}.
 \end{align*}
 Note that $g_{i}, h_{j}$ are non-zero divisors of $\O(U)$.
 We also have the isomorphism
 \begin{align*}
 \alpha\otimes (\beta^{\vee})^{-1} \circ \psi^{-1}  \colon \O_{X}(D)|_{U} \xrightarrow{\sim} \L\otimes \M^{-1}|_{U} \xrightarrow{\sim} \O_{U}.
 \end{align*}
 Then $ \alpha\otimes \beta^{\vee} \circ \psi^{-1} (s_{D}|_{U}) = uf$ for some $u \in \O(U)^{\times}$.
 Then on $V=U\setminus (D\cup \di (t_{j}))$, we have
 \[
 \xymatrix@R=8pt@C=15pt{
 \L \otimes \M^{-1} \otimes \O_{X}(D)^{-1}|_{V}  \ar[r]^{\psi \otimes \id}  & \O_{X}(D) \otimes \O_{X}(D)^{-1}|_{V} \ar[r]^(.75){\sim} & \O_{V}\\
s_{i}|_{V} \otimes t_{j}^{-1}|_{V} \otimes s_{D}^{-1}|_{V}  \ar@{|->}[rr] &     & \displaystyle{\left.\frac{s_{i}}{s_{D}t_{j}}\right|_{V}=\frac{g_{i}}{ufh_{j}}}.
 }
 \] 
 
 Since $s_{0},\dots, s_{n}$ generate $\L$, there are $a_{0},\dots, a_{n} \in \O(U)$ such that
 \begin{align}\label{eq:gen1}
  a_{0}g_{0}+\cdots + a_{n}g_{n}=1.
 \end{align}
 Therefore for any $v\in M(\KK)$ and $x \in B_{v}$, we have
 \begin{align*}
 1 &=  |  a_{0}(x)g_{0}(x)+\cdots + a_{n}(x)g_{n}(x)|_{v} \\[3mm]
 &\leq (n+1)^{ \epsilon(v)} \max_{0\leq i \leq n}\{ |a_{i}(x)|_{v} \}  \max_{0\leq i \leq n}\{|g_{i}(x)|_{v}\}\\[3mm]
 &\leq (n+1)^{ \epsilon(v)} \max_{0\leq i \leq n}\{ \sup_{v'|v_{0}}\sup_{y\in B_{v}}\{|a_{i}(y)|_{v'}\} \}  \max_{0\leq i \leq n}\{|g_{i}(x)|_{v}\} 
 \quad \text{where $v_{0}=v|_{K}$}\\[3mm]
 &=  (n+1)^{ \epsilon(v_{0})} \max_{0\leq i \leq n}\{C_{v_{0},B}(a_{i})\}\max_{0\leq i \leq n}\{|g_{i}(x)|_{v}\}.
 \end{align*}
 By (\ref{eq:gen1}), $\max_{0\leq i \leq n}\{C_{v_{0},B}(a_{i})\} \neq 0$ and we get
 \begin{align}\label{ineq:g}
 \max_{0\leq i \leq n}\{|g_{i}(x)|_{v}\} \geq \frac{1}{ (n+1)^{ \epsilon(v_{0})} \max_{0\leq i \leq n}\{C_{v_{0},B}(a_{i})\}}.
 \end{align}
 Similarly, we take $b_{j} \in \O(U)$ so that $b_{0}h_{0}+\cdots +b_{m}h_{m}=1$ and get
 \begin{align}\label{ineq:h}
 \max_{0\leq j \leq m}\{|h_{j}(x)|_{v}\} \geq \frac{1}{ (m+1)^{ \epsilon(v_{0})} \max_{0\leq j \leq m}\{C_{v_{0},B}(b_{j})\}}.
 \end{align}
 
 For any $v \in M(\KK)$ and $x \in B_{v}\setminus D(\KK)$, let us set $v_{0}= v|_{K}$, then by (\ref{ineq:g}) and (\ref{ineq:h}),
\begin{align*}
& \max_{0\leq i \leq n} \min_{0\leq j \leq m} \left| \frac{s_{i}}{s_{D}t_{j}} (x)\right|_{v}
 = \max_{0\leq i \leq n} \min_{0\leq j \leq m} \left| \frac{g_{i}(x)}{u(x)f(x)h_{j}(x)} \right|_{v}\\[3mm]
& 
\begin{cases}
{\displaystyle \leq C_{v_{0}, B}(u^{-1})  (m+1)^{ \epsilon(v_{0})} \max_{0\leq j \leq m}\{C_{v_{0},B}(b_{j})\} 
\max_{0\leq i \leq n}\{ C_{v_{0},B}(g_{i})\}\frac{1}{|f(x)|_{v}}}\\
\\
{\displaystyle \geq  \frac{1}{\displaystyle C_{v_{0},B}(u) \max_{0\leq j\leq m}\{C_{v_{0}, B}(h_{j})\}
  (n+1)^{ \epsilon(v_{0})} \max_{0\leq i \leq n}\{C_{v_{0},B}(a_{i})\}  }      \frac{1}{|f(x)|_{v}}}.
\end{cases} 
 \end{align*}
 Since the big constant coefficients are $\leq 1$ and $\geq 1$ for all but finitely many $v_{0} \in M_{K}$ respectively,
 we are done.

 \end{proof}

 On projective schemes, local height functions associated with presentations  essentially
 depend only on the closed subschemes.
 
 \begin{prop}[Independence on presentations]\label{prop:indpre}
 Suppose $X$ is projective over $F$.
 Let $Y\subset X_{L}$ be a closed subscheme such that $Y\cap \Ass(X) = \emptyset$.
 Let $\Y$ and $\Y'$ be two presentations of $Y$.
 Then there exists an $M_{K}$-constant $ \gamma$ such that 
 \begin{align*}
 \lambda_{\Y}(x,v) - \gamma(v) \leq \lambda_{\Y'}(x,v) \leq \lambda_{\Y}(x,v) + \gamma(v)
 \end{align*}
 for all $(x,v) \in (X_{L}\setminus Y)(\KK) \times M(\KK)$. 
 \end{prop}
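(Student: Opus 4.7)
The idea is to exploit the projectivity of $X$ to cover $X_L(\KK)$ by finitely many $M_K$-bounded pieces on which both $\lambda_\Y$ and $\lambda_{\Y'}$ admit concrete expressions in terms of local equations of $Y$, and then compare those expressions.

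By \cref{prop:bddcov} applied to $X_L$, I pick an affine open cover $\{U_\alpha\}_{\alpha=1}^N$ of $X_L$ together with $M_K$-bounded families $(B_v^\alpha)_{v\in M(\KK)}$ of subsets of $U_\alpha$ satisfying $\bigcup_\alpha B_v^\alpha = X_L(\KK)$ for every $v$. Refining each $U_\alpha$ by principal opens and using \cref{lem:subod} to update the $M_K$-bounded families, I may further assume that on each $U_\alpha$ every invertible sheaf appearing in $\Y$ and $\Y'$ is trivial and each effective Cartier divisor $D_i$ (from $\Y$) and $D'_j$ (from $\Y'$) is principal: $D_i|_{U_\alpha} = \di(f_i^\alpha)$ and $D'_j|_{U_\alpha} = \di(f'^\alpha_j)$ for non-zero divisors $f_i^\alpha, f'^\alpha_j \in \O(U_\alpha)$. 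Since $Y = \bigcap_i D_i = \bigcap_j D'_j$ as subschemes, the two tuples $(f_i^\alpha)_i$ and $(f'^\alpha_j)_j$ generate the same ideal $\I_Y(U_\alpha) \subset \O(U_\alpha)$.

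Applying \cref{lem:divhtbddset} to each presentation $\D_i$ and $\D'_j$ restricted to $U_\alpha$ and taking the maximum of the finitely many resulting $M_K$-constants, I obtain an $M_K$-constant $\gamma_\alpha$ such that
\begin{align*}
\bigl|\lambda_{\D_i}(x,v) - \log(1/|f_i^\alpha(x)|_v)\bigr| &\leq \gamma_\alpha(v), \\
\bigl|\lambda_{\D'_j}(x,v) - \log(1/|f'^\alpha_j(x)|_v)\bigr| &\leq \gamma_\alpha(v)
\end{align*}
for every $v \in M(\KK)$ and $x \in B_v^\alpha$ not lying on the corresponding divisor. Taking the min over $i$ (resp.\ $j$) and using that both sides pass to $\infty$ precisely when $x \in Y$, I conclude
\[
\lambda_\Y(x,v) = -\log\max_i |f_i^\alpha(x)|_v + O(\gamma_\alpha(v)), \quad \lambda_{\Y'}(x,v) = -\log\max_j |f'^\alpha_j(x)|_v + O(\gamma_\alpha(v))
\]
on $B_v^\alpha \cap (X_L\setminus Y)(\KK)$.

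To compare the two quantities, I write $f'^\alpha_j = \sum_i a_{ji}^\alpha f_i^\alpha$ and $f_i^\alpha = \sum_j b_{ij}^\alpha f'^\alpha_j$ for some $a_{ji}^\alpha, b_{ij}^\alpha \in \O(U_\alpha)$, which is possible because the two tuples generate the same ideal. The (ultra)triangle inequality and the uniform bounds $|a_{ji}^\alpha(x)|_v \leq C_{v_0,B^\alpha}(a_{ji}^\alpha)$, $|b_{ij}^\alpha(x)|_v \leq C_{v_0,B^\alpha}(b_{ij}^\alpha)$ with $v_0 = v|_K$ (which are $M_K$-constants by definition of $M_K$-boundedness) then force the ratio $\max_i |f_i^\alpha(x)|_v / \max_j |f'^\alpha_j(x)|_v$ to be controlled from above and below by $M_K$-constants on $B_v^\alpha$. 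Enlarging $\gamma_\alpha$ to absorb these, I get $|\lambda_\Y(x,v) - \lambda_{\Y'}(x,v)| \leq \gamma_\alpha(v)$ on $B_v^\alpha \cap (X_L\setminus Y)(\KK)$, and taking $\gamma = \max_\alpha \gamma_\alpha$ over the finitely many pieces yields the desired $M_K$-constant.

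The main technical obstacle is the very first step: simultaneously refining the affine cover so that all relevant line bundles trivialize and all relevant divisors become principal, while still preserving an $M_K$-bounded covering of $X_L(\KK)$. This is precisely what \cref{lem:subod} was designed to handle, combined with the fact that effective Cartier divisors are locally principal by definition, so no serious new difficulty arises beyond careful bookkeeping.
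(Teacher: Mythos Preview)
Your proof is correct and follows essentially the same approach as the paper's: cover $X_L(\KK)$ by $M_K$-bounded families via \cref{prop:bddcov} and \cref{lem:subod}, reduce both local heights to $-\log\max_i|f_i(x)|_v$ via \cref{lem:divhtbddset}, and compare using the fact that the two tuples of local equations generate the same ideal. The paper adds one organizational shortcut you omit---it first merges the two presentations into $\Y'' = (Y; \D_1,\dots,\D_r,\E_1,\dots,\E_s)$ so that one inequality ($\lambda_\Y \geq \lambda_{\Y''}$) is immediate from the definition of $\lambda$ as a minimum, and then reduces by induction to adding a single extra divisor---but the core estimate is identical to yours.
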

 \begin{proof}
 Let $\Y=(Y; \D_{1},\dots, \D_{r})$ and $\Y' = (Y; \E_{1},\dots, \E_{s})$.
 Then $\Y'' = (Y; \D_{1},\dots, \D_{r}, \E_{1},\dots, \E_{s})$ is also a presentation of $Y$.
 By replacing $\Y'$ with $\Y''$, we may assume $\Y' = (Y; \D_{1},\dots, \D_{r}, \E_{1},\dots, \E_{s})$.
 By arguing inductively on $s$, we may assume $\Y'=(Y; \D_{1},\dots, \D_{r}, \E)$.
 
 By definition, we have $ \lambda_{\Y} \geq \lambda_{\Y'}$.
 To end the proof, it is enough to show that $ \lambda_{\Y'}\geq \lambda_{\Y}- \gamma$
for some $M_{K}$-constant $ \gamma$. 

\begin{claim}
Let $U \subset X_{L}$ be an open affine subset such that all invertible $\O_{X}$-modules in
$\D_{1},\dots, \D_{r}, \E$ are isomorphic to $\O_{U}$.
Let $B=(B_{v})_{v\in M(\KK)}$ be an $M_{K}$-bounded family of subsets of $U$.
Then there exists an $M_{K}$-constant $ \gamma_{U, B}$ such that
\begin{align*}
\lambda_{\Y'}(x,v) \geq \lambda_{\Y}(x,v) - \gamma_{U, B}(v) 
\end{align*}
for all $v \in M(\KK)$ and $x \in B_{v}\setminus Y(\KK)$.
\end{claim}
\begin{claimproof}
Let $D_{1},\dots , D_{r}, E$ be the effective Cartier divisors presented by $\D_{1},\allowbreak\dots , \D_{r}, \E$.
Let $f_{1},\dots ,f_{r}, g \in \O(U)$ be non-zero divisors such that
\begin{align*}
D_{1}|_{U} = \di (f_{1}), \dots, D_{r}|_{U} = \di (f_{r}), E|_{U} = \di (g).
\end{align*}
Since $\D_{1}, \dots \D_{r}$ already form  a presentation of $Y$, we can write
\begin{align*}
g = \sum_{i=1}^{r}a_{i}f_{i}
\end{align*}
for some $a_{i} \in \O(U)$.
Then for any $v \in M(\KK)$ and $x \in B_{v}$, set $v_{0}=v|_{K}$ and get
\begin{align*}
|g(x)|_{v} \leq r^{ \epsilon(v_{0})} \max_{1\leq i \leq r}\{ C_{v_{0}, B}(a_{i}) \} \max_{1\leq i \leq r}\{  |f_{i}(x)|_{v}\}.
\end{align*}
Therefore, if $x \in B_{v}\setminus Y(\KK)$, we have
\begin{align*}
 \lambda_{\Y'}(x,v) & = \min\{ \lambda_{\D_{1}}(x,v), \dots, \lambda_{\D_{r}}(x,v) , \lambda_{\E}(x,v) \}\\[3mm]
 & \geq \min\left\{ \log \frac{1}{|f_{1}(x)|_{v}}, \dots, \log \frac{1}{|f_{r}(x)|_{v}}, \log \frac{1}{|g(x)|_{v}}  \right\} - \gamma(v) \\[3mm]
&\qquad \qquad \qquad \qquad \qquad \qquad \qquad \qquad {\txt{ for some $M_{K}$-constant $ \gamma$ \\( \cref{lem:divhtbddset})}}\\[3mm]
&\geq \min\left\{ \log \frac{1}{|f_{1}(x)|_{v}}, \dots, \log \frac{1}{|f_{r}(x)|_{v}}, \log \frac{1}{\max_{1\leq i \leq r}\{  |f_{i}(x)|_{v}\}}  \right\} \\[3mm]
&\quad -\log \Bigl(r^{ \epsilon(v_{0})} \max_{1\leq i \leq r}\{ C_{v_{0}, B}(a_{i}) \} \Bigr)- \gamma(v)\\[3mm]
&= \min_{1\leq i \leq r} \left\{ \log \frac{1}{|f_{i}(x)|_{v}} \right\} -\log \Bigl(r^{ \epsilon(v_{0})} \max_{1\leq i \leq r}\{ C_{v_{0}, B}(a_{i}) \} \Bigr)- \gamma(v)\\[3mm]
&\geq  \lambda_{\Y}(x,v) - \gamma'(v)-\log\Bigl(r^{ \epsilon(v_{0})} \max_{1\leq i \leq r}\{ C_{v_{0}, B}(a_{i}) \} \Bigr)- \gamma(v) \\[3mm]
&\qquad \qquad \qquad \qquad \qquad \qquad \qquad \qquad { \txt{ for some $M_{K}$-constant $ \gamma'$ \\( \cref{lem:divhtbddset})}}
\end{align*}
and we are done.

\end{claimproof}

Now, take a finite open affine cover $\{U_{i}\}_{i=1}^{n}$ of $X$ consisting of open affines as in the claim.
By \cref{prop:bddcov}, there are $M_{K}$-bounded family of subsets $B^{i} = (B^{i}_{v})_{v\in M(\KK)}$ of $U_{i}$
such that $\bigcup_{i=1}^{n}B^{i}_{v}=X(\KK)$ for each $v \in M(\KK)$.
Then for any $v \in M(\KK)$ and $x \in (X_{L}\setminus Y)(\KK)$, we have
\begin{align*}
\lambda_{\Y'}(x,v) \geq \lambda_{\Y}(x,v) - \max_{1\leq i \leq n}\{ \gamma_{U_{i}, B^{i}}(v)\}.
\end{align*}

 \end{proof}

\subsection{Local height associated to closed subschemes}
In this section we fix an infinite field $K$ equipped with a proper set of absolute values $M_{K}$.
We fix an algebraic closure $ \overline{K}$ of $K$ and let 
\[
M = M( \overline{K}) = \left\{  |\ |_{v} \middle| \txt{$  |\ |_{v}$ is an absolute value on $ \overline{K}$ such that \\the restriction $  |\ |_{v}|_{K}$ on $K$ is an element of $M_{K}$}\right\}.
\]

We will associate local height functions (up to $M_{K}$-bounded function) to closed subschemes of projective schemes over $\KK$.

\begin{defn}
Let $U$ be a quasi-projective scheme over $\KK$.
Two maps
\[
\lambda_{1}, \lambda_{2} \colon U(\KK) \times M(\KK) \longrightarrow \R
\]
are said to be equal up to $M_{K}$-bounded function if there is an $M_{K}$-constant $ \gamma$ such that
 \begin{align*}
 \lambda_{1}(x,v) - \gamma(v) \leq \lambda_{2}(x,v) \leq \lambda_{1}(x,v) + \gamma(v)
 \end{align*}
 for all $(x,v) \in U(\KK) \times M(\KK)$. 
 Note that this defines an equivalence relation.
 Any function in the equivalence class of the constant map $0$ is called $M_{K}$-bounded function.
 When $ \lambda_{1}$ and $ \lambda_{2}$ are equal up to $M_{K}$-bounded function, we write $ \lambda_{1} = \lambda_{2} + O_{M_{K}}(1)$.

 When there is an $M_{K}$-constant $ \gamma$ such that
 \[
 \lambda_{1}(x,v) \leq \lambda_{2}(x,v)+ \gamma(v)
 \]
 for all $(x,v) \in U(\KK) \times M(\KK)$, we write $ \lambda_{1} \leq \lambda_{2} + O_{M_{K}}(1)$. 
\end{defn}

\begin{rmk}
When the functions are allowed to take value $\infty$, we use the same notation under the convention:
\begin{itemize}
\item $a \leq \infty $ for all $a \in \R$;
\item $\infty \pm a = \infty$ for all $a\in \R$;
\item $\infty + \infty = \infty$
\item $ \infty -\infty =0$. 
\end{itemize}
In particular, 
\begin{enumerate}
\item if $ \lambda_{1} \leq \lambda_{2} + O_{M_{K}}(1)$, then
$ \lambda_{1}(x,v) = \infty $ implies $ \lambda_{2}(x,v) = \infty$.

\item if $ \lambda_{1} = \lambda_{2} + O_{M_{K}}(1)$, then
$ \lambda_{1}(x,v) = \infty $ if and only if $ \lambda_{2}(x,v) = \infty$.
\end{enumerate}

\end{rmk}

\begin{cons}\label{cons:lochtsub}
Let $X$ be a projective scheme over $\KK$.
Let $Y\subset X$ be a closed subscheme such that $Y\cap \Ass(X)=  \emptyset$.
Take an intermediate field $K \subset L \subset \KK$ such that 
\begin{itemize}
\item $[L : K]<\infty$;
\item there is a projective scheme $X_{L}$ over $L$ and a closed subscheme $Y_{L} \subset X_{L}$;
\item there is an isomorphism $(X_{L})_{ \KK} \simeq X$ of $\KK$-schemes by which $(Y_{L})_{\KK}$ is isomorphic to $Y$:
\[
\xymatrix{
(X_{L})_{\KK} \ar[r]^{\sim}  & X \\
(Y_{L})_{\KK} \ar@{^{(}->}[u] \ar[r]^{\sim} & Y \ar@{^{(}->}[u].
}
\]
\end{itemize}
 We identify $(X_{L})_{ \KK}, (Y_{L})_{\KK}$ with $X, Y$ via this isomorphisms.
 Note that since $X \longrightarrow X_{L}$ is flat, we have $Y_{L}\cap \Ass(X_{L})$.
 Thus we can take a presentation $\Y$ of $Y_{L}$ and get a map
 \[
 \lambda_{\Y} \colon (X\setminus Y)(\KK) \times M(\KK) \longrightarrow \R.
 \]
 
 Let $L', X_{L'}, Y_{L'}$ be another such a field and schemes and take a presentation $\Y'$ of $Y_{L'}$.
 Then there is a subfield $L'' \subset \KK$ that contains $L, L'$, is finite over $K$, and
 the isomorphism $(X_{L})_{\KK} \xrightarrow{\sim} X \xrightarrow{\sim} (X_{L'})_{\KK}$ is defined over $L''$. 
 Then we get isomorphisms of $L''$-schemes
 \[
 \xymatrix{
(X_{L})_{L''} \ar[r]^{\sim}  & (X_{L'})_{L''} \\
(Y_{L})_{L''} \ar@{^{(}->}[u] \ar[r]^{\sim} & (Y_{L'})_{L''}  \ar@{^{(}->}[u].
}
 \]
 and identify them by these isomorphisms.
 Then $\Y_{L''}$ and $\Y'_{L''}$ are presentations of the same closed subscheme and therefore
 by \cref{prop:basicprop}(3) and \cref{prop:indpre}, there exists an $M_{K}$-constant $ \gamma$ such that
 \begin{align*}
 \lambda_{\Y}(x,v) - \gamma(v) \leq \lambda_{\Y'}(x,v) \leq \lambda_{\Y}(x,v) + \gamma(v)
 \end{align*}
 for all $(x,v) \in (X\setminus Y)(\KK) \times M(\KK)$. 
 
 Therefore $ \lambda_{\Y}$ and $ \lambda_{\Y'}$ are equal up to $M_{K}$-bounded function.
\end{cons}

\begin{defn}[\bf Local height associated with closed subschemes]
In the notation of \cref{cons:lochtsub}, 
any map $ (X\setminus Y)(\KK) \times M(\KK) \longrightarrow \R$ which is equal to $ \lambda_{\Y}$ up to $M_{K}$-bounded function
 is called a height function associated with $Y$ and denoted by $ \lambda_{Y}$.
 This is well-defined by \cref{cons:lochtsub}.
\end{defn}

\begin{rmk}
Note that $ \lambda_{Y}$ is determined up to $M_{K}$-bounded function by $Y$.
\end{rmk}

\begin{prop}[Good choice of local heights]\label{prop:goodchoice}
Let $K \subset F \subset \KK$ be an intermediate field such that $[F:K]<\infty$.
Let $X$ be a projective scheme over $F$.
Let $Y\subset X$ be a closed subscheme such that $Y\cap \Ass(X)=  \emptyset$.
Then we can take a local height function $ \lambda_{Y_{\KK}}$ associated with $Y_{\KK}$ so that
the following holds.
For any intermediate field $F \subset L \subset \KK$ with $[L:F]<\infty$,
$ \lambda_{Y_{\KK}}$ induces a map $(X_{L}\setminus Y_{L})(L) \times M(L) \longrightarrow \R$:
\[
\xymatrix{
(X_{L}\setminus Y_{L})(L) \times M(\KK) \ar@{^{(}->}[r] \ar[d]_{\id \times (\ )|_{L}} & 
(X_{\KK}\setminus Y_{\KK})(\KK) \times M(\KK) \ar[r]^(.80){ \lambda_{Y_{\KK}}} & \R \\
(X_{L}\setminus Y_{L})(L) \times M(L) \ar@/_{10pt}/[rru]_{\exists} &&.
}
\]
Namely, $ \lambda_{Y_{\KK}}= \lambda_{\Y}$ works for any presentation $\Y$ of $Y$.
\end{prop}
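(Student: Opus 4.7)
The plan is to take $\lambda_{Y_{\KK}} := \lambda_{\Y}$ for some chosen presentation $\Y$ of $Y \subset X$ over $F$ (such a $\Y$ exists by \cref{lem:existpres}) and verify that this choice is simultaneously a legitimate representative of the local height class associated with $Y_{\KK}$ and that it factors through $M(L)$ on $L$-points. Note that $Y \cap \Ass(X) = \emptyset$ propagates to every base change by flatness (as in \cref{lem:prefldext}), so all presentations considered below are well-defined.

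To establish the first point, I would observe that by \cref{lem:prefldext}, base change along $F \subset \KK$ sends $\Y$ to a presentation $\Y_{\KK}$ of $Y_{\KK} \subset X_{\KK}$. Then \cref{prop:basicprop}(3) gives $\lambda_{\Y} = \lambda_{\Y_{\KK}}$ as maps $(X_{\KK}\setminus Y_{\KK})(\KK) \times M(\KK) \to \R$, so $\lambda_{\Y}$ falls into the equivalence class defining $\lambda_{Y_{\KK}}$ in \cref{cons:lochtsub}. This shows any presentation of $Y$ over $F$ gives a legitimate $\lambda_{Y_{\KK}}$.

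For the factoring property, fix an intermediate field $F \subset L \subset \KK$ with $[L:F]<\infty$ and a point $x \in (X_L \setminus Y_L)(L)$. Applying \cref{prop:basicprop}(3) once more identifies $\lambda_{\Y}(x,v)$ with $\lambda_{\Y_L}(x,v)$ for every $v \in M(\KK)$, and \cref{lem:restoratpt} (applied to $\Y_L$ over $L$ at the $L$-point $x$) then guarantees that $\lambda_{\Y_L}(x,v)$ depends on $v$ only through the restriction $v|_L$. Since every element of $M(L)$ extends to at least one element of $M(\KK)$ (absolute values on $L$ always extend to the algebraic closure $\KK$), this locality exhibits the desired factorization $(X_L \setminus Y_L)(L) \times M(L) \to \R$.

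The proof is essentially immediate once the earlier machinery is in place; there is no genuine obstacle. The only point of care is to keep the identifications $(X_L)_{\KK} \simeq X_{\KK}$ and $(Y_L)_{\KK} \simeq Y_{\KK}$ canonical and compatible so that the single presentation $\Y$ (taken once and for all over $F$) can be reinterpreted simultaneously as $\Y_L$ and as $\Y_{\KK}$. With that bookkeeping, \cref{prop:basicprop}(3) makes these reinterpretations give literally the same function, and \cref{lem:restoratpt} supplies exactly the locality in $v$ that the proposition asserts.
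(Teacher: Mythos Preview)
Your proposal is correct and follows the same route as the paper, which simply cites \cref{lem:restoratpt}. Two minor remarks: your first invocation of \cref{prop:basicprop}(3) to identify $\lambda_{\Y}$ with $\lambda_{\Y_{\KK}}$ technically falls outside that proposition's stated hypothesis $[L':L]<\infty$, but the step is also unnecessary---by \cref{cons:lochtsub} with $L=F$, the function $\lambda_{\Y}$ is \emph{by definition} a representative of the local height class of $Y_{\KK}$; and the detour through $\Y_{L}$ in the second step can be skipped, since \cref{lem:restoratpt} applies directly to the $F$-presentation $\Y$ with $L'=L$.
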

\begin{proof}
This follows from \cref{lem:restoratpt}.
\end{proof}

Let us prove basic properties of local height functions.
Let us first recall operations of closed subschemes.
Let $X$ be an algebraic scheme and $Y, W \subset X$ be closed subschemes
and $\I_{Y}, \I_{W}$ be ideals of them.
We define:
\begin{enumerate}
\item
$Y\cap W \subset X$  is the closed subscheme defined by $\I_{Y}+\I_{W}$;
\item
$Y+W \subset X$ is the closed subscheme defined by $\I_{Y}\I_{W}$;
\item
$Y\cup W \subset X$  is the closed subscheme defined by $\I_{Y}\cap\I_{W}$.
\end{enumerate}
Note that
\[
\I_{Y}\I_{W} \subset \I_{Y}\cap \I_{W}
\]
and thus $Y\cup W \subset Y+W \subset X$ as schemes.
Also, $\Supp(Y\cup W)=\Supp(Y+W)$.

\begin{thm}[Basic properties of local heights]\label{thm:basiclocht}
Let $X$ be a projective scheme over $\KK$.
Let $Y, W \subset X$ be closed subschemes such that $Y\cap \Ass(X) = W\cap \Ass(X) =  \emptyset$.
Fix local heights $\lambda_{Y}$ and $ \lambda_{W}$ associated with $Y$ and $W$.
\begin{enumerate}
\item\label{propertyintersec}
Fix local heights $ \lambda_{Y\cap W}$ associated with $Y\cap W$.
Then 
\[
\lambda_{Y\cap W} = \min\{ \lambda_{Y}, \lambda_{W}\} + O_{M_{K}}(1)
\]
on $(X\setminus (Y\cap W))(\KK)\times M(\KK)$.
\item
Fix local heights $ \lambda_{Y+W}$ associated with $Y+W$.
Then 
\[
\lambda_{Y+W} = \lambda_{Y}+\lambda_{W} + O_{M_{K}}(1)
\]
on $(X\setminus (Y\cup W))(\KK)\times M(\KK)$.

\item\label{propertycontain}
If $Y\subset W$ as schemes, then 
\[
\lambda_{Y} \leq \lambda_{W} + O_{M_{K}}(1)
\]
on $(X\setminus Y)(\KK)\times M(\KK)$.

\item
Fix local heights $ \lambda_{Y\cup W}$ associated with $Y\cup W$.
Then
\begin{align*}
&\max\{ \lambda_{Y}, \lambda_{W}\} \leq \lambda_{Y\cup W} +O_{M_{K}}(1) ;\\
&\lambda_{Y\cup W}  \leq \lambda_{Y} + \lambda_{W} + O_{M_{K}}(1)
\end{align*}
on $(X\setminus (Y\cup W))(\KK)\times M(\KK)$.
\item
If $\Supp Y \subset \Supp W$, then there exists a constant $C>0$ such that 
\[
\lambda_{Y} \leq C \lambda_{W} +O_{M_{K}}(1)
\]
on $(X\setminus Y)(\KK)\times M(\KK)$.
\item\label{propertypullback}
Let $ \varphi \colon X' \longrightarrow X$ be a morphism where $X'$ is a projective scheme over $\KK$.
Suppose $\varphi(\Ass(X'))\cap Y=  \emptyset$.
Then we can define $ \lambda_{\varphi^{-1}(Y)}$ ,where $\varphi^{-1}(Y)$ is the scheme theoretic preimage,
and 
\[
\lambda_{Y} \circ (\varphi \times \id)= \lambda_{\varphi^{-1}(Y)} + O_{M_{K}}(1)
\]
on $(X'\setminus \varphi^{-1}(Y))(\KK) \times M(\KK)$.
\end{enumerate}
\end{thm}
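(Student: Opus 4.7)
The plan is to reduce every item to the analogous statement for presentations (\cref{prop:basicprop}) and then transport the result through the equivalence class defining $\lambda_Y,\lambda_W,\ldots$. Using \cref{cons:lochtsub}, I would choose a finite extension $K\subset L\subset\KK$ over which $X$, $Y$, $W$ (and in item (6) also the source $X'$ and the morphism $\varphi$) are all defined, and choose presentations $\Y$ of $Y_L$ and $\W$ of $W_L$. By construction, any representative of $\lambda_Y$ (resp.\ $\lambda_W$) differs from $\lambda_{\Y}$ (resp.\ $\lambda_{\W}$) by an $M_K$-bounded function, and the base change identity of \cref{prop:basicprop}(3) allows me to work over $L$ throughout.

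Items (1) and (2) are immediate: $\Y\cap\W$ and $\Y+\W$ are presentations of $Y_L\cap W_L$ and $Y_L+W_L$, so \cref{prop:basicprop}(1)(2) gives the equalities exactly on the presentation side, and the above reduction produces the claims modulo $O_{M_K}(1)$. For (3), scheme-theoretic containment $Y\subset W$ is the ideal inclusion $\I_W\subset\I_Y$, hence $\I_{Y\cap W}=\I_Y+\I_W=\I_Y$ and $Y\cap W=Y$ as schemes; substituting into (1) gives $\lambda_Y=\min\{\lambda_Y,\lambda_W\}+O_{M_K}(1)$, and hence $\lambda_Y\leq\lambda_W+O_{M_K}(1)$. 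Item (4) follows from the scheme-theoretic inclusions $Y,W\subset Y\cup W$ (since $\I_Y\cap\I_W\subset\I_Y$ and $\subset\I_W$) and $Y\cup W\subset Y+W$ (since $\I_Y\I_W\subset\I_Y\cap\I_W$): applying (3) to the first gives $\max\{\lambda_Y,\lambda_W\}\leq\lambda_{Y\cup W}+O_{M_K}(1)$, and applying (3) to the second together with (2) gives $\lambda_{Y\cup W}\leq\lambda_{Y+W}+O_{M_K}(1)=\lambda_Y+\lambda_W+O_{M_K}(1)$.

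For (5), Noetherianity of $X$ combined with $\Supp Y\subset\Supp W$ yields an integer $n\geq 1$ with $\I_W^n\subset\I_Y$: on each of finitely many affine charts one has $\sqrt{\I_W}\subset\sqrt{\I_Y}$, the latter is finitely generated, and each generator raises to some power inside $\I_Y$, so a uniform $n$ works. Thus $Y$ is scheme-theoretically contained in the closed subscheme cut out by $\I_W^n$, which is the $n$-fold sum $W+\cdots+W$ in the sense of the definition preceding this theorem. Combining (3) with an $(n-1)$-fold iteration of (2) yields $\lambda_Y\leq n\lambda_W+O_{M_K}(1)$, so $C=n$ works.

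Item (6) is where I expect the main technical care to be needed. The scheme-theoretic preimage $\varphi^{-1}(Y)$ has ideal $\varphi^{-1}\I_Y\cdot\O_{X'}$, and the hypothesis $\varphi(\Ass(X'))\cap Y=\emptyset$ forces $\varphi^{-1}(Y)\cap\Ass(X')=\emptyset$, so $\lambda_{\varphi^{-1}(Y)}$ is well-defined. To invoke \cref{prop:basicprop}(4) I need a presentation $\Y$ of $Y_L$ (after enlarging $L$ so that $\varphi$ is also defined over $L$) for which $\varphi^{*}\Y$ is well-defined, meaning none of the sections $s_j^{(i)}$ and $t_k^{(i)}$ appearing in $\Y$ vanish on $\varphi(\Ass(X'_L))$. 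Since $Y_L$ is disjoint from the finite set $\Ass(X_L)\cup\varphi(\Ass(X'_L))$, \cref{lem:existpres}(3) provides effective Cartier divisors cutting out $Y_L$ that avoid these points, and parts (1)(2) of the same lemma promote them to a presentation whose sections also avoid these points. Then \cref{prop:basicprop}(4) gives $\lambda_{\Y}\circ(\varphi\times\id)=\lambda_{\varphi^{*}\Y}$ as functions, and descending both sides to their $M_K$-bounded equivalence classes gives the claimed identity up to $O_{M_K}(1)$. The main obstacle is precisely this construction of a presentation compatible with $\varphi$, which is exactly what \cref{lem:existpres} is designed to handle.
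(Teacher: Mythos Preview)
Your proposal is correct and is exactly the approach the paper takes: the paper's proof is the single line ``This follows from \cref{prop:basicprop,cons:lochtsub},'' and you have spelled out precisely what that entails, including the derivations of (3)--(5) from (1)--(2) via the evident ideal inclusions and the Noetherian bound $\I_W^{\,n}\subset\I_Y$, and the use of \cref{lem:existpres} to produce a presentation for which $\varphi^*\Y$ is well-defined in (6).
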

\begin{proof}
This follows form \cref{prop:basicprop,cons:lochtsub}.
\end{proof}

\section{Arithmetic distance function}
In this section we fix an infinite field $K$ equipped with a proper set of absolute values $M_{K}$.
We fix an algebraic closure $ \overline{K}$ of $K$ and let 
\[
M = M( \overline{K}) = \left\{  |\ |_{v} \middle| \txt{$  |\ |_{v}$ is an absolute value on $ \overline{K}$ such that \\the restriction $  |\ |_{v}|_{K}$ on $K$ is an element of $M_{K}$}\right\}.
\]

\begin{defn}[Arithmetic distance function]
Let $X$ be a projective scheme over $\KK$.
Let $ \Delta_{X} \subset X \times X$ be the diagonal.
Suppose $ \Delta_{X}\cap \Ass(X\times X)=  \emptyset$.
The local height function  $ \lambda_{ \Delta_{X}}$ associated with $ \Delta_{X}$ is called arithmetic distance function on $X$
and denoted by $ \delta_{X}$:
\[
\delta_{X} = \lambda_{ \Delta_{X}} \colon (X\times X \setminus \Delta_{X})(\KK) \times M(\KK) \longrightarrow \R.
\]
Note that this is determined up to $M_{K}$-bounded function.
We set $ \delta_{X}(x,x,v)= \infty$ for $x \in X(\KK)$ and $v \in M(\KK)$.
\end{defn}

\begin{rmk}
If $X$ is reduced and all the irreducible components have $\dim \geq 1$, then 
$ \Delta_{X}\cap \Ass(X\times X)=  \emptyset$.
\end{rmk}

\begin{prop}\label{prop:goodchoicedist}
Let $K \subset F \subset \KK$ be an intermediate field such that $[F:K]<\infty$.
Let $X$ be a projective scheme over $F$.
Suppose $ \Delta_{X}\cap \Ass(X \times X)=  \emptyset$.
Then we can take an arithmetic distance function $ \delta_{X_{\KK}} $ so that
the following holds.
For any intermediate field $F \subset L \subset \KK$ with $[L:F]<\infty$,
$ \delta_{X_{\KK}}$ induces the indicated map:
\[
\xymatrix{
(X\times X \setminus \Delta_{X})(L) \times M(\KK) \ar@{^{(}->}[r] \ar[d]_{\id \times (\ )|_{L}} & 
(X\times X \setminus \Delta_{X})(\KK) \times M(\KK) \ar[r]^(.80){ \delta_{X_{\KK}}} & \R \\
(X\times X \setminus \Delta_{X})(L) \times M(L) \ar@/_{10pt}/[rru]_{\exists} &&.
}
\]
Namely, $ \delta_{X_{\KK}} = \lambda_{ \widetilde{\Delta_{X}}}$ works for any presentation $\widetilde{\Delta_{X}}$ of $ \Delta_{X}$.

\end{prop}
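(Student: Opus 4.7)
The plan is to apply Proposition~\ref{prop:goodchoice} directly to the ambient projective scheme $X\times_F X$ together with the closed subscheme $\Delta_X\subset X\times_F X$. Since $X$ is projective over $F$, so is $X\times_F X$. The diagonal $\Delta_X$ is a closed subscheme of $X\times_F X$ defined already over $F$, since $X$ is separated over $F$. The hypothesis $\Delta_X\cap\Ass(X\times X)=\emptyset$ is precisely the hypothesis on the closed subscheme required in Proposition~\ref{prop:goodchoice}.

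The base change along $F\subset\KK$ identifies $(X\times_F X)_{\KK}$ canonically with $X_{\KK}\times_{\KK}X_{\KK}$, and under this identification $(\Delta_X)_{\KK}$ corresponds to $\Delta_{X_{\KK}}$; in particular taking the diagonal commutes with base change. Therefore Proposition~\ref{prop:goodchoice}, applied to the pair $(X\times_F X,\Delta_X)$, produces a local height function $\lambda_{(\Delta_X)_{\KK}}$ on $((X\times X)_{\KK}\setminus(\Delta_X)_{\KK})(\KK)\times M(\KK)$ with the property that for every intermediate field $F\subset L\subset\KK$ with $[L:F]<\infty$, it descends to a well-defined map $((X\times X)_L\setminus(\Delta_X)_L)(L)\times M(L)\longrightarrow \R$, and the representative $\lambda_{\widetilde{\Delta_X}}$ attached to any presentation $\widetilde{\Delta_X}$ of $\Delta_X$ over $F$ achieves this.

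By the definition of the arithmetic distance function, $\delta_{X_{\KK}}$ is any local height $\lambda_{\Delta_{X_{\KK}}}=\lambda_{(\Delta_X)_{\KK}}$, so the function constructed above is a legitimate choice of $\delta_{X_{\KK}}$, and it has exactly the induced-map property in the diagram. I do not anticipate any substantive obstacle: the entire content is the observation that the diagonal of a projective $F$-scheme is a closed subscheme defined over $F$ and that diagonals commute with base change, after which Proposition~\ref{prop:goodchoice} applies verbatim.
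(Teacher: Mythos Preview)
Your proposal is correct and matches the paper's approach exactly: the paper's proof is the single line ``This follows from \cref{prop:goodchoice},'' and you have simply spelled out why that proposition applies to the pair $(X\times_F X,\Delta_X)$.
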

\begin{proof}
This follows from \cref{prop:goodchoice}.
\end{proof}

\begin{prop}[Basic properties of arithmetic distance function]\label{prop:arithdistbasic}
Let $X$ be a projective scheme over $\KK$ and suppose $ \Delta_{X} \cap (X\times X) = \emptyset$.
Fix an arithmetic distance function $ \delta_{X}$.

\begin{enumerate}
\item {\rm (Symmetry)} There exists an $M_{K}$-constant $ \gamma$ such that
\begin{align*}
| \delta_{X}(x,y,v) - \delta_{X}(y,x,v)| \leq \gamma(v)
\end{align*}
for all $(x,y,v) \in (X\times X \setminus \Delta_{X})(\KK) \times M(\KK)$. 

\item {\rm (Triangle inequality I)} There exists an $M_{K}$-constant $ \gamma$ such that
\begin{align*}
\delta_{X}(x,z,v) + \gamma(v) \geq  \min\{  \delta_{X}(x,y,v), \delta_{X}(y,z,v)  \}
\end{align*}
for all $(x,y,z,v) \in X(\KK)^{3} \times M(\KK)$.

\item {\rm (Triangle inequality II)} Let $Y \subset X$ be a closed subscheme such that $Y\cap \Ass(X) =  \emptyset$.
Fix a local height $ \lambda_{Y}$ associated with $Y$.
Then there is an $M_{K}$-constant $ \gamma$ such that
\begin{align*}
\lambda_{Y}(y,v) + \gamma(v) \geq \min\{ \lambda_{Y}(x,v), \delta_{X}(x,y, v) \}
\end{align*}
for all $(x,y,v) \in X(\KK)^{2} \times M(\KK)$.

\item Let $y \in X(\KK)$. Suppose $y \in X$, as a closed point, is not an associated point of $X$.
Fix a local height function $ \lambda_{y}$ associated with $\{y\}$.
Then there is an $M_{K}$-constant $ \gamma$ such that
\begin{align*}
| \delta_{X}(x,y, v)- \lambda_{y}(x,v)| \leq \gamma(v)
\end{align*}
for all $(x,v)\in (X(\KK)\setminus \{y\})\times M(\KK)$.
 
\end{enumerate}

\end{prop}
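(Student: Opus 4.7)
The plan is to deduce each of the four claims from the basic properties of local heights (\cref{thm:basiclocht}) by choosing an appropriate morphism and exhibiting a scheme-theoretic identity or containment among pulled-back closed subschemes. For (1), I would use the swap automorphism $\sigma\colon X\times X \to X\times X$, $(x,y)\mapsto (y,x)$. Locally on $X=\Spec A$, the ideal of $\Delta_X$ in $A\otimes A$ is generated by $\{a\otimes 1 - 1\otimes a : a\in A\}$, which is visibly stable under $a\otimes b \mapsto b\otimes a$; hence $\sigma^{-1}(\Delta_X)=\Delta_X$ as closed subschemes. The pullback property from \cref{thm:basiclocht} then gives $\delta_X\circ(\sigma\times\id)=\delta_X+O_{M_K}(1)$, which is the desired symmetry.

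For (2), let $q_{ij}\colon X^3 \to X^2$ denote the projection onto the $i$-th and $j$-th factors. On a local chart $X^3=\Spec(A^{\otimes 3})$, the identity
\[
a\otimes 1\otimes 1 - 1\otimes 1\otimes a = (a\otimes 1\otimes 1 - 1\otimes a\otimes 1) + (1\otimes a\otimes 1 - 1\otimes 1\otimes a)
\]
shows $\I_{q_{13}^{-1}(\Delta_X)}\subset \I_{q_{12}^{-1}(\Delta_X)}+\I_{q_{23}^{-1}(\Delta_X)}$, i.e.\ $q_{12}^{-1}(\Delta_X)\cap q_{23}^{-1}(\Delta_X)\subset q_{13}^{-1}(\Delta_X)$ as closed subschemes. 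Applying the containment, intersection, and pullback properties from \cref{thm:basiclocht} in turn yields $\min\{\delta_X(x,y,v),\delta_X(y,z,v)\}\leq \delta_X(x,z,v)+O_{M_K}(1)$. The proof of (3) runs in parallel: with $p_1,p_2\colon X\times X \to X$, the identity $1\otimes a = a\otimes 1 - (a\otimes 1 - 1\otimes a)$ valid for $a\in \I_Y$ gives $p_1^{-1}(Y)\cap \Delta_X\subset p_2^{-1}(Y)$ as closed subschemes, and the same three properties produce $\min\{\lambda_Y(x,v),\delta_X(x,y,v)\}\leq \lambda_Y(y,v)+O_{M_K}(1)$.

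For (4), I would take the graph embedding $i_y\colon X \to X\times X$, $x\mapsto (x,y)$. A direct local computation shows that the generators $a\otimes 1-1\otimes a$ of $\I_{\Delta_X}$ pull back under $i_y$ to $a-a(y)$, which generate $\m_y$; hence $i_y^{-1}(\Delta_X)=\{y\}$ as closed subschemes. Since $i_y(\eta)$ lies in $\Delta_X$ only when $y$ specializes to $\eta$, i.e.\ $\eta=y$ because $y$ is a closed point, the hypothesis $y\notin \Ass(X)$ guarantees $i_y(\Ass(X))\cap \Delta_X=\emptyset$, so the pullback property applies and delivers $\delta_X(x,y,v)=\lambda_y(x,v)+O_{M_K}(1)$. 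The main bookkeeping I expect to need care is the associated-point hypothesis required to apply the pullback property for $q_{ij}$ and $p_i$ in (2) and (3); this should be routine but nontrivial, relying on the flatness of the projection morphisms together with the standing hypothesis $\Delta_X\cap \Ass(X\times X)=\emptyset$ propagating to the relevant products.
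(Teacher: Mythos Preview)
Your proposal is correct and follows essentially the same approach as the paper: for each part you invoke exactly the morphism the paper uses (the swap $\sigma$ for (1), the projections $\pr_{ij}$ for (2), $\pr_1,\pr_2$ for (3), and the embedding $x\mapsto(x,y)$ for (4)) together with the same scheme-theoretic identities/containments, then apply the pullback, intersection, and containment properties of \cref{thm:basiclocht}. The only difference is that you spell out the local-algebra verifications of $\sigma^{-1}(\Delta_X)=\Delta_X$, $q_{12}^{-1}(\Delta_X)\cap q_{23}^{-1}(\Delta_X)\subset q_{13}^{-1}(\Delta_X)$, $p_1^{-1}(Y)\cap\Delta_X\subset p_2^{-1}(Y)$, and $i_y^{-1}(\Delta_X)=\{y\}$, whereas the paper simply asserts them; your anticipated use of flatness of the projections to control associated points is also exactly what the paper does.
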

\begin{proof}
(1) Apply \cref{thm:basiclocht}(\ref{propertypullback}) to the automorphism
\[
\sigma \colon X\times X \longrightarrow X\times X; (x,y) \mapsto (y,x).
\]
Note that $ \sigma^{-1}( \Delta_{X}) = \Delta_{X}$.

(2) Consider the following projections:
\[
\xymatrix{
& X\times X \times X \ar[dl]_{\pr_{12}} \ar[d]_{\pr_{13}} \ar[rd]^{\pr_{23}} &\\
X\times X & X\times X & X\times X.
}
\]
Note that $\pr_{ij}$ are flat and hence we have $\pr_{ij}(\Ass(X\times X \times X)) \subset \Ass(X\times X)$.
Then up to $M_{K}$-bounded functions, we have
\begin{align*}
&\min\{ \delta_{X}(x,y,v), \delta_{X}(y,z,v)\} \\[3mm]
&= \min\{ \lambda_{ \Delta_{X}}(x,y,v), \lambda_{\Delta_{X}}(y,z,v)\}\\[3mm]
&=\min\{ \lambda_{\pr_{12}^{-1}( \Delta_{X})}(x,y,z,v)  , \lambda_{\pr_{23}^{-1}( \Delta_{X})}(x,y,z,v)  \}  
	  \qquad \text{by \cref{thm:basiclocht} (\ref{propertypullback})}  \\[3mm]
&= \lambda_{\pr_{12}^{-1}( \Delta_{X})\cap \pr_{23}^{-1}( \Delta_{X})}(x,y,z,v)
	   \qquad \text{by \cref{thm:basiclocht} (\ref{propertyintersec})}  \\[3mm]
&\leq \lambda_{\pr_{13}^{-1}( \Delta_{X})}(x,y,z,v) 
\qquad  \txt{ $\pr_{12}^{-1}( \Delta_{X})\cap \pr_{23}^{-1}( \Delta_{X}) \subset \pr_{13}^{-1}( \Delta_{X})$ \\
	and \cref{thm:basiclocht}(\ref{propertycontain})} \\[3mm]
& = \lambda_{ \Delta_{X}}(x,z,v) = \delta_{X}(x,z,v).
\end{align*}

(3)
Consider the following diagram:
\[
\xymatrix{
X\times X \ar[d]_{\pr_{1}} \\
\qquad X \supset Y.
}
\]
Since $\pr_{1}$ is flat, we have $\pr_{1}(\Ass(X\times X)) \subset \Ass(X)$.
Then up to $M_{K}$-bounded functions, we have
\begin{align*}
&\min\{ \lambda_{Y}(x,v) , \delta_{X}(x,y,v)\}\\[3mm]
& = \min\{ \lambda_{\pr_{1}^{-1}(Y)}(x,y,v), \lambda_{ \Delta_{X}}(x,y,v)\}\\[3mm]
&= \lambda_{\pr_{1}^{-1}(Y)\cap \Delta_{X}}(x,y,v) 
	&&\text{by \cref{thm:basiclocht} (\ref{propertyintersec})} \\[3mm]
&\leq \lambda_{\pr_{2}^{-1}(Y)} (x,y,v) 
	&&\txt{ $\pr_{1}^{-1}(Y) \cap \Delta_{X} \subset \pr_{2}^{-1}(Y)$\\
	 and  \cref{thm:basiclocht}(\ref{propertycontain})}\\[3mm]
& = \lambda_{Y}(y,v).
\end{align*}

(4)
Consider the embedding 
\[
i \colon X \longrightarrow X \times X; x \mapsto (x,y).
\]
Since $y \notin \Ass(X)$, $i(\Ass(X))\cap \Delta_{X} =  \emptyset$.
Thus by  \cref{thm:basiclocht} (\ref{propertypullback}), up to $M_{K}$-bounded function, we have
\[
\delta_{X}(x,y,v) = \lambda_{ \Delta_{X}}(x,y,v) = \lambda_{i^{-1}( \Delta_{X})}(x,v) = \lambda_{y}(x,v).
\]

\end{proof}

\section{Local heights on quasi-projective schemes}

On quasi-projective schemes, a closed subscheme does not necessarily determine a local height function up to $M_{K}$-bounded
function.
However, it is still possible to attach a local height function up to local height function of ``boundary''. 

\subsection{Good projectivization}
In this subsection we work over a field $k$.

\begin{defn}
Let $X$ be a quasi-projective scheme over $k$.
A good projectivization of $X$ is a projective scheme $\Xbar$ over $k$ and an open immersion
\[
i \colon X \longrightarrow \Xbar
\]
over $k$ such that $\Ass(\Xbar) \subset X$.
\end{defn}
\begin{rmk}
Good projectivization is compatible with base change.
That is, let $k\subset k'$ be arbitrary field extension.
Let $X$ be a quasi-projective scheme over $k$ and $\Xbar$ be a projective scheme over $k$.
Let $i \colon X \longrightarrow \Xbar$ be a morphism over $k$.
Then $i$ is a good projectivization if and only if the base change $i_{k'} \colon X_{k'} \longrightarrow \Xbar_{k'}$
is a good projectivization.
Indeed, the equivalence of being open immersion follows from fpqc descent.
Let $p \colon \Xbar_{k'} \longrightarrow \Xbar$ be the projection.
Since $p$ is flat, we have $p(\Ass(\Xbar_{k'}))= \Ass(\Xbar)$. 
Thus $\Ass(\Xbar) \subset X$ if and only if $\Ass(\Xbar_{k'}) \subset p^{-1}(X) = X_{k'}$. 
\end{rmk}

Note that the condition $\Ass(\Xbar) \subset X$ implies $X$ is dense in $\Xbar$.
Let $Y \subset X$ be a closed subscheme such that $Y\cap \Ass(X)=  \emptyset$.
For any good projectivization $X \subset \Xbar$ and
any closed subscheme $ \widetilde{Y} \subset \Xbar$ such that $ \widetilde{Y}\cap X= Y$,
we have $ \widetilde{Y} \cap \Ass(\Xbar) =  \emptyset$.

For a given quasi-projective scheme $X$ and an open immersion $i \colon X \longrightarrow \Xbar$
into a projective scheme, there are two ways to modify $\Xbar$ so that it becomes a good projectivization.

\begin{lem}\label{lem:goodpro}
Let $X$ be a quasi-projective scheme over $k$.
Let $i \colon X \longrightarrow \Xbar$ be an open immersion into a projective scheme $\Xbar$
such that $X$ is dense in $\Xbar$.
\begin{enumerate}
\item\label{lem:goodpro:clsub} There is a closed subscheme $j \colon \Xbar_{0} \subset \Xbar$ such that 
	\begin{enumerate}
	\item $j$ is the identity on the underlying topological spaces;
	\item $j$ is isomorphic on $X$;
	\item $\Ass(\Xbar_{0}) \subset X$.
	\end{enumerate}
	In particular, $\Xbar_{0}$ is a good projectivization of $X$.
\item  Let $Z \subset \Xbar$ be a closed subscheme with support $\Xbar \setminus X$.
Let $\pi \colon \Xbar_{1} \longrightarrow \Xbar$ be the blow-up of $\Xbar$ along $Z$.
Then 
	\begin{enumerate}
	\item $\pi^{-1}(X)$ is dense in $\Xbar_{1}$ and isomorphic to $X$;
	\item $\pi^{-1}(Z) \subset \Xbar_{1}$ is an effective Cartier divisor. 
	\end{enumerate}
	In particular, $\Xbar_{1}$ is a good projectivization of $X$.
\end{enumerate}
\end{lem}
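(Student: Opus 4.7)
The plan is to handle the two parts in parallel: for (1) I would construct $\Xbar_0$ by killing the embedded components whose associated points lie outside $X$, and for (2) I would rely on the universal property of the blow-up.

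For part (1), I would take the ideal sheaf $\mathcal{I}=\ker(\mathcal{O}_{\Xbar}\to j_{*}\mathcal{O}_{X})$, where $j\colon X\hookrightarrow\Xbar$ is the open immersion, and define $\Xbar_{0}$ as the closed subscheme cut out by $\mathcal{I}$. Since the kernel of a morphism of sheaves of rings is automatically a sheaf of ideals, this makes sense. The key local claim is that on an affine open $\Spec A\subset\Xbar$ with $U=X\cap\Spec A$ and a minimal primary decomposition $(0)=\bigcap_{i}Q_{i}$ of the zero ideal (with $P_{i}=\sqrt{Q_{i}}$), one has $\mathcal{I}(\Spec A)=\bigcap_{P_{i}\in U}Q_{i}$. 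This reduces to the statement that $a\in A$ restricts to zero on $U$ iff $a/1=0$ in $A_{P}$ for every $P\in U$, which by the primary decomposition localized at $P$ is equivalent to $a\in Q_{i}$ for every $i$ with $P_{i}\subset P$; since $X$ is open, $P\in X$ forces every generalization $P_{i}$ of $P$ to lie in $X$ too, giving the desired identity. Property (c) is now immediate: $\Ass(A/\mathcal{I})\subset\{P_{i}:P_{i}\in U\}\subset X$. Property (b) is built into the construction of $\mathcal{I}$ as a kernel, so $\Xbar_{0}\cap X=X$. Property (a) follows from density: since $X$ is open and dense, every minimal prime of $A$ corresponds to a generic point of an irreducible component of $\Xbar$ that meets $X$, hence lies in $U$; thus $\sqrt{\mathcal{I}}=\bigcap_{P_{i}\in U}P_{i}$ is sandwiched between the nilradical and itself, so $V(\mathcal{I})=\Spec A$ set-theoretically.

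For part (2), both claims are essentially standard blow-up theory. Since $\mathrm{supp}(Z)=\Xbar\setminus X$, the ideal sheaf of $Z$ restricts to the unit ideal on $X$, so $\pi\colon\Xbar_{1}\to\Xbar$ is an isomorphism over $X$; this gives the isomorphism $\pi^{-1}(X)\cong X$. The universal property of the blow-up directly produces an effective Cartier divisor $\pi^{-1}(Z)\subset\Xbar_{1}$. For density of $\pi^{-1}(X)=\Xbar_{1}\setminus\pi^{-1}(Z)$, I would use the general principle that an effective Cartier divisor is locally cut out by a non-zero-divisor, and non-zero-divisors lie in no associated prime; hence $\Ass(\Xbar_{1})\subset\pi^{-1}(X)$, which simultaneously yields density of $\pi^{-1}(X)$ and the condition $\Ass(\Xbar_{1})\subset X$ needed for a good projectivization. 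Projectivity of $\Xbar_{1}$ over $k$ is standard because blow-ups of projective schemes along closed subschemes are projective.

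The main obstacle is the local identification of $\mathcal{I}$ with the intersection of primary components in part (1); the three properties are then easy corollaries of primary decomposition combined with the fact that open subsets are stable under generalization. Part (2) reduces almost entirely to invoking the universal property of the blow-up together with the observation that associated points avoid the support of any effective Cartier divisor.
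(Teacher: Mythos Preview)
Your proposal is correct and follows essentially the same strategy as the paper. For part~(1), the paper works affine-locally, defining on each $\Spec A$ the ideal $\mathfrak{a}=\bigcap_{P_i\in X}\mathfrak{q}_i$ (the intersection of those primary components of $(0)$ whose associated primes lie in $X$), and then invokes the second uniqueness theorem for primary decomposition to glue these local ideals into a coherent sheaf; you instead define $\mathcal{I}=\ker(\mathcal{O}_{\Xbar}\to j_*\mathcal{O}_X)$ globally and then verify the same local description. Your global definition is a modest simplification, since the gluing is automatic and the second uniqueness theorem is not needed explicitly, but the resulting closed subscheme $\Xbar_0$ is identical in both arguments. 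For part~(2), the paper simply says ``follows from basic properties of blow-ups''; your write-up spells out exactly those properties (isomorphism over the complement of the center, the exceptional locus being Cartier, and associated points avoiding effective Cartier divisors), so the content is the same.
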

\begin{proof}

(1)
First take an open affine subset $U = \Spec A \subset \Xbar$.
Let $I$ be the radical ideal such that $X\cap U = U \setminus V(I)$, where $V(I)$ is the set of primes containing $I$.
Let 
\[
0 = \q_{1}\cap \cdots \cap \q_{r}
\]
be a minimal primary decomposition of $0$.
By changing labels, we may assume $I \not\subset \sqrt{\q_{i}}$ for $i=1,\dots, s$ and $I \subset \sqrt{\q_{j}}$ for $j=s+1,\dots, r$.
Then the ideal
\[
\a = \q_{1}\cap \cdots \cap \q_{s}
\]
is independent of the choice of minimal primary decomposition by the 2nd uniqueness theorem \cite[Theorem 4.10]{am}.
Since $X$ is dense in $\Xbar$, $ \sqrt{\q_{j}}$, $j=s+1,\dots, r$ are not minimal primes.
Thus $\a$ is a nilpotent ideal.
Set $U_{0} = \Spec A/\a$.
By the uniqueness of $\a$, this construction glue together and define desired $\Xbar_{0}$.

(2)
This follows from basic properties of blow-ups.
\end{proof}

\begin{lem}\label{lem:prodgp}
Let $X, X'$ be quasi-projective schemes over $k$.
Let
\[
i \colon X \longrightarrow \Xbar,\quad j \colon X' \longrightarrow \Xbar'
\]
be good projectivizations.
Then the product
\[
i \times j \colon X \times X' \longrightarrow \Xbar \times \Xbar'
\]
is also a good projectivization.
\end{lem}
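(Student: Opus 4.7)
The plan is to verify the three conditions in the definition of good projectivization for $i\times j$: that $i\times j$ is an open immersion, that $\bar X\times \bar X'$ is projective, and that $\operatorname{Ass}(\bar X\times \bar X')\subset X\times X'$. The first two are standard: the product of two open immersions is an open immersion, and the product of two projective $k$-schemes is projective (compose the product of the given closed immersions $\bar X\hookrightarrow \mathbb P^{N}_k$ and $\bar X'\hookrightarrow \mathbb P^{N'}_k$ with the Segre embedding $\mathbb P^{N}_k\times \mathbb P^{N'}_k\hookrightarrow \mathbb P^{(N+1)(N'+1)-1}_k$).

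The only nontrivial point is the associated-point containment, and this is where I would spend the work. The key observation is that the two projections
\[
\pi_1\colon \bar X\times \bar X'\longrightarrow \bar X,\qquad \pi_2\colon \bar X\times \bar X'\longrightarrow \bar X'
\]
are flat, since each is the base change along $\bar X\to\operatorname{Spec} k$ (resp.\ $\bar X'\to\operatorname{Spec} k$) of a morphism whose source is flat over $k$ (every $k$-module is flat). Then I would invoke the standard fact (e.g.\ Matsumura, \emph{Commutative Ring Theory}, Thm.~23.2) that for a flat morphism $f\colon Y\to Z$ of locally Noetherian schemes, if $p\in \operatorname{Ass}(Y)$ then $f(p)\in \operatorname{Ass}(Z)$. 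Applying this to both projections, for any $p\in \operatorname{Ass}(\bar X\times \bar X')$ we obtain
\[
\pi_1(p)\in \operatorname{Ass}(\bar X)\subset X,\qquad \pi_2(p)\in \operatorname{Ass}(\bar X')\subset X',
\]
where the containments use that $i$ and $j$ are good projectivizations. Hence $p\in \pi_1^{-1}(X)\cap \pi_2^{-1}(X')=X\times X'$, giving $\operatorname{Ass}(\bar X\times \bar X')\subset X\times X'$ as required.

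The main obstacle is really just the associated-point computation; once one has the flatness-preserves-associated-points lemma in hand, everything assembles directly. If one wanted a self-contained proof avoiding that black box, one could instead argue locally: on an affine open $\operatorname{Spec}A\times \operatorname{Spec}B\subset \bar X\times \bar X'$, every associated prime of $A\otimes_k B$ contracts to an associated prime of $A$ and of $B$ (this follows because the natural maps $A\to A\otimes_k B$ and $B\to A\otimes_k B$ are flat, whence zero-divisors in $A\otimes_k B$ restrict to zero-divisors in $A$ and $B$, or one can argue via primary decomposition using that $k$ is a field). This gives the same conclusion and completes the proof.
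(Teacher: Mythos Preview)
Your proof is correct and follows essentially the same approach as the paper: both use that the projections $\pi_1,\pi_2$ are flat, invoke that flat morphisms send associated points to associated points, and conclude $\Ass(\Xbar\times\Xbar')\subset \pi_1^{-1}(X)\cap\pi_2^{-1}(X')=X\times X'$. Your version is simply more detailed (explicit justification of flatness, a reference for the associated-points lemma, the Segre embedding for projectivity, and an optional self-contained alternative), but the argument is the same.
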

\begin{proof}
Since $i, j$ are open immersions, so is $i\times j$.
Since the projection $\pr_{i}, i=1,2$ from $\Xbar \times \Xbar'$ to each factor is flat, 
we have
\begin{align*}
\Ass(\Xbar \times \Xbar') & \subset \pr_{1}^{-1}(\Ass(\Xbar)) \cap \pr_{2}^{-1}(\Ass(\Xbar')) \\
& \subset \pr_{1}^{-1}(X) \cap \pr_{2}^{-1}(X') = X \times X'.
\end{align*}
\end{proof}

\subsection{Meromorphic functions and meromorphic sections}

In this subsection, we briefly recall the definition of the sheaf of meromorphic functions and meromorphic sections of invertible sheaves
on Noetherian schemes (cf.\ \cite[7.1.1]{liu} for the sheaf of meromorphic functions).

\subsubsection{The sheaf of meromorphic functions}

\begin{defn}
Let $X$ be a Noetherian scheme.
\begin{enumerate}
\item For any open subset $U \subset X$, define 
\[
\cR_{X}(U) := \left\{ a \in \O_{X}(U) \middle| \txt{the germ $a_{x} \in \O_{X,x}$ at any point\\ $x \in U$ is a non-zero divisor}  \right\}.
\]
This defines a sheaf $\R_{X}$ on $X$.

\item
For any open subset $U \subset X$, define 
\[
\K_{X}'(U) := \cR_{X}(U)^{-1}\O_{X}(U).
\]
This defines a presheaf $\K_{X}'$ of rings on $X$.

\item
The sheafification of  $\K_{X}'$ is denoted by $\K_{X}$ and is called the sheaf of meromorphic functions on $X$.
\end{enumerate}
\end{defn}

\begin{prop}
Let $X$ be a Noetherian scheme.
\begin{enumerate}
\item We have $\O_{X} \subset \K_{X}' \subset \K_{X}$ as presheaves on $X$.
\item For any $x \in X$, we have $\K_{X,x}' \simeq \K_{X,x} \simeq \Frac \O_{X,x}$ as $\O_{X,x}$-algebras.
\item Let $U\subset X$ be an open subset such that $\Ass(X) \subset U$.
Let $i \colon U \longrightarrow X$ be the inclusion.
Then we have
\begin{align*}
& \text{$\O_{X} \longrightarrow i_{*}\O_{U}$  is injective;}\\
& \text{$\K_{X} \longrightarrow i_{*}\K_{U}$  is isomorphic.}
\end{align*}
In particular, we have $\K_{X}(X) \simeq \K_{X}(U)$ by the restriction.

\item Let $U \subset X$ be an open affine subset.
Then we have
\[
\K_{X}(U) = \Frac \O_{X}(U).
\]
\end{enumerate}
\end{prop}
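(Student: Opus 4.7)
The natural order is to establish (2) first, since the stalk computation is the engine of everything else; then (1), (3), (4) follow.

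For (2), I use that localization commutes with filtered colimits to obtain
\[
\K_{X,x}' = \varinjlim_{V \ni x} \cR_X(V)^{-1}\O_X(V) = S^{-1}\O_{X,x},
\]
where $S = \varinjlim_{V \ni x} \cR_X(V)$, and identify $S$ with the set $T$ of non-zero-divisors of $\O_{X,x}$. The inclusion $S \subset T$ is immediate from the definition of $\cR_X$. For $T \subset S$, I lift a non-zero-divisor $a_x \in \O_{X,x}$ to a section $a \in \O_X(V)$ on some $V \ni x$; since $X$ is Noetherian, $\Ass(V) = \Ass(X) \cap V$ is finite, so I shrink $V$ to exclude those $\eta \in \Ass(V)$ with $x \notin \overline{\{\eta\}}$. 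For the remaining $\eta$ (those specializing to $x$), $\O_{X,\eta}$ is the localization of $\O_{X,x}$ at an associated prime of $\O_{X,x}$, which cannot contain $a_x$, so $a$ has unit germ at $\eta$. Then $a \in \cR_X(V')$ on the shrunken $V'$, giving $a_x \in S$. Sheafification preserves stalks, hence $\K_{X,x} = \Frac \O_{X,x}$ as well.

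For (1), the embedding $\O_X \hookrightarrow \K_X'$ is $s \mapsto s/1$, injective because $1 \in \cR_X(V)$. For $\K_X' \hookrightarrow \K_X$, if $s/t \in \K_X'(V)$ becomes zero in $\K_X(V)$ then it is locally zero, hence vanishes at every stalk; by (2), $s_x/t_x = 0$ in $\Frac \O_{X,x}$ forces $s_x = 0$ for every $x$, and so $s = 0$ in $\O_X(V)$. For the injectivity in (3), I cover by open affines $V_i$ and reduce to the classical fact that $A \to \prod_{\p \in \Ass A} A_\p$ is injective for Noetherian $A$; the hypothesis $\Ass(X) \subset U$ guarantees $\Ass(V_i) \subset V_i \cap U$, so a section vanishing on $V \cap U$ vanishes at every associated point. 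For the isomorphism $\K_X \xrightarrow{\sim} i_*\K_U$, I check stalkwise: at $x \in U$ it is tautological, and at $x \notin U$ both sides are identified with $\Frac \O_{X,x}$ via (2) together with (4).

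For (4), I first compute $\cR_V(V) = \{a \in A : a \text{ is a non-zero-divisor of } A\}$ for $V = \Spec A$: the inclusion $\supset$ uses flatness of localization, and $\subset$ uses that a zero-divisor lies in some $\p \in \Ass A$ and becomes a zero-divisor in $A_\p$ (because $\p = \Ann(b)$ with $b/1 \neq 0$ in $A_\p$). Hence $\K_V'(V) = \Frac A$, and it remains to pass to $\K_V(V) = \Frac A$. This follows from the fact that on a Noetherian scheme $\K_V$ is quasi-coherent with associated $A$-module $\Frac A$, so $\Gamma(V, \K_V) = \Frac A$; alternatively, one argues that the natural map $\Frac A = \K_V'(V) \to \K_V(V)$ is injective by (1) and surjective by the sheaf axiom combined with the stalk identification (2). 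The main obstacle throughout is the isomorphism $\K_X \to i_*\K_U$ at stalks outside $U$, which requires carefully matching the filtered colimit defining the push-forward stalk with $\Frac \O_{X,x}$; this is most cleanly done after (4), by using an affine neighborhood $V$ of $x$ and observing that $V \cap U$ still contains $\Ass(V)$ so that both $\K_V(V)$ and $\K_V(V \cap U)$ compute to $\Frac \O_V(V)$.
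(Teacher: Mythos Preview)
The paper gives no proof of its own here; it simply refers to \cite[7.1.1]{liu}. Your outline is essentially the standard argument found there, and you are right that the stalk computation (2) is the engine driving everything else.

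One point deserves more care. In (4), the passage from $\K'_V(V)=\Frac A$ to $\K_V(V)=\Frac A$ via ``sheaf axiom combined with (2)'' implicitly uses the identity $\Frac(A_f)=(\Frac A)_f$ for every principal open $D(f)\subset V=\Spec A$: this is what makes the presheaf $\K'_X$ agree with the quasi-coherent sheaf $\widetilde{\Frac A}$ on the basis of principal opens, so that sheafification does nothing on affines. That identity is itself a consequence of the associated-prime description of non-zero-divisors underlying (2), but it is not automatic and should be isolated. Without it, your argument for (3) at points $x\notin U$ is in danger of circularity: the claim ``$\K_V(V\cap U)$ computes to $\Frac\O_V(V)$'' is precisely the global-sections statement of (3) applied to the affine scheme $V$ with the open subset $V\cap U$, which is what you are trying to prove. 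Once $\Frac(A_f)=(\Frac A)_f$ is established, $\K_X|_{\Spec A}\simeq\widetilde{\Frac A}$ follows, and then both $\K_V(V)$ and $\K_V(V\cap U)$ are computed as sections of $\widetilde{\Frac A}$; since $\Ass_A(\Frac A)=\Ass(A)\subset V\cap U$, the restriction map is an isomorphism and your sketch for (3) goes through.
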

\begin{proof}
See \cite[7.1.1]{liu}.
\end{proof}

\subsubsection{Cartier divisors and meromorphic sections}\label{subsubsec:divandsec}

\begin{defn}
Let $X$ be a Noetherian scheme.
A Cartier divisor on $X$ is a global section of the sheaf $\K_{X}^{\times}/\O_{X}^{\times}$.
Note that a Cartier divisor is represented by a set of pairs $\{(U_{i}, f_{i})\}_{i}$ where
$\{U_{i}\}_{i}$ is an open cover of $X$, $f_{i} \in \K_{X}^{\times}(U_{i})$, and 
$f_{i}/f_{j} \in \O_{X}^{\times}(U_{i}\cap U_{j})$.
\end{defn}

Let $D$ be a Cartier divisor on a Noetherian scheme $X$.
Take a representation $\{(U_{i}, f_{i})\}_{i}$ of $D$.
Define
\[
\I_{D} := \text{the $\O_{X}$-submodule of $\K_{X}$ generated by $f_{i}$'s}
\]
and 
\[
\O_{X}(D):=\sHom_{\O_{X}}(\I_{D}, \O_{X}).
\]
Note that $\I_{D}$ and $\O_{X}(D)$ are invertible $\O_{X}$-modules.
Note that $\O_{X}(D)$ can be embedded into $\K_{X}$ canonically by 
\[
\O_{X}(D)|_{U_{i}} \longrightarrow \K_{X}|_{U_{i}}, \quad  \varphi \mapsto \frac{\varphi(f_{i})}{f_{i}}.
\]
Now, let $U\subset X$ be the largest open subset of $X$ such that $\I_{X}|_{U} \subset \O_{U}$.
Define 
\[
s_{D} \in H^{0}(U, \O_{X}(D)) = \Hom_{\O_{U}}(\I_{D}|_{U}, \O_{U})
\]
to be the inclusion $\I_{D}|_{U} \longrightarrow \O_{U}$.
Note that
\begin{itemize}
\item $\Ass(X) \subset U$;
\item $s_{D}$ is a regular section of $\O_{X}(D)$ on $U$.
\end{itemize}

On the other hand, let $\L$ be an invertible $\O_{X}$-module on $X$ and
$U\subset X$ be an open subset such that $\Ass(X) \subset U$.
Let $s \in H^{0}(U, \L)$ be a regular section.
Then we can construct a Cartier divisor in the following way.
Take an open cover $\{U_{i}\}_{i}$ of $X$ such that 
\[
\varphi_{i} \colon \L|_{U_{i}} \xrightarrow{\sim} \O_{U_{i}}.
\]
Let $f_{i} \in \K_{X}^{\times}(U_{i})$ be the image of $s|_{U\cap U_{i}}$ by
\[
H^{0}(U\cap U_{i}, \L) \xrightarrow{\sim} H^{0}(U\cap U_{i}, \O_{X}) \subset \K_{X}(U\cap U_{i}) = \K_{X}(U_{i}).
\]
Note that since $s$ is a regular section, $f_{i}$ is a unit element.
The isomorphism $\varphi_{j}\circ \varphi_{i}^{-1} \colon \O_{U_{ij}} \longrightarrow \O_{U_{ij}}$, 
where $U_{ij} = U_{i}\cap U_{j}$, is a multiplication by a unit $u_{ij} \in \O_{U_{ij}}^{\times}(U_{ij})$.
Thus we get $f_{j}/f_{i}=u_{ij}$ as elements in $\K_{X}(U_{ij})$.
Thus $\{ (U_{i}, f_{i}) \}_{i}$ defines a Cartier divisor $D$ on $X$.
By the constructions, we have
\begin{align*}
& \L \simeq \O_{X}(D);\\
& H^{0}(U, \L) \simeq H^{0}(U, \O_{X}(D)), \quad s \longleftrightarrow s_{D}|_{U}.
\end{align*}

\subsection{Boundary functions}

In this subsection we fix an infinite field $K$ equipped with a proper set of absolute values $M_{K}$.
We fix an algebraic closure $ \KK$ of $K$.

\begin{defn}
Let $X$ be a quasi-projective scheme over $K$ or $\KK$.
Let $ \lambda_{1}$, $ \lambda_{2}$, and $ \lambda_{3}$ be maps from $X(\KK) \times M(\KK)$ to $\R$.
\begin{enumerate}
\item We write 
\[
\lambda_{1} \ll \lambda_{2}
\]
if there is a positive real number $C>0$ such that
\[
\lambda_{1}(x,v) \leq C\lambda_{2}(x,v) 
\]
for all $(x,v) \in X(\KK) \times M(\KK)$.

\item
We write
\[
\lambda_{1} \gl \lambda_{2}
\]
if $ \lambda_{1} \ll \lambda_{2}$ and $ \lambda_{2} \ll \lambda_{1}$.

\item We write 
\[
\lambda_{1} = \lambda_{2} + O( \lambda_{3})
\]
if 
\[
| \lambda_{1} - \lambda_{2}| \ll  \lambda_{3}.
\]
\end{enumerate}
\end{defn}

\begin{rmk}
We say $ \lambda_{1} \ll \lambda_{2}$ up to $M_{K}$-bounded function or write $ \lambda_{1} \ll \lambda_{2} + O_{M_{K}}(1)$
if there is a positive constant $C>0$ and an $M_{K}$-constant $ \gamma$ such that
\[
\lambda_{1}(x,v) \leq C( \lambda_{2}(x,v) + \gamma(v) )
\]
for all $(x,v) \in X(\KK) \times M(\KK)$.
The same is applied to $ \lambda_{1} \gl \lambda_{2}$.
\end{rmk}

\begin{lem}\label{lem:pullbackbdfcproper}
Let $\varphi \colon X' \longrightarrow X$ be a proper morphism between quasi-projective schemes over $\KK$.
Let 
\[
i \colon X \longrightarrow \Xbar, \quad j \colon X' \longrightarrow \Xbar'
\]
be good projectivizations.
Let $Z \subset \Xbar$ and $Z' \subset \Xbar'$ be closed subschemes such that
\[
\Supp Z = \Xbar \setminus X, \quad \Supp Z' = \Xbar' \setminus X'.
\]
Fix local height functions $ \lambda_{Z}$ and $ \lambda_{Z'}$.
Then we have
\[
\lambda_{Z} \circ ((i\circ \varphi) \times \id) \gl \lambda_{Z'} \circ (j \times \id) 
\]
on $X'(\KK) \times M(\KK)$ up to $M_{K}$-bounded function.
\end{lem}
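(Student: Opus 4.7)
The plan is to build a single projective scheme $\widetilde{X}$ that dominates both $\Xbar$ and $\Xbar'$ and restricts to a good projectivization of $X'$, and then compare the two pull-backs of the boundary subschemes on $\widetilde{X}$, which I will arrange to have the same support.

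First I would take $\widetilde{X} \subset \Xbar' \times \Xbar$ to be the Zariski closure of the graph $\Gamma_{i \circ \varphi}$ of the composition $i \circ \varphi \colon X' \to \Xbar$, and let $p \colon \widetilde{X} \to \Xbar'$ and $q \colon \widetilde{X} \to \Xbar$ be the two projections. The crucial geometric input from properness of $\varphi$ is that $\Gamma_{i\circ\varphi}$ is closed not only in $X' \times \Xbar$ (which is automatic since $\Xbar$ is separated) but also in $\Xbar' \times X$. Indeed, the graph morphism $(\id, \varphi) \colon X' \to \Xbar' \times X$ satisfies $\pr_2 \circ (\id, \varphi) = \varphi$, where $\varphi$ is proper and $\pr_2 \colon \Xbar' \times X \to X$ is separated, so $(\id, \varphi)$ itself is proper and hence closed. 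Combining these two closedness statements gives $p^{-1}(X') = q^{-1}(X) = \Gamma_{i\circ\varphi}$ inside $\widetilde{X}$, and I identify this common preimage with $X'$.

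Next I would apply \cref{lem:goodpro}(1) to replace $\widetilde{X}$ by a closed subscheme $\widetilde{X}_0 \subset \widetilde{X}$ with the same underlying topological space that is a good projectivization of $X'$. The projections $p$ and $q$ restrict to morphisms $\widetilde{X}_0 \to \Xbar'$ and $\widetilde{X}_0 \to \Xbar$ extending $j$ and $i \circ \varphi$. Since $\Ass(\widetilde{X}_0) \subset X'$, while $p(X') = X'$ avoids $\Supp Z'$ and $q(X') = \varphi(X') \subset X$ avoids $\Supp Z$, the pull-backs $p^{-1}(Z')$ and $q^{-1}(Z)$ are well-defined closed subschemes of $\widetilde{X}_0$ disjoint from its associated points, and both carry local height functions.

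Finally I invoke \cref{thm:basiclocht}. By Step~1 the supports agree:
\[
\Supp q^{-1}(Z) = \widetilde{X}_0 \setminus q^{-1}(X) = \widetilde{X}_0 \setminus X' = \widetilde{X}_0 \setminus p^{-1}(X') = \Supp p^{-1}(Z'),
\]
so part~(5) of \cref{thm:basiclocht}, applied in both directions, yields $\lambda_{q^{-1}(Z)} \gl \lambda_{p^{-1}(Z')}$ on the complement of this common support, up to an $M_K$-bounded function. Part~(6) of the same theorem, applied to the morphisms $p$ and $q$ of projective schemes, identifies the restrictions of these local heights to $X'(\KK) \times M(\KK)$ with $\lambda_{Z'} \circ (j \times \id)$ and $\lambda_Z \circ ((i \circ \varphi) \times \id)$ respectively, up to $M_K$-bounded functions. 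Combining these two comparisons gives the desired $\gl$-equivalence. I expect the main obstacle to be the closedness of $\Gamma_{i\circ\varphi}$ in $\Xbar' \times X$ in Step~1; this is precisely where the hypothesis that $\varphi$ is proper (rather than merely a morphism of quasi-projective schemes) is essential, and without it the equality of supports on $\widetilde{X}_0$ — hence the whole argument — would fail.
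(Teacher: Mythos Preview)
Your proof is correct and follows essentially the same route as the paper: form the closure of the graph of $i\circ\varphi$ in $\Xbar'\times\Xbar$, pass to a good projectivization of $X'$ via \cref{lem:goodpro}(\ref{lem:goodpro:clsub}), use properness of $\varphi$ to conclude that the two boundary pull-backs have equal support, and then invoke the basic properties of local heights. The only cosmetic difference is that the paper phrases the key consequence of properness as ``$k$ is the base change of $i$ along $p_2$'' rather than arguing directly with closedness of the graph in $\Xbar'\times X$, but these are two ways of saying the same thing.
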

\begin{proof}
Let $ \Gamma \subset \Xbar' \times \Xbar$ be the graph of $\varphi$, i.e.\ 
the scheme theoretic closure of $(j, i\circ \varphi) \colon X' \longrightarrow  \Xbar' \times \Xbar$.
Let $ \Gamma_{0} \subset \Gamma$ be a closed subscheme which is a good projectivization of $X'$
constructed in  \cref{lem:goodpro} (\ref{lem:goodpro:clsub}).
Then we get the following commutative diagram:
 \[
\xymatrix{
X' \ar[d]_{\varphi} \ar[r]^{j} \ar@/^{20pt}/[rr]^{k} & \Xbar'  & \Gamma_{0} \ar[l]_{p_{1}} \ar[dl]^{p_{2}} \\
X \ar[r]_{i} & \Xbar &
}
\]
where $p_{i}$ are the morphisms induced by projections and $k$ is the open immersion.
Since $\varphi$ is proper, $k$ is the base change of $i$ along $p_{2}$.
Thus
\[
p_{2}^{-1}(Z) = \Gamma_{0} \setminus X' = p_{1}^{-1}(Z')
\]
as sets, where the last equality follows from the definition of the graph.
In particular, $Z \cap p_{2}(\Ass( \Gamma_{0})) =  \emptyset$ and $Z' \cap p_{1}(\Ass( \Gamma_{0}))$.
Therefore, we get
\begin{align*}
\lambda_{Z} \circ ((i\circ \varphi) \times \id) &= \lambda_{Z} \circ ((p_{2}\circ k) \times \id) + O_{M_{K}}(1)\\
& = \lambda_{p_{2}^{-1}(Z)} \circ (k \times \id) + O_{M_{K}}(1)\\
& \gl \lambda_{p_{1}^{-1}(Z')} \circ (k \times \id) + O_{M_{K}}(1)\\
&= \lambda_{Z'} \circ (j\times \id) + O_{M_{K}}(1).
\end{align*}

\end{proof}

\begin{cor}\label{cor:indofbdryfunc}
Let $X$ be a quasi-projective scheme over $\KK$.
Let
\[
i \colon X \longrightarrow \Xbar_{1}, \quad j \colon X \longrightarrow \Xbar_{2}
\]
be two good projectivizations.
Let $Z_{1} \subset \Xbar_{1}$ and $Z_{2} \subset \Xbar_{2}$ be closed subschemes such that
\[
\Supp Z_{1} = \Xbar_{1}\setminus X, \quad \Supp Z_{2} = \Xbar_{2} \setminus X.
\]
Fix local height functions $ \lambda_{Z_{1}}$ and $ \lambda_{Z_{2}}$.
Then we have
\[
\lambda_{Z_{1}} \circ (i \times \id) \gl \lambda_{Z_{2}} \circ (j \times \id) 
\]
on $X(\KK) \times M(\KK)$ up to $M_{K}$-bounded function.
\end{cor}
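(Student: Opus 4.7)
The plan is to obtain this corollary as an immediate application of the preceding \cref{lem:pullbackbdfcproper}. Concretely, I would take $X' = X$ in the lemma and let $\varphi = \id_X \colon X \longrightarrow X$, which is trivially a proper morphism between quasi-projective schemes over $\KK$. The two good projectivizations required by the lemma are exactly the two given in the corollary, namely $i \colon X \longrightarrow \Xbar_{1}$ and $j \colon X \longrightarrow \Xbar_{2}$, playing the roles of $i$ and $j$ respectively (with $\Xbar_{1}$ as $\Xbar$ and $\Xbar_{2}$ as $\Xbar'$). The closed subschemes $Z_{1} \subset \Xbar_{1}$ and $Z_{2} \subset \Xbar_{2}$, with $\Supp Z_{1} = \Xbar_{1} \setminus X$ and $\Supp Z_{2} = \Xbar_{2} \setminus X$, play the roles of $Z$ and $Z'$ in the lemma.

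Under this identification, the conclusion of \cref{lem:pullbackbdfcproper} reads
\[
\lambda_{Z_{1}} \circ ((i \circ \id_{X}) \times \id) \gl \lambda_{Z_{2}} \circ (j \times \id)
\]
on $X(\KK) \times M(\KK)$ up to $M_{K}$-bounded function, which after simplifying $i \circ \id_{X} = i$ is exactly the statement to be proved.

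There is essentially no obstacle here, since all the genuine work --- constructing the graph $\Gamma$ of $\varphi$, passing to the good projectivization $\Gamma_{0}$ via \cref{lem:goodpro} (\ref{lem:goodpro:clsub}), using properness to identify the set-theoretic preimages of the boundaries, and then applying \cref{thm:basiclocht} (\ref{propertypullback}) and (\ref{propertycontain}) to compare the local heights --- has already been carried out inside the proof of \cref{lem:pullbackbdfcproper}. The corollary is therefore really just the specialization $\varphi = \id_{X}$ of that lemma, and the proof is a one-line invocation.
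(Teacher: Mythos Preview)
Your proposal is correct and matches the paper's own proof, which simply says ``This follows from \cref{lem:pullbackbdfcproper}.'' Your specialization $\varphi = \id_{X}$ is exactly the intended application, and your explanation spells out the details that the paper leaves implicit.
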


\begin{proof}
This follows from \cref{lem:pullbackbdfcproper}.
\end{proof}

\begin{defn}
Let $X$ be a quasi-projective scheme over $\KK$.
Let us take a good projectivization $X \subset \Xbar$.
Let $Z \subset \Xbar$ be a closed subscheme with $\Supp Z = \Xbar \setminus X$.
A function $ \lambda$ such that $ \lambda \gl \lambda_{Z}$ up to $M_{K}$-bounded function
for such $Z$ is denoted by $ \lambda_{\partial X}$ and called a boundary function of $X$.
By \cref{cor:indofbdryfunc}, if $ \lambda$ and $ \lambda'$ are two boundary functions of $X$, we have
\[
 \lambda \gl \lambda'
\]
up to $M_{K}$-bounded function.

Let $ \lambda_{1}$ and $ \lambda_{2}$ be two maps from $X(\KK) \times M(\KK)$ to $\R \cup \{\infty\}$.
The property
\[
\lambda_{1}  \leq  \lambda_{2} + O( \lambda_{\partial X}) + O_{M_{K}}(1)
\]
is independent of the choice of $ \lambda_{\partial X}$.
(We use the convention that $\infty$ is the max element in $\R \cup \{\infty\}$, $\infty \pm a = \infty $ for any $a\in \R$, 
$\infty + \infty = \infty$, and $\infty-\infty=0$.)
We write 
\[
\lambda_{1} = \lambda_{2} + O( \lambda_{\partial X})+ O_{M_{K}}(1)
\]
if $\lambda_{1}  \leq  \lambda_{2} + O( \lambda_{\partial X})+ O_{M_{K}}(1)$ 
and $\lambda_{2}  \leq  \lambda_{1} + O( \lambda_{\partial X})+ O_{M_{K}}(1)$.
In other words,
\[
| \lambda_{1}(x,v)- \lambda_{2}(x,v)| \leq C \lambda_{Z}(x,v) + \gamma(v), \quad (x,v) \in X(\KK) \times M(\KK)
\]
for some $C>0$ and some $M_{K}$-constant $ \gamma$.

If $\lambda_{1} = \lambda_{2} + O( \lambda_{\partial X})+ O_{M_{K}}(1)$, we say
$ \lambda_{1}$ and $ \lambda_{2}$ are equal up to boundary function.
\end{defn}

\begin{rmk}
Since any $ \lambda_{\partial X}$ takes finite values on $X(\KK) \times M(\KK)$,
$\lambda_{1}  \leq  \lambda_{2} + O( \lambda_{\partial X})+ O_{M_{K}}(1)$ and $ \lambda_{1}(x,v)= \infty$ imply
$ \lambda_{2}(x,v) = \infty$.
\end{rmk}

\begin{rmk}\label{rmk:bdrpull}
By \cref{lem:pullbackbdfcproper}, boundary functions are compatible with proper morphisms.
That is, let $\varphi \colon X' \longrightarrow X$ be a proper morphism between quasi-projective schemes over $\KK$.
Then for any boundary functions $ \lambda_{\partial X}$ and $ \lambda_{\partial X'}$ on $X$, $X'$ respectively, we have
\[
\lambda_{\partial X} \circ (\varphi \times \id) \gl \lambda_{\partial X'}
\]
on $X'(\KK) \times M(\KK)$ up to $M_{K}$-bounded function.
\end{rmk}

\begin{rmk}
On a projective scheme, a boundary function is nothing but an $M_{K}$-bounded function.
\end{rmk}

\begin{lem}\label{lem:bdrprod}
Let $X, X'$ be quasi-projective schemes over $\KK$.
Then for any boundary functions $ \lambda_{\partial X}$ and $ \lambda_{\partial X'}$ on $X$ and $X'$,
we have
\[
\lambda_{\partial (X\times X')} \gl \lambda_{\partial X}\circ (\pr_{1}\times \id) + \lambda_{\partial X'}\circ (\pr_{2}\times \id)
\]
up to $M_{K}$-bounded function.
Here $\pr_{i}$ is the projection from $X \times X'$ to each factor.
\end{lem}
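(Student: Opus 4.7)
The plan is to exhibit an explicit candidate for a boundary function of $X \times X'$ built from boundary data on $X$ and $X'$, and then show it is equivalent (up to $M_K$-bounded function and a constant factor) to the sum $\lambda_{\partial X}\circ(\pr_1\times\id) + \lambda_{\partial X'}\circ(\pr_2\times\id)$. First, choose good projectivizations $i\colon X \hookrightarrow \Xbar$ and $i'\colon X'\hookrightarrow \Xbar'$ by \cref{lem:goodpro}; by \cref{lem:prodgp} the product $i\times i'\colon X\times X' \hookrightarrow \Xbar\times\Xbar'$ is again a good projectivization. Pick closed subschemes $Z\subset \Xbar$ with $\Supp Z = \Xbar\setminus X$ and $Z'\subset \Xbar'$ with $\Supp Z' = \Xbar'\setminus X'$, and set
\[
W := \pr_1^{-1}(Z)\cup \pr_2^{-1}(Z') \subset \Xbar\times\Xbar',
\]
so $\Supp W = (\Xbar\times\Xbar')\setminus(X\times X')$. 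Thus $\lambda_W$ is a boundary function of $X\times X'$, and $\lambda_{\partial(X\times X')} \gl \lambda_W$ up to $M_K$-bounded function.

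Next, I would apply \cref{thm:basiclocht}. Flatness of $\pr_i$ gives $\pr_i(\Ass(\Xbar\times\Xbar'))\subset\Ass(\Xbar)\cup\Ass(\Xbar')\subset X\cup X'$, which is disjoint from $Z$ and $Z'$, so pullback property (6) yields $\lambda_{\pr_i^{-1}(Z)} = \lambda_Z\circ(\pr_i\times\id)+O_{M_K}(1)$ and similarly for $Z'$. Property (4) applied to $W$ now gives
\begin{align*}
\max\bigl\{\lambda_Z\circ(\pr_1\times\id),\,\lambda_{Z'}\circ(\pr_2\times\id)\bigr\} &\leq \lambda_W + O_{M_K}(1),\\
\lambda_W &\leq \lambda_Z\circ(\pr_1\times\id) + \lambda_{Z'}\circ(\pr_2\times\id) + O_{M_K}(1).
\end{align*}
The upper bound already gives one half of the desired $\gl$ relation. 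For the other half I need to dominate the sum by the max, which requires a lower bound on each summand.

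The main obstacle is exactly this: showing that $\lambda_Z$ (and $\lambda_{Z'}$) is bounded below by an $M_K$-constant on all of $(\Xbar\setminus Z)(\KK)\times M(\KK)$, which contains $X(\KK)\times M(\KK)$. To see this, present $Z = D_1\cap\cdots\cap D_r$ using \cref{lem:existpres} (3), so that $\lambda_Z = \min_i \lambda_{\D_i} + O_{M_K}(1)$. Cover $\Xbar$ by finitely many $M_K$-bounded families $\{B^\alpha\}$ using \cref{prop:bddcov}, on each of which \cref{lem:divhtbddset} gives $\lambda_{\D_i} = \log(1/|f_{i,\alpha}|_v) + O_{M_K}(1)$; since $|f_{i,\alpha}|_v$ is bounded above on the bounded family, $\log(1/|f_{i,\alpha}|_v)$ is bounded below (by an $M_K$-constant). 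Hence $\lambda_Z \geq -\gamma_0(v)$ on $X(\KK)\times M(\KK)$ for some $M_K$-constant $\gamma_0$, and likewise $\lambda_{Z'} \geq -\gamma_0'(v)$.

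With these lower bounds in hand, the elementary inequality $a+b \leq 2\max(a,b)$ applied to the shifts $\lambda_Z\circ(\pr_1\times\id)+\gamma_0$ and $\lambda_{Z'}\circ(\pr_2\times\id)+\gamma_0'$ yields
\[
\lambda_Z\circ(\pr_1\times\id) + \lambda_{Z'}\circ(\pr_2\times\id) \leq 2\lambda_W + O_{M_K}(1).
\]
Combined with the upper bound from \cref{thm:basiclocht} (4), this gives $\lambda_W \gl \lambda_Z\circ(\pr_1\times\id) + \lambda_{Z'}\circ(\pr_2\times\id)$ up to $M_K$-bounded function. Finally, since $\lambda_Z$ and $\lambda_{Z'}$ are themselves boundary functions on $X$ and $X'$, their pullbacks are $\gl$-equivalent to $\lambda_{\partial X}\circ(\pr_1\times\id)$ and $\lambda_{\partial X'}\circ(\pr_2\times\id)$, and summing these equivalences (which is legitimate because both sides are bounded below up to $M_K$-bounded function) completes the proof.
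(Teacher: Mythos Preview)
Your proof is correct and shares the same setup as the paper's: choose good projectivizations of $X$ and $X'$, invoke \cref{lem:prodgp} so that the product is a good projectivization of $X\times X'$, and identify the boundary with $\pr_1^{-1}(Z)\cup\pr_2^{-1}(Z')$. Where you diverge is in justifying the final $\gl$ relation. The paper's proof is one line: it simply asserts
\[
\lambda_{((\Xbar\times\Xbar')\setminus(X\times X'))_{\rm red}}\ \gl\ \lambda_Z\circ(\pr_1\times\id)+\lambda_{Z'}\circ(\pr_2\times\id),
\]
which is most naturally read as an appeal to \cref{thm:basiclocht}\,(5) (equal supports give $\gl$) together with (2) (additivity for the sum $\pr_1^{-1}(Z)+\pr_2^{-1}(Z')$) and (6) (pullback). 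You instead work with the scheme-theoretic union $W$ and property~(4), then close the gap between $\max$ and sum via $a+b\le 2\max(a,b)$; note that this inequality holds for all real $a,b$, so the shift by $\gamma_0,\gamma_0'$ is not actually needed at that stage. Your lower-bound argument \emph{is} needed, however, for your final step of summing the two $\gl$ relations $\lambda_Z\gl\lambda_{\partial X}$ and $\lambda_{Z'}\gl\lambda_{\partial X'}$, a point the paper passes over in silence. Both routes are valid; the paper's is shorter, while yours makes explicit a detail the paper leaves to the reader.
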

\begin{proof}
Let
\[
i \colon X \longrightarrow \Xbar,\quad j \colon X' \longrightarrow \Xbar'
\]
be good projectivizations.
Then the product
\[
i \times j \colon X \times X' \longrightarrow \Xbar \times \Xbar'
\]
is a good projectivization by \cref{lem:prodgp}.
Let $Z=(\Xbar \setminus X)_{\rm red}$ and $Z' = (\Xbar' \setminus X')_{\rm red}$. 
Then $(\Xbar \times \Xbar') \setminus (X \times X') = \pr_{1}^{-1}(Z) \cup \pr_{2}^{-1}(Z')$ as sets.
Thus
\[
\lambda_{((\Xbar \times \Xbar') \setminus (X \times X') )_{\rm red}}
 \gl  \lambda_{Z} \circ (\pr_{1} \times \id ) + \lambda_{Z'} \circ (\pr_{2} \times \id)
\]
up to $M_{K}$-bounded function.
\end{proof}

\subsection{Local height functions on quasi-projective schemes}

In this subsection we fix an infinite field $K$ equipped with a proper set of absolute values $M_{K}$.
We fix an algebraic closure $ \KK$ of $K$.

We will attach  local height functions to closed subschemes of quasi-projective schemes up to boundary functions.
On a projective scheme, local heights associated with presentations of a closed subscheme are the same up to
$M_{K}$-bounded function.
On a quasi-projective scheme, it is generalized to the following theorem, namely the same statement is true if we replace
$M_{K}$-bounded function with boundary function.  

\begin{thm}\label{thm:induptobdy}
Let $K \subset F \subset L \subset \KK$ be intermediate fields such that $[L:K]<\infty$.
Let $X$ be a quasi-projective scheme over $F$.
Let $Y \subset X_{L}$ be a closed subscheme such that $Y \cap \Ass(X_{L}) =  \emptyset$.
Let $\Y$ and $\Y'$ be two presentations of $Y$.
Then $ \lambda_{\Y}$ and $ \lambda_{\Y'}$ are equal up to boundary function, i.e.
\[
\lambda_{\Y} = \lambda_{\Y'} + O( \lambda_{\partial X_{\KK}})+ O_{M_{K}}(1)
\]
on $X(\KK)\times M(\KK)$.
\end{thm}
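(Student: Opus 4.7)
The plan is to adapt the proof of \cref{prop:indpre} (the projective analogue) by allowing the $M_K$-constant error terms that appear there to grow like $\log(1/|h|_v)$ for $h$ a local equation of the boundary $\Xbar\setminus X$; such growth will be absorbed into $O(\lambda_{\partial X_{\KK}})$ via \cref{lem:divhtbddset}. Exactly as at the beginning of \cref{prop:indpre}, I would first combine the two presentations and induct on the number of extra divisors to reduce to $\Y = (Y; \D_1,\dots,\D_r)$ and $\Y' = (Y; \D_1,\dots,\D_r,\E)$; then $\lambda_{\Y'} \leq \lambda_{\Y}$ is immediate from the definition, and only $\lambda_{\Y'} \geq \lambda_{\Y} - O(\lambda_{\partial X_{\KK}}) - O_{M_K}(1)$ requires proof.

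Next I would use \cref{lem:goodpro} to fix a good projectivization $X \hookrightarrow \Xbar$ over $F$ in which the reduced complement $Z := (\Xbar\setminus X)_{\rm red}$ is a Cartier divisor, so that a local height for $Z$ on $\Xbar_{\KK}$ may be taken as $\lambda_{\partial X_{\KK}}$. I would then cover $\Xbar_L$ by finitely many affine opens $\{U_i\}_{i=1}^n$ chosen so that on each $U_i$ the divisor $Z$ is cut out by a single element $h_i \in \O(U_i)$ and on each nonempty $U_i \cap X_L$ all invertible sheaves appearing in $\D_1,\dots,\D_r,\E$ are simultaneously trivializable. By \cref{prop:bddcov} I would pick $M_K$-bounded families $B^i = (B^i_v)_{v \in M(\KK)}$ of $U_i$ with $\bigcup_i B^i_v = \Xbar(\KK)$; \cref{lem:divhtbddset} applied to $Z$ yields $\log(1/|h_i(x)|_v) = \lambda_{\partial X_{\KK}}(x,v) + O_{M_K}(1)$ on $B^i_v \cap X(\KK)$.

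The heart of the argument is the local estimate on a fixed $U_i$ with $U_i \cap X_L \neq \emptyset$. I would fix trivializations on $U_i \cap X_L = \Spec \O(U_i)[h_i^{-1}]$ and write $D_j|_{U_i\cap X_L} = \di(f_{i,j})$ and $E|_{U_i\cap X_L} = \di(g_i)$. Since $\Y$ presents $Y$ and $Y \subset E$ scheme-theoretically, one has $g_i = \sum_j c_{i,j} f_{i,j}$ with $c_{i,j} \in \O(U_i \cap X_L)$; clearing denominators yields $c_{i,j} = \tilde c_{i,j}/h_i^{N_{i,j}}$ for some $\tilde c_{i,j} \in \O(U_i)$. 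Repeating the computation in the Claim inside the proof of \cref{prop:indpre} with these $c_{i,j}$'s in place of the $a_i$'s produces an extra factor of the form $(1/|h_i(x)|_v)^{N}$ in the bound, and the analogous variant of \cref{lem:divhtbddset}---obtained by writing the trivialized generating sections $s^{(j)}_k, t^{(j)}_l$ and the unit relating $s_{D_j}$ to $f_{i,j}$ (along with its inverse) as $h_i$-fractions on $U_i$---shows that $\lambda_{\D_j}(x,v)$ and $\lambda_{\E}(x,v)$ each agree with $\log(1/|f_{i,j}(x)|_v)$ and $\log(1/|g_i(x)|_v)$ respectively up to $O(\log(1/|h_i(x)|_v)) + O_{M_K}(1)$ on $B^i_v \cap (X_L\setminus Y)(\KK)$. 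Combining these inequalities and converting $\log(1/|h_i(x)|_v)$ into $\lambda_{\partial X_{\KK}}(x,v)$ yields $\lambda_{\E}(x,v) \geq \lambda_{\Y}(x,v) - C\lambda_{\partial X_{\KK}}(x,v) - \gamma(v)$ on each $B^i_v$ for a uniform $C > 0$ and $M_K$-constant $\gamma$; since the $B^i_v$ cover $X(\KK)$, this gives the desired bound.

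The hard part will be the uniform bookkeeping in this local step: I must fix, independently of $v$ and $x$, integer upper bounds for the pole orders along $Z$ of the coefficients $\tilde c_{i,j}$, of the regular functions corresponding to the generating sections $s^{(j)}_k$ and $t^{(j)}_l$ under the chosen trivializations on $U_i \cap X_L$, and of both the unit $u$ relating $s_{D_j}$ to $f_{i,j}$ and its inverse $u^{-1}$. Once these finitely many integers are in hand, the projective proof of \cref{prop:indpre} carries through verbatim with every $M_K$-constant factor replaced by an $M_K$-constant multiple of $(1/|h_i|_v)^{\rm const}$, each of which contributes a bounded multiple of $\lambda_{\partial X_{\KK}}$ to the final estimate.
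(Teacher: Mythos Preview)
Your approach is sound and genuinely different from the paper's. You stay on a single good projectivization $\Xbar$ and redo the local computation of \cref{prop:indpre} verbatim, except that every auxiliary regular function (the coefficients $c_{i,j}$, the trivialized sections, the unit $u$ and its inverse) now lives in $\O(U_i)[h_i^{-1}]$ rather than $\O(U_i)$; clearing the $h_i$-denominators converts each $M_K$-constant in that proof into an $M_K$-constant times a fixed power of $1/|h_i(x)|_v$, which is $O(\lambda_{\partial X_{\KK}})+O_{M_K}(1)$ by \cref{lem:divhtbddset}. The paper instead argues structurally: it shows (in two claims) that any presentation $\Y$ on $X_L$ extends to a presentation $\widetilde{\Y}$ of some $\widetilde{Y}\subset\Xbar$ on a suitably chosen good projectivization, with $\widetilde{\Y}|_{X_L}$ obtained from $\Y$ by adding to each $\D_i$ a presentation of a boundary-supported divisor. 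This gives $\lambda_{\Y}=\lambda_{\widetilde{\Y}}\circ(i\times\id)+O(\lambda_{\partial X})+O_{M_K}(1)$ directly from additivity, and since $\widetilde{\Y}$ lives on a projective scheme, \cref{lem:localheightbyprojectivization} (together with the already-proven \cref{prop:indpre}) finishes the argument. Your route is more hands-on; the paper's reduces cleanly to the projective case.

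One point in your outline needs care: you assert an affine cover $\{U_i\}$ of $\Xbar_L$ with $Z|_{U_i}$ principal \emph{and} all presentation line bundles trivial on $U_i\cap X_L$. Since the $\L^{(j)},\M^{(j)}$ are defined only on $X_L$ and need not extend to an arbitrary $\Xbar$, such a cover is not automatic for the projectivization produced by \cref{lem:goodpro} alone. The fix is to choose $\Xbar$ so that these globally generated bundles do extend---embed $X_L$ into a product of projective spaces via the generating sections $s^{(j)}_k,t^{(j)}_l$, take the closure, and blow up the boundary (exactly the construction the paper carries out in its first claim). Once the bundles extend, the cover exists trivially and your bookkeeping goes through as written.
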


\begin{cor}\label{cor:pullbackbfc}
Let $ \varphi \colon X' \longrightarrow X$ be a morphism between quasi-projective schemes over $\KK$.
Then 
\[
\lambda_{\partial X} \circ (\varphi \times \id) = O( \lambda_{\partial X'})+ O_{M_{K}}(1).
\]
\end{cor}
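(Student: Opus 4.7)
The idea is to mirror the graph construction from the proof of \cref{lem:pullbackbdfcproper}, where properness of $\varphi$ yielded a two-sided equivalence $\gl$; in the non-proper case I will extract the one-sided inequality $\ll$, which is exactly what is stated.

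Fix good projectivizations $i \colon X \hookrightarrow \Xbar$ and $j \colon X' \hookrightarrow \Xbar'$, and put $Z = (\Xbar \setminus X)_{\rm red}$, $Z' = (\Xbar' \setminus X')_{\rm red}$. By the definition of boundary function it suffices to prove
\[
\lambda_{Z} \circ ((i\circ \varphi) \times \id) \;\ll\; \lambda_{Z'}\circ (j \times \id) + O_{M_{K}}(1).
\]
Form the scheme-theoretic closure $\Gamma \subset \Xbar' \times \Xbar$ of $(j, i\circ \varphi)(X')$, and let $\Gamma_{0} \subset \Gamma$ be the good projectivization of $X'$ supplied by \cref{lem:goodpro}(1). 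Write $p_{1}, p_{2}$ for the projections $\Gamma_{0} \to \Xbar', \Xbar$ and $k \colon X' \hookrightarrow \Gamma_{0}$ for the induced open immersion, so that $p_{1}\circ k = j$ and $p_{2}\circ k = i\circ \varphi$.

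The crucial geometric input is the set-theoretic identification $k(X') = p_{1}^{-1}(X')$ inside $\Gamma_{0}$, whose nontrivial direction I will argue as follows. Since $\Xbar$ is separated over $\KK$, the graph $\Gamma_{\varphi} \subset X' \times X$ of $\varphi$ is already closed in $X' \times \Xbar$, being the equalizer of the two morphisms $X' \times \Xbar \rightrightarrows \Xbar$ given by $\pr_{\Xbar}$ and $(i\circ\varphi)\circ \pr_{X'}$. Intersecting $\Gamma$, the closure of $\Gamma_{\varphi}$ in $\Xbar'\times \Xbar$, with the open subscheme $X'\times \Xbar$ therefore returns $\Gamma_{\varphi}$, which as a subset of $\Gamma_{0}$ is exactly $k(X')$. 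Combined with the obvious inclusion $k(X') \subset p_{2}^{-1}(X)$ coming from $p_{2}\circ k = i\circ \varphi$, this yields the support containment
\[
\Supp p_{2}^{-1}(Z) = \Gamma_{0} \setminus p_{2}^{-1}(X) \;\subset\; \Gamma_{0} \setminus k(X') = \Supp p_{1}^{-1}(Z').
\]

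Now \cref{thm:basiclocht}(5) produces a constant $C>0$ with $\lambda_{p_{2}^{-1}(Z)} \leq C\,\lambda_{p_{1}^{-1}(Z')} + O_{M_{K}}(1)$ away from $\Supp p_{2}^{-1}(Z)$, a domain that contains $k(X')(\KK) \times M(\KK)$. Pulling back along $k$ and applying \cref{thm:basiclocht}(6) to both sides translates this into the desired inequality. The main obstacle is precisely the identification $k(X') = p_{1}^{-1}(X')$: without properness one cannot base-change $i$ along $p_{2}$ as in \cref{lem:pullbackbdfcproper}, so the separatedness-of-target argument showing closedness of $\Gamma_{\varphi}$ inside $X' \times \Xbar$ appears to be the essential new input.
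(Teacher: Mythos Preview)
Your argument is correct. The graph construction goes through essentially as in \cref{lem:pullbackbdfcproper}; the point you identify---that the graph of $i\circ\varphi$ is already closed in $X'\times\Xbar$ by separatedness of $\Xbar$, so $\Gamma_{0}\cap(X'\times\Xbar)=k(X')$ set-theoretically---is exactly what replaces the base-change step that required properness. From $\Supp p_{2}^{-1}(Z)\subset\Supp p_{1}^{-1}(Z')$ on the projective scheme $\Gamma_{0}$, \cref{thm:basiclocht}(5) and~(6) finish the job.

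The paper, however, takes a rather different and shorter route. It deduces the corollary from \cref{thm:induptobdy} as follows: choose a presentation $\cZ$ of $Z$ on $\Xbar$, so that $\lambda_{\cZ}|_{X}=\lambda_{i^{*}\cZ}$ is the local height attached to a presentation of the \emph{empty} subscheme of $X$; by functoriality $\lambda_{\cZ}\circ(\varphi\times\id)=\lambda_{\varphi^{*}i^{*}\cZ}$, still a presentation of $\emptyset\subset X'$; and then \cref{thm:induptobdy} says any presentation of $\emptyset$ gives a function that is $O(\lambda_{\partial X'})+O_{M_{K}}(1)$. So the paper treats \cref{thm:induptobdy} as the workhorse and reads the corollary off immediately, whereas your argument bypasses \cref{thm:induptobdy} entirely and uses only the projective theory (\cref{thm:basiclocht}). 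Your route is more self-contained and in fact shows that this one-sided pullback bound is logically independent of \cref{thm:induptobdy}; the paper's route is slicker once that theorem is in hand.
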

\begin{proof}
Let $K \subset F \subset \KK$ be an intermediate field such that $[F:K] < \infty $ and 
$X, X'$, and $\varphi$ are defined over $F$.
Let us denote models over $F$ by $X_{F}, X'_{F}$, and $\varphi_{F}$.
(Note that we take $F$ large enough so that $X_{F}$ and $X'_{F}$ are quasi-projective.)
Take a good projectivization $i \colon X_{F} \longrightarrow \Xbar $ over $F$.
Let $Z = \Xbar \setminus X_{F}$ with reduced structure.
Take a presentation $\cZ$ of $Z$.
Then $i^{*}\cZ$ is a presentation of the empty subscheme $  \emptyset \subset X_{F}$.
It is enough to show the statement for $ \lambda_{\partial X} = \lambda_{\cZ}$.
By the functoriality, we have
\begin{align*}
\lambda_{\cZ} \circ (\varphi \times \id) = \lambda_{\varphi^{*}\cZ}.
\end{align*}
Note that $\varphi^{*}\cZ$ is a presentation of $  \emptyset \subset X'_{F}$.
By \cref{thm:induptobdy}, we get $ \lambda_{\varphi^{*}\cZ} = O( \lambda_{\partial X'})$.

\end{proof}

\begin{defn}[Local heights on quasi-projective schemes]
Let $X$ be a quasi-projective scheme over $\KK$.
Let $Y \subset X$ be a closed subscheme such that $Y \cap \Ass(X) =  \emptyset$.
Let $K \subset F \subset \KK$ be an intermediate field such that $[F:K] < \infty $
and there exist a quasi-projective scheme $X_{F}$ over $F$ and a closed subscheme
$Y_{F} \subset X_{F}$ such that
 \[
\xymatrix{
(X_{F})_{\KK} \ar[r]^{\sim} & X\\
(Y_{F})_{\KK} \ar[r]^{\sim} \ar@{^{(}->}[u] & Y \ar@{^{(}->}[u]
}
\] 
commutes.

Let $\Y$  be a presentation of $Y_{F}$.
Any map $X(\KK) \times M(\KK) \longrightarrow \R\cup \{\infty\}$ which is equal to $ \lambda_{\Y}$
up to boundary function is called a height function associated with $Y$ and denoted by $ \lambda_{Y}$.
This function $ \lambda_{Y}$ is determined up to boundary function and the definition is independent of the choice 
of $F, X_{F}$, and the presentation $\Y$.
\end{defn}

To prove  \cref{thm:induptobdy}, we start with the following lemma, by which we can actually
give an alternative  definition of local height on quasi-projective schemes.

\begin{lem}\label{lem:localheightbyprojectivization}
Let $X$ be a quasi-projective scheme over $\KK$.
Let $Y \subset X$ be a closed subscheme such that $Y \cap \Ass(X) =  \emptyset$.
Let
\[
i \colon X \longrightarrow \Xbar_{1}, \quad j \colon X \longrightarrow \Xbar_{2}
\]
be two good projectivizations.
Let $ \widetilde{Y}_{1} \subset \Xbar_{1}$ and $ \widetilde{ Y}_{2} \subset \Xbar_{2}$ be closed subschemes such that
\[
\widetilde{Y}_{1} \cap X = Y, \quad \widetilde{Y}_{2} \cap X = Y.
\] 
Fix local heights $ \lambda_{ \widetilde{Y}_{1}}$ and $ \lambda_{ \widetilde{Y}_{2}}$.
Then we have
\[
 \lambda_{ \widetilde{Y}_{1}}\circ (i \times \id) =  \lambda_{ \widetilde{Y}_{2}} \circ (j \times \id)+ O( \lambda_{\partial X})+ O_{M_{K}}(1)
\]
on $X(\KK) \times M(\KK)$.
\end{lem}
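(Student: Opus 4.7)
The plan is to reduce the comparison to a single good projectivization of $X$ that dominates both $\Xbar_{1}$ and $\Xbar_{2}$. I would let $\Gamma\subset \Xbar_{1}\times \Xbar_{2}$ be the scheme-theoretic closure of $(i,j)\colon X\to \Xbar_{1}\times \Xbar_{2}$, and then pass to the closed subscheme $\Gamma_{0}\subset \Gamma$ provided by \cref{lem:goodpro} (\ref{lem:goodpro:clsub}); this $\Gamma_{0}$ is a good projectivization of $X$. The two projections $p_{k}\colon \Gamma_{0}\to \Xbar_{k}$ are proper morphisms whose restrictions along the open immersion $k\colon X\hookrightarrow \Gamma_{0}$ are $i$ and $j$ respectively. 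Since $\Ass(\Gamma_{0})\subset X$ and $\widetilde{Y}_{k}\cap X=Y$ with $Y\cap \Ass(X)=\emptyset$, the scheme-theoretic preimages $W_{k}:=p_{k}^{-1}(\widetilde{Y}_{k})$ are disjoint from $\Ass(\Gamma_{0})$, and the functoriality property \cref{thm:basiclocht} (\ref{propertypullback}) gives
\[
\lambda_{\widetilde{Y}_{k}}\circ (p_{k}\times \id)=\lambda_{W_{k}}+O_{M_{K}}(1),\qquad k=1,2,
\]
on $\Gamma_{0}(\KK)\times M(\KK)$. Evaluating at $x\in X(\KK)\subset \Gamma_{0}(\KK)$ reduces the lemma to proving $\lambda_{W_{1}}=\lambda_{W_{2}}+O(\lambda_{\partial X})+O_{M_{K}}(1)$ on $X(\KK)\times M(\KK)$.

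For that bound I would let $Z\subset \Gamma_{0}$ be the reduced closed subscheme with $\Supp Z=\Gamma_{0}\setminus X$, so that $\lambda_{Z}\circ (k\times \id)$ is by definition a boundary function on $X$. Because $W_{1}\cap X=Y=W_{2}\cap X$, the ideal sheaves $\I_{W_{1}}$ and $\I_{W_{2}}$ coincide on the open $X\subset \Gamma_{0}$, so the coherent $\O_{\Gamma_{0}}$-modules $\I_{W_{k}}/(\I_{W_{1}}\cap \I_{W_{2}})$ are supported on $Z$. By Noetherianness of $\Gamma_{0}$ there exists $n>0$ with $\I_{Z}^{n}\cdot \I_{W_{k}}\subset \I_{W_{1}}\cap \I_{W_{2}}\subset \I_{W_{3-k}}$, which translates into scheme-theoretic inclusions
\[
W_{1}\subset W_{2}+nZ \qquad\text{and}\qquad W_{2}\subset W_{1}+nZ.
\]
Applying parts (2) and (3) of \cref{thm:basiclocht} together with the iterated additivity $\lambda_{nZ}=n\lambda_{Z}+O_{M_{K}}(1)$ then yields $|\lambda_{W_{1}}-\lambda_{W_{2}}|\leq n\lambda_{Z}+O_{M_{K}}(1)$ on $\Gamma_{0}(\KK)\times M(\KK)$; restricting to $X(\KK)\subset \Gamma_{0}(\KK)$ and recalling that $\lambda_{Z}$ pulled back to $X$ is a boundary function gives exactly the bound required in the first step.

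The main obstacle I expect is the scheme-theoretic step producing the uniform exponent $n$: one must verify cleanly that a coherent sheaf supported on $Z$ is annihilated by some global power $\I_{Z}^{n}$ on the Noetherian scheme $\Gamma_{0}$, and that the ideal-sheaf containments translate correctly into the $+$ operation on closed subschemes (defined via product of ideal sheaves). The remaining bookkeeping --- checking well-definedness of the pullbacks $p_{k}^{*}\widetilde{Y}_{k}$, matching $p_{k}\circ k$ with $i$ and $j$ under the identification of $X$ with its image in $\Gamma_{0}$, and confirming that $\lambda_{Z}\circ (k\times \id)$ qualifies as a boundary function in the sense of the paper --- is routine given the machinery already established in \cref{thm:basiclocht} and \cref{cor:indofbdryfunc}.
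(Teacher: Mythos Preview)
Your proposal is correct and follows essentially the same route as the paper: form the graph closure $\Gamma\subset\Xbar_{1}\times\Xbar_{2}$, pass to the good projectivization $\Gamma_{0}$ via \cref{lem:goodpro}(\ref{lem:goodpro:clsub}), pull back $\widetilde{Y}_{1},\widetilde{Y}_{2}$ along the projections, and use a Noetherian support argument to obtain the inclusion $W_{1}\subset W_{2}+nZ$ before applying \cref{thm:basiclocht}. The only cosmetic difference is that you phrase the key scheme-theoretic step via the quotient $\I_{W_{k}}/(\I_{W_{1}}\cap\I_{W_{2}})$ (yielding both inclusions simultaneously), whereas the paper considers $\mathrm{Im}(\I_{W_{2}}\to\O/\I_{W_{1}})$ and then switches the roles of $\widetilde{Y}_{1},\widetilde{Y}_{2}$.
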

\begin{proof}
Let $ \Gamma \subset \Xbar_{1} \times \Xbar_{2}$ be
the scheme theoretic closure of $(i, j\circ \varphi) \colon X \longrightarrow  \Xbar_{1} \times \Xbar_{2}$.
Let $ \Gamma_{0} \subset \Gamma$ be a closed subscheme which is a good projectivization of $X$
constructed in  \cref{lem:goodpro} (\ref{lem:goodpro:clsub}).
Then we get the following commutative diagram:
 \[
\xymatrix{
X \ar[rd]_{j}  \ar[r]^{i} \ar@/^{20pt}/[rr]^{k} & \Xbar_{1}  & \Gamma_{0} \ar[l]_{p_{1}} \ar[dl]^{p_{2}} \\
 & \Xbar_{2} &
}
\]
where $p_{i}$ are the morphisms induced by projections and $k$ is the open immersion.
Then we have
\[
p_{1}^{-1}( \widetilde{Y}_{1}) \cap X = Y = p_{2}^{-1}( \widetilde{Y}_{2}) \cap X
\]
as closed subschemes.

\begin{claim}
Let $V$ be an algebraic scheme, $U \subset V$ an open subset, and $Z= V \setminus U$ equipped with the reduced structure.
Let $W_{1}, W_{2}$ be closed subschemes of $V$ such that $W_{1}\cap U= W_{2} \cap U$ as closed subschemes.
Then there is a non-negative integer $n$ such that
\[
W_{1} \subset W_{2} + n Z.
\]
\end{claim}
\begin{claimproof}
Let $\I_{W_{1}}, \I_{W_{2}}$, and $\I_{Z}$ be the ideal sheaf of $W_{1}, W_{2}$, and $Z$.
Let
\[
\mathcal{F} = {\rm Im} (\I_{W_{2}} \longrightarrow \O_{V} \longrightarrow \O_{V}/\I_{W_{1}}).
\]
Then the support of this coherent sheaf $ \mathcal{F}$ is contained in $Z$.
Thus there is $n \geq 0$ such that $\I_{Z}^{n} \mathcal{F}=0$.
This implies the map
\[
\I_{Z}^{n}\I_{W_{2}} \longrightarrow \O_{V} \longrightarrow \O_{V}/\I_{W_{1}}
\]
is zero.
This is what we wanted.
\end{claimproof}

Let $Z = \Gamma_{0} \setminus X$ with reduced structure.
By the claim, there is $n \geq 0$ such that
\begin{align*}
p_{1}^{-1}( \widetilde{Y}_{1}) \subset p_{2}^{-1}( \widetilde{Y}_{2}) + n Z.
\end{align*}

Note that for $l=1,2$ we have
\begin{align*}
&\widetilde{Y}_{l} \cap \Ass( \Xbar_{l}) = \widetilde{Y}_{l} \cap \Ass(X) =  \emptyset\\
&\widetilde{Y}_{l} \cap p_{l}(\Ass( \Gamma_{0})) = \widetilde{Y}_{l} \cap p_{l}(\Ass(X)) =  \emptyset
\end{align*}
and therefore $ \lambda_{ \widetilde{Y}_{l}}$ and $ \lambda_{p_{l}^{-1}( \widetilde{Y}_{l})}$ are well-defined.

Then we get
\begin{align*}
\lambda_{ \widetilde{Y}_{1}}\circ (i \times \id) & = \lambda_{ \widetilde{Y}_{1}} \circ (p_{1}\times \id ) \circ (k \times \id) \\
& = \lambda_{p_{1}^{-1}( \widetilde{Y}_{1})} \circ (k \times \id) + O_{M_{K}}(1) \\
& \leq \lambda_{p_{2}^{-1}( \widetilde{Y}_{2})} \circ (k \times \id) + n \lambda_{Z} \circ (k \times \id) + O_{M_{K}}(1) \\
& = \lambda_{ \widetilde{Y}_{2}} \circ (j \times \id) + n \lambda_{Z} \circ (k \times \id) + O_{M_{K}}(1)
\end{align*}
We can get the opposite inequality by switching the role of $ \widetilde{Y}_{1}$ and $ \widetilde{Y}_{2}$ and we are done.

\end{proof}

Now we prove \cref{thm:induptobdy}.

\begin{proof}[Proof of \cref{thm:induptobdy}]\

\begin{claim}\label{claim:divpreext}
Let $X$ be a quasi-projective scheme over $F$ and $D$ be an effective Cartier divisor on $X$.
Let $\D$ be a presentation of $D$.
Then there exists 
\begin{itemize}
\item a good projectivization $X \subset \Xbar$ such that $\Xbar \setminus X$ is the support of an effective Cartier divisor on $\Xbar$;
\item an effective Cartier divisor $ \tD$ on $\Xbar$ such that $\tD |_{X}= D$; 
\item a presentation $ \widetilde{\D}$ of $\tD$ and a presentation $\E$ of an effective Cartier divisor whose support is $\Xbar \setminus X$,
\end{itemize}
such that
\[
\widetilde{\D}|_{X} = \D + \E|_{X}.
\]

\end{claim}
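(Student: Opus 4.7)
The plan is to build $\Xbar$ so that the presentation $\D$ extends, with any failure to extend absorbed into a boundary presentation $\E$. I would proceed in three stages: first construct a good projectivization with Cartier boundary, then extend the two globally generated line bundles together with their generating sections, and finally extend the divisor $D$ itself using a Nagata-type extension argument.

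For the first two stages, pick any open immersion $i\colon X \hookrightarrow \Xbar_{0}$ into a projective $F$-scheme, and write $\D = (s_{D};\L,s_{0},\dots,s_{n};\M,t_{0},\dots,t_{m};\psi)$. The generating sections define morphisms $\varphi_{\L}\colon X \to \P^{n}_{F}$ and $\varphi_{\M}\colon X \to \P^{m}_{F}$. I would take the scheme-theoretic closure $\Gamma$ of $(i,\varphi_{\L},\varphi_{\M})$ in $\Xbar_{0}\times \P^{n}_{F}\times \P^{m}_{F}$, pass to a good-projectivization closed subscheme $\Gamma_{0}\subset\Gamma$ via \cref{lem:goodpro} (\ref{lem:goodpro:clsub}), and then blow up its boundary as in \cref{lem:goodpro} (2) to obtain $\Xbar$ on which the boundary is exactly the support of an effective Cartier divisor $E_{0}$. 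The projections to $\P^{n}_{F}$ and $\P^{m}_{F}$ extend to all of $\Xbar$, and pulling back $\O(1)$ together with the homogeneous coordinates produces invertible sheaves $\widetilde{\L},\widetilde{\M}$ with regular generating sections $\widetilde{s}_{i},\widetilde{t}_{j}$ extending the given data on $X$; regularity on $\Xbar$ uses $\Ass(\Xbar)\subset X$.

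The main obstacle is the third stage: extending the Cartier divisor $D$ itself, because the isomorphism $\psi$ need not globalize to an isomorphism $\widetilde{\L}\otimes \widetilde{\M}^{-1}\simeq \O_{\Xbar}(\widetilde D)$ for any effective $\widetilde D$. To handle this, I would invoke the standard fact that for every coherent sheaf $\F$ on $\Xbar$, a global section of $\F|_{X}$ extends to a global section of $\F\otimes \O_{\Xbar}(nE_{0})$ once $n$ is large enough (Grothendieck's quasi-coherent extension lemma applied to the open subset $X=\Xbar\setminus\Supp E_{0}$). Applied to the regular section $\psi^{-1}(s_{D})$ of $\widetilde{\L}\otimes \widetilde{\M}^{-1}|_{X}$, this produces a regular global section of $\widetilde{\L}\otimes \widetilde{\M}^{-1}\otimes \O_{\Xbar}(nE_{0})$ whose scheme of zeros is an effective Cartier divisor $\widetilde D$ on $\Xbar$ satisfying $\widetilde D|_{X}=D$.

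Finally, by \cref{lem:existpres} (1) and (2) applied on the projective scheme $\Xbar$, I write $\O_{\Xbar}(nE_{0})\simeq \L_{E}\otimes \M_{E}^{-1}$ for globally generated $\L_{E},\M_{E}$ equipped with regular generating sections $\alpha_{k},\beta_{l}$ that avoid every associated point of $\Xbar$; this gives a presentation $\E$ of $nE_{0}$, whose support is exactly $\Xbar\setminus X$. The tensor product data
\[
\widetilde{\D}=\bigl(s_{\widetilde D};\,\widetilde{\L}\otimes \L_{E},\,\{\widetilde{s}_{i}\otimes \alpha_{k}\};\,\widetilde{\M}\otimes \M_{E},\,\{\widetilde{t}_{j}\otimes \beta_{l}\};\,\widetilde\psi\bigr)
\]
is then a presentation of $\widetilde D$, and choosing $\widetilde\psi$ compatibly with $\psi\otimes \psi_{E}$ yields $\widetilde{\D}|_{X}=\D+\E|_{X}$; the verification reduces to noting that $s_{nE_{0}}|_{X}$ is the unit section and $s_{\widetilde D}|_{X}=s_{D}$ under the induced identifications.
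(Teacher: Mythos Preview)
Your argument is correct and follows essentially the same route as the paper. The only cosmetic differences are that the paper embeds $X$ in $\P^{N}$ rather than an arbitrary $\Xbar_{0}$, skips your intermediate passage to $\Gamma_{0}$ (the blow-up of \cref{lem:goodpro}(2) already yields a good projectivization), and phrases the extension of $s_{D}$ via meromorphic sections: it views $\psi^{-1}(s_{D})\in H^{0}(X,\widetilde{\L}\otimes\widetilde{\M}^{-1})$ as a meromorphic section on $\Xbar$ (using $\Ass(\Xbar)\subset X$), obtains a Cartier divisor $D'$ with $D'|_{X}=D$, and then sets $\tD=D'+dE$ for $d\gg0$ to make it effective---which is exactly your extension lemma rephrased.
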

\begin{claimproof}
Let 
\[
\D = (s_{D}; \L, s_{0},\dots, s_{n}; \M, t_{0},\dots, t_{m}; \psi).
\]
Since $s_{i}$'s and $t_{j}$'s are generating sections, they define a morphism from $X$ to projective spaces 
$\P^{n}$ and $\P^{m}$:
\begin{align*}
\Psi_{\L} &\colon X \longrightarrow \P^{n}=\Proj F[x_{0},\dots, x_{n}]=:P_{1}\\
\Psi_{\M} &\colon X \longrightarrow \P^{m}= \Proj F[y_{0},\dots, y_{m}] =: P_{2}.
\end{align*}
Let take an immersion
\[
\alpha \colon X \longrightarrow \P^{N} =: P_{3}
\]
into some projective space $\P^{N}$.
Let $\Xbar' \subset P_{1} \times P_{2} \times P_{3}$ be the scheme theoretic closure of the immersion
$(\Psi_{\L}, \Psi_{M}, \alpha) \colon X \longrightarrow P_{1} \times P_{2} \times P_{3}$.
Let $\Xbar$ be the blow up of $\Xbar'$ along $(\Xbar' \setminus X)_{ {\rm red}}$.
We get the following diagram:
 \[
\xymatrix{
X \ar[rrr]^{(\Psi_{\L}, \Psi_{M}, \alpha)} \ar[rd]_{i} &&& P_{1} \times P_{2} \times P_{3}\\
& \Xbar \ar[r] & \Xbar' \ar[ru] &
}
\]
where $i$ is the open immersion and it is a good projectivization by \cref{lem:goodpro}.
We identify $X$ with its images via this diagram.
Let $E \subset \Xbar$ be the exceptional divisor of the blow up $\Xbar \longrightarrow \Xbar'$.
Note that $E = \Xbar \setminus X$ as sets.

Let 
\[
p_{i} \colon \Xbar \longrightarrow  P_{1} \times P_{2} \times P_{3} \longrightarrow P_{i}
\]
be the morphism induced by the projections.
Let 
\begin{align*}
\widetilde{\L} & = p_{1}^{*} \O_{P_{1}}(1), \quad \widetilde{s_{0}} = p_{1}^{*}x_{0}, \dots , \widetilde{s_{n}} = p_{1}^{*}x_{n} \\
\widetilde{\M} &= p_{2}^{*} \O_{P_{2}}(1), \quad \widetilde{t_{0}} = p_{2}^{*}y_{0}, \dots , \widetilde{t_{m}} = p_{2}^{*}y_{m}.
\end{align*}
By construction, we have
\begin{align*}
\widetilde{\L}|_{X} &= \L, \quad \widetilde{s_{i}}|_{X} = s_{i}\\
\widetilde{\M}|_{X} &= \M, \quad \widetilde{t_{j}}|_{X} = t_{j}.
\end{align*}
Since $\Ass(\Xbar) = \Ass(X)$, $ \widetilde{s_{i}}$ and $ \widetilde{t_{j}}$ are regular sections.

By the isomorphism
 \[
\xymatrix{
\widetilde{\L} \otimes \widetilde{\M}^{-1}|_{X} \ar[r]^{\sim} & \L \otimes \M^{-1} \ar[r]^{\sim}_{\psi} & \O_{X}(D)
}
\]
the regular section $s_{D} \in H^{0}(X, \O_{X}(D))$ can be regraded as a regular section 
\[
s_{D} \in H^{0}(X, \widetilde{\L} \otimes \widetilde{\M}^{-1}).
\]
Since $\Ass(\Xbar) \subset X$, this is a meromorphic section.
Thus by \cref{subsubsec:divandsec}, this defines a Cartier divisor $D'$ on $\Xbar$ and by construction 
\[
D'|_{X} = D.
\]
Recall that $\Xbar \setminus X$ is the support of the exceptional divisor $E$.
Thus there is a non-negative integer $d\geq 0$ such that 
\[
\tD := D'+dE 
\]
is effective.
Note that $\tD|_{X}=D$ and
\begin{align*}
\O_{\Xbar}(\tD) \simeq \O_{\Xbar}(D') \otimes \O_{\Xbar}(dE) \simeq \widetilde{\L} \otimes \widetilde{\M}^{-1} \otimes \O_{\Xbar}(dE).
\end{align*}
We denote this isomorphism by $\chi$ (from right to left).
Take a presentation of $dE$:
\[
\E = (s_{dE}; \L'; s_{0}', \dots, s_{n'}'; \M' , t_{0}', \dots, t_{m'}'; \psi').
\]
Then 
\[
\widetilde{\D} := (s_{\tD}; \widetilde{\L} \otimes \L', \{ \widetilde{s_{i}}\otimes s_{i'}';\};
\widetilde{\M} \otimes \M', \{ \widetilde{t_{j}} \otimes t_{j'}'\} ; \chi \circ \id \otimes \psi')
\]
is a presentation of $\tD$.
Then we get
\[
\widetilde{\D}|_{X} = \D + \E|_{X}
\]
and we are done.
\end{claimproof}

\begin{claim}\label{claim:clsubpreext}
Let $X$ be a quasi-projective scheme over $F$ and $Y\subset X$ be a closed subscheme such that
$Y \cap \Ass(X)=  \emptyset$.
Let 
\[
\Y = (Y; \D_{1}, \dots, \D_{r})
\]
be a presentation of $Y$.
Then there exist
\begin{enumerate}
\item a good projectivization $i \colon X \longrightarrow \Xbar$;
\item a closed subscheme $ \tY \subset \Xbar$ such that $\tY \cap X=Y$ (in particular, $\tY \cap \Ass(\Xbar)=  \emptyset$);
\item a presentation $ \widetilde{\Y}$ of $\tY$;
\item presentations $\E_{1}, \dots, \E_{r}$ of effective Cartier divisors on $\Xbar$ whose supports are contained in 
$\Xbar \setminus X$,
\end{enumerate}
such that
\[
\widetilde{\Y}|_{X} = (Y; \D_{1}+\E_{1}|_{X}, \dots, \D_{r}+\E_{r}|_{X}).
\]
\end{claim}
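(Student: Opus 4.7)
The plan is to reduce this claim to the single-divisor version already established in Claim~\ref{claim:divpreext}. I would first apply that claim separately to each pair $(D_i, \D_i)$, and then merge the resulting good projectivizations into a single common one via a product-and-closure construction, finally pulling everything back via the projections.

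More precisely, for each $i = 1, \dots, r$, Claim~\ref{claim:divpreext} yields a good projectivization $\iota_i \colon X \hookrightarrow \Xbar_i$, an effective Cartier divisor $\tD_i^{(0)}$ on $\Xbar_i$ restricting to $D_i$, a presentation $\widetilde{\D}_i^{(0)}$ of $\tD_i^{(0)}$, and a presentation $\E_i^{(0)}$ of an effective Cartier divisor with support in $\Xbar_i \setminus X$, such that $\widetilde{\D}_i^{(0)}|_X = \D_i + \E_i^{(0)}|_X$. Since $X$ is separated, the product morphism $(\iota_1, \dots, \iota_r) \colon X \to \Xbar_1 \times_F \cdots \times_F \Xbar_r$ is an immersion; let $\Xbar'$ denote its scheme-theoretic image, which is projective and contains $X$ as a dense open. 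Applying \cref{lem:goodpro}(\ref{lem:goodpro:clsub}) to $\Xbar'$ produces a closed subscheme $\Xbar \subset \Xbar'$, with the same underlying topological space, which is a good projectivization of $X$. The induced projections $p_i \colon \Xbar \to \Xbar_i$ restrict to $\iota_i$ on $X$.

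Next I would pull everything back to $\Xbar$. Since $\Ass(\Xbar) \subset X$, the image $p_i(\Ass(\Xbar))$ sits inside $X \subset \Xbar_i$ and coincides (under $\iota_i$) with $\Ass(X)$. Because the sections appearing in $\widetilde{\D}_i^{(0)}$ and $\E_i^{(0)}$ are regular on $\Xbar_i$ and therefore do not vanish at $\Ass(X)$, and because $\tD_i^{(0)} \cap X = D_i$ is disjoint from $\Ass(X)$, the pullbacks $\widetilde{\D}_i := p_i^{\ast} \widetilde{\D}_i^{(0)}$ and $\E_i := p_i^{\ast} \E_i^{(0)}$ are well-defined presentations on $\Xbar$. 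Setting $\tD_i := p_i^{-1}(\tD_i^{(0)})$, $\tY := \tD_1 \cap \cdots \cap \tD_r$, and $\widetilde{\Y} := (\tY; \widetilde{\D}_1, \dots, \widetilde{\D}_r)$, the required properties follow by direct verification: $p_i|_X = \iota_i$ implies $\widetilde{\D}_i|_X = \widetilde{\D}_i^{(0)}|_X = \D_i + \E_i|_X$, so $\tY \cap X = D_1 \cap \cdots \cap D_r = Y$; the support of $\E_i$ is contained in $p_i^{-1}(\Xbar_i \setminus X) \subset \Xbar \setminus X$; and $\tY \cap \Ass(\Xbar) = Y \cap \Ass(X) = \emptyset$.

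The main point to check carefully is that the common good projectivization $\Xbar$ really admits compatible projections to all the $\Xbar_i$ extending the given open immersions and that $\Ass(\Xbar) \subset X$; once this is arranged via the product-closure plus \cref{lem:goodpro} step, the remainder is essentially a bookkeeping exercise using the functoriality of pullback and additivity of presentations of Cartier divisors developed earlier in this section.
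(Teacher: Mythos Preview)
Your proposal is correct and follows essentially the same route as the paper: apply \cref{claim:divpreext} to each $\D_i$ separately, take the scheme-theoretic image of $X$ in the product $\prod_i \Xbar_i$, pass to a good projectivization via \cref{lem:goodpro}(\ref{lem:goodpro:clsub}), and pull back along the projections. The paper's proof is organized in exactly this way, with only cosmetic differences in notation.
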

\begin{claimproof}
Let $D_{i}$ be the divisors presented by $\D_{i}$.
By the definition of presentation, $Y=D_{1}\cap \cdots \cap D_{r}$.

For each $\D_{i}$, apply \cref{claim:divpreext} and get:
\begin{itemize}
\item a good projectivization $j_{i} \colon X \longrightarrow \Xbar_{i}$;
\item an effective Cartier divisor $ \tD_{i} \subset \Xbar_{i}$ and its presentation $ \widetilde{\D_{i}}$;
\item a presentation $\E_{i}$ of an effective Cartier divisor $E_{i} \subset \Xbar_{i}$ such that
$\Supp E_{i} = \Xbar_{i} \setminus X$,
\end{itemize}
satisfying 
\begin{align*}
&j_{i}^{*}\tD_{i} = D_{i}\\
&j_{i}^{*} \widetilde{\D_{i}} = \D_{i} + j_{i}^{*}\E_{i}.
\end{align*}

Let $\Xbar'$ be the scheme theoretic image of the morphism
$(j_{i})_{i=1}^{r} \colon X \longrightarrow \prod_{i=1}^{r} \Xbar_{i}$.
Let $\Xbar \subset \Xbar'$ be the closed subscheme which is a good projectivization of $X$ 
constructed in \cref{lem:goodpro} (\ref{lem:goodpro:clsub}).
We get the following diagram:
 \[
\xymatrix@R=8pt{
X \ar[rdd]_{i} \ar[rrr]^{(j_{i})_{i=1}^{r}} & & & \prod_{i=1}^{r} \Xbar_{i} \ar[r]^(.60){\pr_{i}} & \Xbar_{i}\\
&  &  \Xbar' \ar[ru] & & \\
& \Xbar \ar[ru] \ar@/_10pt/[rruu]_{ \alpha} \ar@/_19pt/[rrruu]_{p_{i}}&&&
}
\]
where $ \alpha$ is the induced closed immersion, $\pr_{i}$ is the $i$-th projection, $p_{i}= \pr_{i} \circ \alpha$,
and $i$ is the good projectivization.
Note that $p_{i}(\Ass(\Xbar)) = p_{i}(\Ass(X)) = j_{i}(\Ass(X)) = \Ass(\Xbar_{i})$.
Therefore we can pull-back presentations on $\Xbar_{i}$ by $p_{i}$.

Let
\[
\tY = p_{1}^{*}\tD_{1} \cap \cdots \cap p_{r}^{*}\tD_{r}.
\]
Then $\tY \cap \Ass(\Xbar)=  \emptyset$ and $\tY \cap X = Y$.
Let 
\[
\widetilde{\Y} = (\tY; p_{1}^{*}\widetilde{\D_{1}}, \dots, p_{r}^{*} \widetilde{\D_{r}}).
\]
This is a presentation of $\tY$ and 
\[
\widetilde{\Y}|_{X} = (Y; \D_{1}+p_{1}^{*}\E_{1}|_{X}, \dots, \D_{r}+ p_{r}^{*}\E_{r}|_{X} ).
\]
Note that $p_{i}^{*}\E_{i}$ is a presentation of $p_{i}^{*}E_{i}$ and $\Supp p_{i}^{*}E_{i} \subset \Xbar \setminus X$.
\end{claimproof}

Now let $X$ be a quasi-projective scheme over $F$ and $Y \subset X_{L}$ be a closed subscheme such that
$Y \cap \Ass(X_{L}) =  \emptyset$.
Let $\Y = (Y; \D_{1}, \dots, \D_{r})$ be a presentation of $Y$.
Take $i \colon X_{L} \to \Xbar, \tY, \widetilde{\Y}$, and $\E_{i}$ as in \cref{claim:clsubpreext} (apply on $X_{L}$ and set $F=L$).
Then on $X(\KK) \times M(\KK)$, we have
\begin{align*}
\lambda_{\Y} = \min\{ \lambda_{\D_{1}}, \dots, \lambda_{\D_{r}} \}
\end{align*}
and 
\begin{align*}
\lambda_{ \widetilde{\Y}} \circ (i\times \id) = \min\{ \lambda_{\D_{1}}+ \lambda_{\E_{1}} \circ (i\times \id),\dots,
 \lambda_{\D_{r}}+ \lambda_{\E_{r}} \circ (i\times \id)\}.
\end{align*}
Thus
\begin{align*}
&\lambda_{ \widetilde{\Y}} \circ (i\times \id) \leq \lambda_{\Y} + \max\{ \lambda_{\E_{1}} \circ (i\times \id),\dots, \lambda_{\E_{r}} \circ (i\times \id)\}\\
&\lambda_{ \widetilde{\Y}} \circ (i\times \id) \geq \lambda_{\Y} +  \min\{ \lambda_{\E_{1}} \circ (i\times \id),\dots, \lambda_{\E_{r}} \circ (i\times \id)\}.
\end{align*}
Since $\E_{i}$ are presentations of divisors supported in $\Xbar \setminus X_{L}$, we have
\begin{align*}
&\max\{ \lambda_{\E_{1}} \circ (i\times \id),\dots, \lambda_{\E_{r}} \circ (i\times \id)\} = O( \lambda_{\partial X})+ O_{M_{K}}(1)\\
& \min\{ \lambda_{\E_{1}} \circ (i\times \id),\dots, \lambda_{\E_{r}} \circ (i\times \id)\}= O( \lambda_{\partial X})+ O_{M_{K}}(1).
\end{align*}
This implies 
\[
\lambda_{\Y} = \lambda_{ \widetilde{\Y}} \circ (i\times \id) + O( \lambda_{\partial X})+ O_{M_{K}}(1).
\]

Note that $ \lambda_{ \widetilde{\Y}}$ is a local height on a projective scheme $\Xbar$.
Hence the statement follows from \cref{lem:localheightbyprojectivization}.

\end{proof}

\begin{thm}[Basic properties of local heights on quasi-projective schemes]\label{thm:basiclochtqp}
Let $X$ be a quasi-projective scheme over $\KK$.
Let $Y, W \subset X$ be closed subschemes such that $Y\cap \Ass(X) = W\cap \Ass(X) =  \emptyset$.
Fix local heights $\lambda_{Y}$ and $ \lambda_{W}$ associated with $Y$ and $W$.
\begin{enumerate}
\item\label{propertyintersecqp}
Fix local heights $ \lambda_{Y\cap W}$ associated with $Y\cap W$.
Then 
\[
\lambda_{Y\cap W} = \min\{ \lambda_{Y}, \lambda_{W}\} + O( \lambda_{\partial X})+ O_{M_{K}}(1)
\]
on $X(\KK)\times M(\KK)$.
\item
Fix local heights $ \lambda_{Y+W}$ associated with $Y+W$.
Then 
\[
\lambda_{Y+W} = \lambda_{Y}+\lambda_{W} + O( \lambda_{\partial X})+ O_{M_{K}}(1)
\]
on $X(\KK)\times M(\KK)$.

\item\label{propertycontainqp}
If $Y\subset W$ as schemes, then 
\[
\lambda_{Y} \leq \lambda_{W} + O( \lambda_{\partial X})+ O_{M_{K}}(1)
\]
on $X(\KK)\times M(\KK)$.

\item
Fix local heights $ \lambda_{Y\cup W}$ associated with $Y\cup W$.
Then
\begin{align*}
&\max\{ \lambda_{Y}, \lambda_{W}\} \leq \lambda_{Y\cup W} +O( \lambda_{\partial X})+ O_{M_{K}}(1) ;\\
&\lambda_{Y\cup W}  \leq \lambda_{Y} + \lambda_{W} + O( \lambda_{\partial X})+ O_{M_{K}}(1)
\end{align*}
on $X(\KK)\times M(\KK)$.
\item
If $\Supp Y \subset \Supp W$, then there exists a constant $C>0$ such that 
\[
\lambda_{Y} \leq C \lambda_{W} +O( \lambda_{\partial X})+ O_{M_{K}}(1)
\]
on $X(\KK)\times M(\KK)$.
\item\label{propertypullbackqp}
Let $ \varphi \colon X' \longrightarrow X$ be a morphism where $X'$ is a quasi-projective scheme over $\KK$.
Suppose $\varphi(\Ass(X'))\cap Y=  \emptyset$.
Then we can define $ \lambda_{\varphi^{-1}(Y)}$ ,where $\varphi^{-1}(Y)$ is the scheme theoretic preimage,
and 
\[
\lambda_{Y} \circ (\varphi \times \id) = \lambda_{\varphi^{-1}(Y)} + O( \lambda_{\partial X'})+ O_{M_{K}}(1)
\]
on $X'(\KK) \times M(\KK)$.

\item 
Let $ \varphi \colon X' \longrightarrow X$ be a proper morphism where $X'$ is a quasi-projective scheme over $\KK$.
Then for any boundary functions $ \lambda_{\partial X}$ and $ \lambda_{\partial X'}$, we have
\[
\lambda_{\partial X} \circ (\varphi \times \id ) \gl \lambda_{\partial X'}
\]
on $X'(\KK)\times M(\KK)$ up to $M_{K}$-bounded function.
\end{enumerate}
\end{thm}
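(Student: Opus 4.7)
The plan is to reduce every assertion to its projective analogue \cref{thm:basiclocht} via the compactification description of local heights on quasi-projective schemes provided by \cref{lem:localheightbyprojectivization}.

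For items (1)--(5), first fix one good projectivization $i \colon X \to \overline{X}$ and extend $Y, W$ to closed subschemes $\widetilde Y, \widetilde W \subset \overline{X}$ with $\widetilde Y \cap X = Y$ and $\widetilde W \cap X = W$ (take scheme-theoretic closures, and if necessary apply \cref{lem:goodpro} (\ref{lem:goodpro:clsub}) to arrange $\Ass(\overline X)\subset X$). Since $Y\cap\Ass(X)=W\cap\Ass(X)=\emptyset$, both $\widetilde Y$ and $\widetilde W$ avoid $\Ass(\overline X)$, so the projective local heights $\lambda_{\widetilde Y}, \lambda_{\widetilde W}$ on $\overline{X}$ are defined, and \cref{lem:localheightbyprojectivization} gives $\lambda_{Y} = \lambda_{\widetilde Y}\circ(i\times\id) + O(\lambda_{\partial X}) + O_{M_K}(1)$, and likewise for $W$. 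Since the scheme-theoretic operations $\cap, +, \cup$ commute with restriction to the open subset $X\subset \overline{X}$, the subschemes $\widetilde Y\cap \widetilde W$, $\widetilde Y+\widetilde W$, and $\widetilde Y\cup \widetilde W$ are extensions to $\overline X$ of $Y\cap W$, $Y+W$, and $Y\cup W$ respectively. Any two extensions of the same closed subscheme of $X$ to $\overline X$ differ only on $\overline X\setminus X$, so by \cref{thm:basiclocht}(5) their local heights differ by $O(\lambda_{\partial X})+O_{M_K}(1)$. Applying \cref{thm:basiclocht} (1)--(4) to $\widetilde Y, \widetilde W$ on $\overline X$, pulling back via $i\times\id$, and invoking \cref{lem:localheightbyprojectivization} yields statements (1)--(4). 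For (5), if $\Supp Y \subset \Supp W$, enlarge $\widetilde W$ to $\widetilde W\cup Z$ with $Z\subset \overline X$ reduced and $\Supp Z = \overline X\setminus X$; this changes $\lambda_{\widetilde W}\circ(i\times\id)$ by a boundary term, and now $\Supp\widetilde Y\subset \Supp(\widetilde W\cup Z)$, so \cref{thm:basiclocht}(5) applies.

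For item (6), pick good projectivizations $j \colon X' \to \overline{X'}$ and $i \colon X \to \overline{X}$, take the scheme-theoretic closure of $(j, i\circ\varphi) \colon X' \to \overline{X'}\times \overline{X}$, and apply \cref{lem:goodpro} (\ref{lem:goodpro:clsub}) to obtain a good projectivization $\Gamma_0 \supset X'$ with projections $p_1, p_2$ satisfying $p_2|_{X'} = i\circ\varphi$. Choose $\widetilde Y \subset \overline{X}$ extending $Y$; since $\Ass(\Gamma_0)\subset X'$ (with $\Ass(\Gamma_0)=\Ass(X')$ because $X'\hookrightarrow\Gamma_0$ is an open immersion), we have $p_2(\Ass(\Gamma_0)) = i(\varphi(\Ass(X')))$, which is disjoint from $\widetilde Y$ by the hypothesis $\varphi(\Ass(X'))\cap Y=\emptyset$. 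Thus $p_2^{-1}(\widetilde Y)$ is well-defined and restricts to $\varphi^{-1}(Y)$ on $X'$. Apply \cref{thm:basiclocht} (\ref{propertypullback}) to $p_2$ on $\Gamma_0$, restrict along $X'\hookrightarrow \Gamma_0$, and convert the compactified local heights to $\lambda_Y\circ(\varphi\times\id)$ and $\lambda_{\varphi^{-1}(Y)}$ via \cref{lem:localheightbyprojectivization}; the residual error $\lambda_{\partial X}\circ(\varphi\times\id)$ that appears is absorbed into $O(\lambda_{\partial X'})+O_{M_K}(1)$ by \cref{cor:pullbackbfc}. Finally, item (7) is exactly the content of \cref{lem:pullbackbdfcproper} (recorded in \cref{rmk:bdrpull}).

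The main obstacle is the bookkeeping in (6): one must juggle three projectivizations $\overline X, \overline{X'}, \Gamma_0$ simultaneously and verify that every boundary error arising on $\Gamma_0$ descends correctly to $O(\lambda_{\partial X'})+O_{M_K}(1)$ on $X'$. This rests on \cref{cor:indofbdryfunc} (boundary functions are intrinsic to the quasi-projective scheme) together with \cref{cor:pullbackbfc} (pullbacks of boundary functions by arbitrary morphisms are controlled by the boundary function of the source).
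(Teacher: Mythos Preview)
Your approach is correct but takes a different route from the paper. The paper's one-line proof works directly at the level of \emph{presentations}: it invokes \cref{prop:basicprop} (the presentation-level identities for $\cap$, $+$, base change, and pullback), combines this with \cref{thm:induptobdy} (two presentations of the same closed subscheme give local heights that agree up to boundary), and uses \cref{cor:pullbackbfc} and \cref{rmk:bdrpull} for the boundary bookkeeping. For instance, item~(3) follows because $Y\subset W$ forces $Y=Y\cap W$, so $\lambda_{\Y}$ and $\lambda_{\Y\cap\W}$ are two presentation heights for the same subscheme, hence equal up to boundary, and then $\lambda_{\Y\cap\W}=\min\{\lambda_{\Y},\lambda_{\W}\}\le\lambda_{\W}$ by \cref{prop:basicprop}(1). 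Your approach instead \emph{reduces to the projective case}: you extend everything to a good projectivization and invoke \cref{thm:basiclocht} via \cref{lem:localheightbyprojectivization}. This is more geometric and makes the analogy with \cref{thm:basiclocht} transparent, at the cost of the extra graph construction in~(6). The paper's route is shorter because the presentation calculus already lives on quasi-projective schemes, so no compactification is needed.

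One small slip: where you say ``any two extensions \ldots\ differ only on $\overline X\setminus X$, so by \cref{thm:basiclocht}(5) their local heights differ by $O(\lambda_{\partial X})+O_{M_K}(1)$,'' part~(5) would only give $\lambda_{Y_1}\le C\lambda_{Y_2}+O(\lambda_{\partial X})$ with a multiplicative constant $C$. The statement you actually want (with $C=1$) is precisely \cref{lem:localheightbyprojectivization}, which you cite a sentence later anyway; just drop the reference to \cref{thm:basiclocht}(5) there.
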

\begin{proof}
Follows from the definition of local height associated with presentations and \cref{prop:basicprop,cor:pullbackbfc,thm:induptobdy,rmk:bdrpull}.
\end{proof}

\subsection{Arithmetic distance function on quasi-projective schemes}

In this subsection we fix an infinite field $K$ equipped with a proper set of absolute values $M_{K}$.
We fix an algebraic closure $ \KK$ of $K$.

\begin{defn}
Let $X$ a quasi-projective scheme over $\KK$.
Suppose $ \Delta_{X} \cap \Ass(X \times X) =  \emptyset$.
Then a local height function associated with $ \Delta_{X}$ is denoted by $ \delta_{X}$ and called an arithmetic distance function on $X$:
\[
\delta_{X}(x,y,v) = \lambda_{ \Delta_{X}}(x,y,v) 
\]
for $(x,y, v) \in (X \times X)(\KK) \times M(\KK)$.
Note that this is determined up to boundary function on $X \times X$.
\end{defn}

\begin{prop}[Basic properties of arithmetic distance function on quasi-projective schemes]\label{prop:arithdistbasicqp}
Let $X$ be a quasi-projective scheme over $\KK$ and suppose $ \Delta_{X} \cap (X\times X) = \emptyset$.
Fix an arithmetic distance function $ \delta_{X}$.

\begin{enumerate}
\item {\rm (Symmetry)} We have
\begin{align*}
\delta_{X}(x,y,v) = \delta_{X}(y,x,v) + O( \lambda_{\partial (X\times X)}(x,y,v))+ O_{M_{K}}(1)
\end{align*}
for all $(x,y,v) \in (X\times X )(\KK) \times M(\KK)$. 

\item {\rm (Triangle inequality I)} We have
\begin{align*}
 \min\{  \delta_{X}(x,y,v), \delta_{X}(y,z,v)  \} \leq  \delta_{X}(x,z,v) + O( \lambda_{\partial (X \times X \times X)}(x,y,z,v))+ O_{M_{K}}(1)
\end{align*}
for all $(x,y,z,v) \in X(\KK)^{3} \times M(\KK)$.

\item {\rm (Triangle inequality II)} Let $Y \subset X$ be a closed subscheme such that $Y\cap \Ass(X) =  \emptyset$.
Fix a local height $ \lambda_{Y}$ associated with $Y$.
Then we have
\begin{align*}
\min\{ \lambda_{Y}(x,v), \delta_{X}(x,y, v) \} \leq \lambda_{Y}(y,v) + O( \lambda_{\partial (X \times X)}(x,y,v))+ O_{M_{K}}(1)
\end{align*}
for all $(x,y,v) \in X(\KK)^{2} \times M(\KK)$.

\item Let $y \in X(\KK)$. Suppose $y \in X$, as a closed point, is not an associated point of $X$.
Fix a local height function $ \lambda_{y}$ associated with $\{y\}$.
Then 
\begin{align*}
\delta_{X}(x,y, v)= \lambda_{y}(x,v) + O( \lambda_{\partial X}(x,v))+ O_{M_{K}}(1)
\end{align*}
for all $(x,v)\in (X(\KK)\setminus \{y\})\times M(\KK)$.
 
\end{enumerate}

\end{prop}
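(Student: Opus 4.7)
The plan is to mimic the proof of \cref{prop:arithdistbasic}, replacing each use of \cref{thm:basiclocht} by the corresponding item of \cref{thm:basiclochtqp}, and tracking the extra $O(\lambda_{\partial (\cdot)})$ error terms that appear at each step. The key new inputs are \cref{cor:pullbackbfc} (which controls the pullback of a boundary function under an arbitrary morphism) and \cref{rmk:bdrpull}/\cref{lem:bdrprod} (which describe boundary functions under proper morphisms and on products). Since each boundary error term is pulled back through a flat projection or a closed immersion between quasi-projective schemes over $\KK$, \cref{cor:pullbackbfc} will absorb it into a boundary function on the appropriate source.

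For (1), apply \cref{thm:basiclochtqp}(\ref{propertypullbackqp}) to the swap involution $\sigma \colon X\times X \to X\times X$, $(x,y)\mapsto (y,x)$. Since $\sigma$ is an isomorphism we have $\sigma(\Ass(X\times X))\cap \Delta_{X} = \emptyset$ and $\sigma^{-1}(\Delta_{X}) = \Delta_{X}$ as schemes, so $\delta_{X}\circ(\sigma\times \id) = \delta_{X} + O(\lambda_{\partial(X\times X)}) + O_{M_{K}}(1)$. For (2), consider the three projections $\pr_{12}, \pr_{13}, \pr_{23}\colon X\times X\times X \to X\times X$; they are flat, so $\pr_{ij}^{-1}(\Delta_{X})$ is a closed subscheme of $X^{3}$ disjoint from $\Ass(X^{3})$. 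The scheme-theoretic inclusion
\[
\pr_{12}^{-1}(\Delta_{X}) \cap \pr_{23}^{-1}(\Delta_{X}) \subset \pr_{13}^{-1}(\Delta_{X})
\]
(which can be checked affine-locally from the ideals) combined with items (\ref{propertypullbackqp}), (\ref{propertyintersecqp}), (\ref{propertycontainqp}) of \cref{thm:basiclochtqp} on $X^{3}$ yields the desired inequality up to $O(\lambda_{\partial X^{3}})+O_{M_{K}}(1)$. Each boundary error coming from pulling back $\lambda_{\partial(X\times X)}$ along some $\pr_{ij}$ is dominated by $\lambda_{\partial X^{3}}$ by \cref{cor:pullbackbfc}.

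For (3), run the same argument one dimension lower, using $\pr_{1},\pr_{2}\colon X\times X\to X$ and the scheme-theoretic inclusion $\pr_{1}^{-1}(Y)\cap \Delta_{X}\subset \pr_{2}^{-1}(Y)$; the errors are absorbed into $O(\lambda_{\partial(X\times X)})+O_{M_{K}}(1)$. For (4), consider the closed immersion $i_{y}\colon X\to X\times X$, $x\mapsto (x,y)$. The assumption that $y$ is not an associated point of $X$ gives $i_{y}(\Ass(X))\cap \Delta_{X} = \emptyset$ and $i_{y}^{-1}(\Delta_{X}) = \{y\}$ as closed subschemes of $X$, so \cref{thm:basiclochtqp}(\ref{propertypullbackqp}) produces the equality $\delta_{X}(x,y,v) = \lambda_{y}(x,v) + O(\lambda_{\partial X}(x,v)) + O_{M_{K}}(1)$.

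The only non-formal point is the verification of the scheme-theoretic containments ($\sigma^{-1}(\Delta_{X}) = \Delta_{X}$, $\pr_{12}^{-1}(\Delta_{X})\cap \pr_{23}^{-1}(\Delta_{X})\subset \pr_{13}^{-1}(\Delta_{X})$, $\pr_{1}^{-1}(Y)\cap \Delta_{X}\subset \pr_{2}^{-1}(Y)$, and $i_{y}^{-1}(\Delta_{X})=\{y\}$), which one checks directly on affine charts without any reducedness hypothesis; I expect this to be the only place where a small amount of care is required, and after it the rest is mechanical bookkeeping of error terms through \cref{thm:basiclochtqp} and \cref{cor:pullbackbfc}.
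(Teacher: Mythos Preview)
Your proposal is correct and follows essentially the same approach as the paper's proof: both mimic the projective case (\cref{prop:arithdistbasic}) by replacing each use of \cref{thm:basiclocht} with the corresponding item of \cref{thm:basiclochtqp}, using the same auxiliary morphisms ($\sigma$, the $\pr_{ij}$, $\pr_{i}$, and $i_{y}$) and the same scheme-theoretic inclusions. The paper likewise invokes \cref{cor:pullbackbfc} at the end of part (4) to convert the $O(\lambda_{\partial(X\times X)}(x,y,v))$ error (with $y$ fixed) into $O(\lambda_{\partial X}(x,v))$ via the closed immersion $i_{y}$, exactly as you anticipate.
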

\begin{proof}
(1) Apply \cref{thm:basiclochtqp}(\ref{propertypullbackqp}) to the automorphism
\[
\sigma \colon X\times X \longrightarrow X\times X; (x,y) \mapsto (y,x).
\]
Note that $ \sigma^{-1}( \Delta_{X}) = \Delta_{X}$.

(2) Consider the following projections:
\[
\xymatrix{
& X\times X \times X \ar[dl]_{\pr_{12}} \ar[d]_{\pr_{13}} \ar[rd]^{\pr_{23}} &\\
X\times X & X\times X & X\times X.
}
\]
Note that $\pr_{ij}$ are flat and hence we have $\pr_{ij}(\Ass(X\times X \times X)) \subset \Ass(X\times X)$.
Then up to $M_{K}$-bounded functions,  we have
\begin{align*}
&\min\{ \delta_{X}(x,y,v), \delta_{X}(y,z,v)\} \\[3mm]
&= \min\{ \lambda_{ \Delta_{X}}(x,y,v) + O( \lambda_{\partial (X\times X)}(x,y,v)), \lambda_{\Delta_{X}}(y,z,v)+ O( \lambda_{\partial (X\times X)}(y,z,v))\}\\[3mm]
&=\min\{ \lambda_{\pr_{12}^{-1}( \Delta_{X})}(x,y,z,v)  , \lambda_{\pr_{23}^{-1}( \Delta_{X})}(x,y,z,v)  \}  + O( \lambda_{\partial (X\times X \times X)}(x,y,z,v))\\[1mm]
& 	\qquad  \qquad  \qquad \qquad  \qquad  \qquad  \qquad \qquad  \qquad  \qquad  \text{by \cref{thm:basiclochtqp} (\ref{propertypullbackqp})}  \\[3mm]
&= \lambda_{\pr_{12}^{-1}( \Delta_{X})\cap \pr_{23}^{-1}( \Delta_{X})}(x,y,z,v) + O( \lambda_{\partial (X\times X \times X)}(x,y,z,v)) \\[1mm]
&	\qquad  \qquad  \qquad \qquad  \qquad  \qquad  \qquad \qquad  \qquad  \qquad    \text{by \cref{thm:basiclochtqp} (\ref{propertyintersecqp})}  \\[3mm]
&\leq \lambda_{\pr_{13}^{-1}( \Delta_{X})}(x,y,z,v) + O( \lambda_{\partial (X\times X \times X)}(x,y,z,v))\\[1mm]
&\qquad \qquad  \qquad  \qquad \qquad  \qquad  \qquad \qquad \qquad  \txt{ $\pr_{12}^{-1}( \Delta_{X})\cap \pr_{23}^{-1}( \Delta_{X}) \subset \pr_{13}^{-1}( \Delta_{X})$ \\
	and \cref{thm:basiclochtqp}(\ref{propertycontainqp})} \\[3mm]
& = \lambda_{ \Delta_{X}}(x,z,v) + O( \lambda_{\partial (X\times X \times X)}(x,y,z,v))\\[3mm]
& = \delta_{X}(x,z,v) + O( \lambda_{\partial (X\times X \times X)}(x,y,z,v)).
\end{align*}

(3)
Consider the following diagram:
\[
\xymatrix{
X\times X \ar[d]_{\pr_{1}} \\
\qquad X \supset Y.
}
\]
Since $\pr_{1}$ is flat, we have $\pr_{1}(\Ass(X\times X)) \subset \Ass(X)$.
Then up to $M_{K}$-bounded functions, we have
\begin{align*}
&\min\{ \lambda_{Y}(x,v) , \delta_{X}(x,y,v)\}\\[3mm]
& = \min\{ \lambda_{\pr_{1}^{-1}(Y)}(x,y,v), \lambda_{ \Delta_{X}}(x,y,v)\} + O( \lambda_{\partial (X\times X)}(x,y,v))\\[3mm]
&= \lambda_{\pr_{1}^{-1}(Y)\cap \Delta_{X}}(x,y,v) + O( \lambda_{\partial (X\times X)}(x,y,v))
	\qquad \qquad  \text{by \cref{thm:basiclochtqp} (\ref{propertyintersecqp})} \\[3mm]
&\leq \lambda_{\pr_{2}^{-1}(Y)} (x,y,v) + O( \lambda_{\partial (X\times X)}(x,y,v))
	\qquad \qquad   \txt{ $\pr_{1}^{-1}(Y) \cap \Delta_{X} \subset \pr_{2}^{-1}(Y)$\\
	 and  \cref{thm:basiclochtqp}(\ref{propertycontainqp})}\\[3mm]
& = \lambda_{Y}(y,v) + O( \lambda_{\partial (X\times X)}(x,y,v)).
\end{align*}

(4)
Consider the embedding 
\[
i \colon X \longrightarrow X \times X; x \mapsto (x,y).
\]
Since $y \notin \Ass(X)$, $i(\Ass(X))\cap \Delta_{X} =  \emptyset$.
Thus by  \cref{thm:basiclochtqp} (\ref{propertypullbackqp}), up to $M_{K}$-bounded functions, we have
\begin{align*}
\delta_{X}(x,y,v) &= \lambda_{ \Delta_{X}}(x,y,v) + O( \lambda_{\partial (X \times X)}(x,y,v)) \\
&= \lambda_{i^{-1}( \Delta_{X})}(x,v) + O( \lambda_{\partial (X \times X)}(x,y,v)) \\
&= \lambda_{y}(x,v) + O( \lambda_{\partial (X \times X)}(x,y,v)).
\end{align*}
Here 
$  \lambda_{\partial (X \times X)}(x,y,v) = \lambda_{\partial (X \times X)} \circ (i \times \id)(x,v) = O( \lambda_{\partial X}(x,v))$
and we are done.

\end{proof}

\section{Global heights}

In this section we fix an infinite field $K$ equipped with a proper set of absolute values $M_{K}$.
We fix an algebraic closure $ \KK$ of $K$.
Recall that for an intermediate field $K \subset L \subset \KK$,
$M(L)$ is the set of absolute values on $L$ which extend elements of $M_{K}$.

\begin{cons}[Global height function associated with a presentation]\label{cons:globalhtpre}
Let $K \subset L \subset \KK$ be an intermediate field such that $[L:K] < \infty$.
Let $X$ be a projective scheme over $L$ and $Y \subset X$ be a closed subscheme such that
$Y \cap \Ass(X) =  \emptyset$.  
Let $\Y$ be a presentation of $Y$:
\begin{align*}
&\Y = (Y_{L} ; \D_{1}, \dots , \D_{r}) \qquad \text{where}  \\
&\D_{i} = ( s_{D_{i}} ; \L^{(i)}, s^{(i)}_{0}, \dots, s^{(i)}_{n_{i}} ; \M^{(i)}, t^{(i)}_{0}, \dots , t^{(i)}_{m_{i}}  ) .
\end{align*}
For any intermediate field $L \subset L' \subset \KK$, define
\begin{align*}
\lambda_{\Y, L'}  \colon (X\setminus Y)(L') \times M(L') \longrightarrow \R
\end{align*}
by 
\begin{align*}
\lambda_{\Y, L'}(x,v) = \min\{ \lambda_{\D_{1}, L'}(x,v) , \dots , \lambda_{\D_{r}, L'}(x,v)\} 
\end{align*}
where 
\begin{align*}
\lambda_{\D_{i}, L'}(x,v) =  \log \max_{0 \leq k \leq n_{i}} \min_{0\leq l \leq m_{i}}  \left\{ \left| \frac{s_{k}^{(i)}}{s_{D_{i}}t_{l}^{(i)}}(x)\right|_{v} \right\}
\end{align*}
for $(x,v) \in  (X\setminus Y)(L') \times M(L')$.
Note that this map is the induced map in \cref{prop:goodchoice}.

Now define 
\begin{align*}
h_{\Y} \colon (X\setminus Y)(\KK) \longrightarrow \R
\end{align*}
as follows.
For a point $x \in (X\setminus Y)(\KK) $, take an intermediate field $L \subset L' \subset \KK$
such that $[L' : L] < \infty$ and $x \in (X \setminus Y)(L')$.
Then 
\begin{align*}
h_{\Y}(x) = \frac{1}{[L':K]} \sum_{v \in M(L')} [L'_{v}: K_{v|_{K}}] \lambda_{\Y, L'}(x,v).
\end{align*}

\begin{claim}
This map is well-defined, that is, $h_{\Y}(x)$ is independent of the choice of $L'$.
\end{claim}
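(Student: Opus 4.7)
The plan is to reduce to the case where one field contains the other, and then use the well-behaved property of the absolute values in $M_K$ together with \cref{lem:restoratpt} to show that the sum doesn't change when passing to a larger field.

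First I would observe that the sum defining $h_{\Y}(x)$ is actually finite: for $x \in (X \setminus Y)(L')$, each quantity $s_k^{(i)}/(s_{D_i}t_l^{(i)})(x)$ lies in $L'$, and since $M(L')$ is a proper set of absolute values on $L'$ (by the lemma on base change), only finitely many $v \in M(L')$ give a non-unit value. Hence $\lambda_{\Y, L'}(x,v) = 0$ for all but finitely many $v$.

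Next, given two valid choices $L'_1, L'_2 \subset \KK$ with $[L'_i : L] < \infty$ and $x \in (X \setminus Y)(L'_i)$, I would choose a common finite extension $L'' \subset \KK$ containing both, and show that the formula computed with $L'_1$ (resp.\ $L'_2$) agrees with the one computed with $L''$. So the heart of the matter is the following: given $L \subset L' \subset L'' \subset \KK$ with $[L'':L]<\infty$ and $x \in (X \setminus Y)(L')$, show that
\[
\frac{1}{[L'':K]}\sum_{w \in M(L'')}[L''_w : K_{w|_K}]\,\lambda_{\Y, L''}(x,w)
= \frac{1}{[L':K]}\sum_{v \in M(L')}[L'_v : K_{v|_K}]\,\lambda_{\Y, L'}(x,v).
\]
To prove this I would group the sum on the left by restriction: each $w \in M(L'')$ restricts to some $v \in M(L')$. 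By \cref{lem:restoratpt} (applied to the point $x \in X(L')$), the value $\lambda_{\Y, L''}(x,w)$ depends only on $v = w|_{L'}$ and equals $\lambda_{\Y, L'}(x,v)$. Hence
\[
\sum_{w \in M(L'')}[L''_w : K_{v|_K}]\,\lambda_{\Y, L''}(x,w)
= \sum_{v \in M(L')}\Bigl(\sum_{w \mid v}[L''_w : L'_v]\Bigr)[L'_v : K_{v|_K}]\,\lambda_{\Y, L'}(x,v).
\]
Since $v$ is well-behaved (it is the restriction to $L'$ of elements of $M_K$, and we already proved such restrictions are well-behaved), the inner sum equals $[L'':L']$. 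Dividing by $[L'':K] = [L'':L']\,[L':K]$ cancels this factor and yields the desired equality.

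The main obstacle is merely bookkeeping: matching the correct local degrees $[L''_w : K_{w|_K}]$ with the telescoping $[L''_w : L'_v][L'_v : K_{v|_K}]$, and noticing that $w|_K = v|_K$ so that $K_{w|_K} = K_{v|_K}$. Once this factorization is in place and \cref{lem:restoratpt} has been invoked to eliminate the dependence of $\lambda_{\Y, L''}(x,w)$ on the particular extension $w$, the well-behaved identity does all the work. Finally, to compare two arbitrary choices $L'_1$ and $L'_2$, I would apply the above to each with $L'' = L'_1 \cdot L'_2$, so both agree with the formula computed over $L''$, hence with each other.
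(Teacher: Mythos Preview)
Your proposal is correct and follows essentially the same route as the paper: reduce to a tower $L' \subset L''$, group the $M(L'')$-sum by restriction to $M(L')$, use that $\lambda_{\Y,L''}(x,w)=\lambda_{\Y,L'}(x,v)$ because $x \in (X\setminus Y)(L')$ (the paper states this directly rather than citing \cref{lem:restoratpt}), and then invoke well-behavedness to collapse the inner local-degree sum to $[L'':L']$. Your extra remark on finiteness of the sum is a harmless addition not present in the paper's argument.
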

\begin{proof}
Let $L'$ and $L''$ be two intermediate fields which are finite over $L$ and such that
$x \in  (X \setminus Y)(L')$ and $x \in  (X \setminus Y)(L'')$.
We prove $h_{\Y}(x)$'s defined by $L'$ and $L''$ are the same.
By replacing $L''$ with the composite field $L' \cdot L''$, we may assume $L'\subset L''$.
Then we calculate
\begin{align*}
&\frac{1}{[L'':K]} \sum_{w \in M(L'')} [L''_{w} : K_{w|_{K}}] \lambda_{\Y, L''}(x, w)\\[3mm]
&= \frac{1}{[L'':K]} \sum_{v\in M(L')} \sum_{\tiny \txt{$w \in M(L'')$\\$w|_{L'}=v$}} [L''_{w} : L'_{v}][L'_{v} : K_{v|_{K}}] \lambda_{\Y, L''}(x,w)\\[3mm]
&=\frac{1}{[L'':K]} \sum_{v\in M(L')} \sum_{\tiny \txt{$w \in M(L'')$\\$w|_{L'}=v$}} [L''_{w} : L'_{v}][L'_{v} : K_{v|_{K}}] \lambda_{\Y, L'}(x,v)\\[-5mm]
& \qquad \qquad \qquad \qquad \qquad \qquad \qquad \qquad \qquad \qquad  \quad \text{since $x \in  (X \setminus Y)(L')$}\\[3mm]
&= \frac{[L'': L']}{[L'':K]} \sum_{v \in M(L')} [L'_{v} : K_{v|_{K}}] \lambda_{\Y, L'}(x,v) \qquad \text{by well-behavedness}\\[3mm]
&= \frac{1}{[L' : K]} \sum_{v \in M(L')} [L'_{v} : K_{v|_{K}}] \lambda_{\Y, L'}(x,v) 
\end{align*}
and we are done.
\end{proof}

\end{cons}

\begin{cons}[Global height function associated with a closed subscheme]\label{cons:globalht}

Let $X$ be a projective scheme over $\KK$.
Let $Y\subset X$ be a closed subscheme such that $Y \cap \Ass(X)=  \emptyset$.
Take an intermediate field $K \subset L \subset \KK$ such that 
\begin{itemize}
\item $[L:K] < \infty$;
\item there is a projective scheme $X_{L}$ and a closed subscheme $Y_{L} \subset X_{L}$ such that 
$(X_{L})_{\KK} \simeq X$ as $\KK$-schemes and $(Y_{L})_{\KK} \simeq Y$ via this isomorphism:
\[
\xymatrix{
(X_{L})_{\KK} \ar[r]^{\sim}  & X \\
(Y_{L})_{\KK} \ar@{^{(}->}[u] \ar[r]^{\sim} & Y \ar@{^{(}->}[u].
}
\]
\end{itemize}
We identify $(X_{L})_{\KK}$ with $X$ by this fixed isomorphism.
Note that $Y_{L} \cap \Ass(X_{L}) =  \emptyset$.
Take a presentation $\Y$ of $Y_{L}$.
The class of $h_{\Y}$ in the set of maps from $(X\setminus Y)(\KK)$ to $\R$ modulo 
bounded functions is called the global height function associated with $Y$.
Each element of the equivalence class is called a global height function associated with $Y$.
\begin{claim}
This is well-defined, that is, if $L'$ and $\Y'$ are another such choices, then
$h_{\Y}-h_{\Y'}$ is a bounded function.
\end{claim}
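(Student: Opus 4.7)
The plan is to reduce the problem to the situation of \cref{prop:indpre} by base changing both presentations to a common finite extension of $K$, then to integrate the resulting pointwise $M_K$-bound over a finite extension using the well-behavedness assumption.

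First I would choose an intermediate field $K \subset L'' \subset \KK$ with $[L'':K] < \infty$ containing both $L$ and $L'$ and large enough that the fixed isomorphisms $(X_L)_{\KK} \simeq X \simeq (X_{L'})_{\KK}$ (carrying $(Y_L)_{\KK}, (Y_{L'})_{\KK}$ to $Y$) are already defined over $L''$. This identifies $(X_L)_{L''}$ with $(X_{L'})_{L''}$ as $L''$-schemes and, under this identification, $(Y_L)_{L''} = (Y_{L'})_{L''}$. By \cref{lem:prefldext}, $\Y_{L''}$ and $\Y'_{L''}$ are then presentations of the same closed subscheme inside the same projective $L''$-scheme.

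Next I would invoke \cref{prop:indpre} on this common scheme to obtain an $M_K$-constant $\gamma$ with
\[
|\lambda_{\Y_{L''}}(x,v) - \lambda_{\Y'_{L''}}(x,v)| \leq \gamma(v)
\]
for every $(x,v) \in (X\setminus Y)(\KK) \times M(\KK)$. By \cref{prop:basicprop}(3), base change does not change the local height, so the same inequality holds with $\lambda_\Y$ and $\lambda_{\Y'}$ in place of $\lambda_{\Y_{L''}}$ and $\lambda_{\Y'_{L''}}$. In particular, for any finite intermediate field $L''' \supset L''$, the induced maps from \cref{cons:globalhtpre} satisfy $|\lambda_{\Y, L'''}(x,v) - \lambda_{\Y', L'''}(x,v)| \leq \gamma(v)$ on $(X\setminus Y)(L''') \times M(L''')$.

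Finally, given $x \in (X\setminus Y)(\KK)$, I would pick $L'''$ finite over $L''$ with $x \in (X\setminus Y)(L''')$. Since $h_\Y(x)$ and $h_{\Y'}(x)$ may both be computed using $L'''$, the triangle inequality gives
\[
|h_\Y(x) - h_{\Y'}(x)| \leq \frac{1}{[L''':K]}\sum_{v \in M(L''')} [L'''_v : K_{v|_K}]\, \gamma(v).
\]
Grouping the sum by the restriction $v_0 = v|_K \in M_K$ and using well-behavedness of each $v_0$ on $L'''/K$, the inner sum $\sum_{v|v_0}[L'''_v : K_{v_0}]$ equals $[L''':K]$, so the right-hand side collapses to $\sum_{v_0 \in M_K} \gamma(v_0)$, a finite constant independent of $x$ and $L'''$ because $\gamma$ vanishes off a finite subset of $M_K$. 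This proves that $h_\Y - h_{\Y'}$ is bounded. The one step that is more than bookkeeping is the cancellation of $[L''':K]$ via well-behavedness; this is precisely the mechanism that turns a pointwise $M_K$-constant bound on local heights into a uniform bound on global heights.
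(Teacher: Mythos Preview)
Your proof is correct and follows essentially the same route as the paper: pass to a common finite extension $L''$ over which both models and the identifying isomorphism are defined, apply \cref{prop:indpre} to the two base-changed presentations to obtain an $M_K$-constant bound on the difference of local heights, and then sum over $M(L''')$ using well-behavedness to collapse the bound to $\sum_{v_0 \in M_K}\gamma(v_0)$. The only cosmetic difference is that the paper first reduces to $L=L'$ by noting $h_{\Y_{L''}}=h_{\Y}$, whereas you keep both presentations and compute both global heights directly over $L'''$; these amount to the same argument.
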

\begin{proof}
Let $L'' $ be a sufficiently large finite extension of the composite field $L\cdot L'$ such that
the fixed isomorphism $(X_{L})_{\KK} \simeq X \simeq (X_{L'})_{\KK}$ is defined over $L''$.
Then $\Y_{L''}$ and $\Y'_{L''}$ are presentations of $(Y_{L})_{L''} \simeq (Y_{L'})_{L''}$ (the isomorphism via the fixed isomorphism
$(X_{L})_{L''} \simeq (X_{L'})_{L''}$).
Also, we have $h_{\Y_{L''}} = h_{\Y}$ and $h_{\Y'_{L''}} = h_{\Y'}$.
Thus we may assume $L=L'$ and $\Y$ and $\Y'$ are two presentations of $Y_{L} \subset X_{L}$.

By \cref{prop:indpre}, there is an $M_{K}$-constant $ \gamma$ such that
\begin{align*}
| \lambda_{\Y}(x,v)- \lambda_{\Y'}(x,v)| \leq \gamma(v)
\end{align*}
for $(x,v) \in (X\setminus Y)(\KK) \times M(\KK)$.

Take any $x \in (X \setminus Y)(\KK) \times M(\KK)$.
Take any intermediate field $L \subset L' \subset \KK$ such that $[L': L] < \infty$ and $x \in (X_{L}\setminus Y_{L})(L')$.
Then
\begin{align*}
&h_{\Y}(x) - h_{\Y'}(x) \\[3mm]
&= \frac{1}{[L':K]} \sum_{v \in M(L')} [L'_{v}: K_{v|_{K}}] \Bigl( \lambda_{\Y, L'}(x,v) - \lambda_{\Y',L'}(x,v) \Bigr).
\end{align*}
For $w \in M(\KK)$,  set $v=w|_{L'}$ and $v_{0} = w|_{K} (= v|_{K})$.
Then by the definitions of local heights, we have
\begin{align*}
& \lambda_{\Y, L'}(x,v) = \lambda_{\Y}(x, w);\\
& \lambda_{\Y', L'}(x,v) = \lambda_{\Y'}(x, w)
\end{align*}
and therefore
\begin{align*}
|\lambda_{\Y, L'}(x,v) - \lambda_{\Y',L'}(x,v)| \leq \gamma(v_{0}).
\end{align*}
Thus 
\begin{align*}
|h_{\Y}(x) - h_{\Y'}(x) | &\leq  \frac{1}{[L':K]} \sum_{v \in M(L')} [L'_{v}: K_{v|_{K}}] \gamma(v|_{K})\\[3mm]
& = \frac{1}{[L':K]} \sum_{v_{0} \in M_{K}} \gamma(v_{0}) \sum_{\tiny \txt{$v \in M(L')$ \\ $v|_{K}=v_{0}$}} [L'_{v} : K_{v_{0}}]\\[3mm]
& = \sum_{v_{0} \in M_{K}} \gamma(v_{0})
\end{align*}
and we are done.
\end{proof}
\end{cons}

\begin{defn}
Notation as in \cref{cons:globalht}.
The equivalence class of $h_{\Y}$ modulo bounded functions is denoted by $h_{Y}$
 and called the global height function associated with $Y$.
 Each representative function is usually denoted by $h_{Y}$ too and called a global height function associated with $Y$.
\end{defn}

\begin{prop}[Basic properties of global height functions]\label{prop:basicglobalht}
Let $X$ be a projective scheme over $\KK$.
Let $Y, Z \subset X$ be closed subschemes such that $Y \cap \Ass(X) = Z \cap \Ass(X) =  \emptyset$.
Then the following hold.
\begin{enumerate}
\item $h_{Y\cap Z} \leq \min\{ h_{Y}, h_{Z} \} + O(1)$ on $(X \setminus Y\cap Z)(\KK)$. 
\item $h_{Y+Z} = h_{Y}+h_{Z} + O(1)$ on $(X \setminus (Y\cup Z))(\KK)$.
\item If $Y \subset Z$, then $h_{Y} \leq h_{Z}+O(1)$ on $(X \setminus Y)(\KK)$.
\item $\max\{ h_{Y}, h_{Z} \} \leq h_{Y\cup Z} + O(1) \leq h_{Y}+h_{Z} + O(1)$ on $(X \setminus (Y\cup Z))(\KK)$.
\item If $\Supp Y \subset \Supp Z$, then there is a positive constant $c>0$ such that
\[
h_{Y} \leq c h_{Z} + O(1)
\]
on $(X \setminus Y)(\KK)$.
\item Let $X'$ be a projective scheme over $\KK$ and $\varphi \colon X ' \longrightarrow X$ be a morphism over $\KK$.
Suppose $\varphi(\Ass(X')) \cap Y =  \emptyset$.
Then 
\[
h_{Y} \circ \varphi = h_{\varphi^{-1}(Y)} + O(1)
\]
on $(X'\setminus \varphi^{-1}(Y))(\KK)$.
\end{enumerate}
\end{prop}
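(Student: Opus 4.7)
The plan is to derive each property of the global height function from the corresponding property of local heights, established in \cref{thm:basiclocht}, by averaging over absolute values. Throughout, fix an intermediate field $K \subset L \subset \KK$ with $[L:K]<\infty$ over which $X$ and the relevant closed subschemes $Y, Z$ (and, in (6), also $X'$ and $\varphi$) are defined, and fix presentations $\Y, \cZ$ of $Y_L, Z_L$. By definition, $h_Y$ (resp.\ $h_Z$) agrees with $h_{\Y}$ (resp.\ $h_{\cZ}$) up to a bounded function on $(X\setminus Y)(\KK)$ (resp.\ $(X\setminus Z)(\KK)$), so it suffices to prove each statement for the presentation-level global heights.

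The main observation, which I will use repeatedly, is that if $\lambda_1,\lambda_2 \colon (X\setminus Y)(\KK) \times M(\KK) \to \R$ satisfy $\lambda_1 \leq \lambda_2 + O_{M_K}(1)$ with $M_K$-constant $\gamma$, then for any $x \in (X\setminus Y)(L')$ with $L \subset L' \subset \KK$ finite over $L$,
\begin{align*}
\frac{1}{[L':K]} \sum_{v \in M(L')} [L'_v : K_{v|_K}] \bigl(\lambda_1(x,v)-\lambda_2(x,v)\bigr)
\leq \frac{1}{[L':K]} \sum_{v \in M(L')} [L'_v : K_{v|_K}] \gamma(v|_K)
= \sum_{v_0 \in M_K} \gamma(v_0),
\end{align*}
by well-behavedness (exactly as in the well-definedness claim inside \cref{cons:globalht}). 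Since $\gamma(v_0)=0$ for all but finitely many $v_0 \in M_K$, the right-hand side is a finite constant, independent of $x$ and $L'$. Thus each local-height inequality up to $M_K$-bounded function transforms into the corresponding global-height inequality up to a bounded function.

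With this observation, each of (1)--(6) follows directly from the corresponding clause of \cref{thm:basiclocht}: for (1) and (4) use the intersection/union formulas together with $\min\{h_Y, h_Z\}$ and $\max\{h_Y,h_Z\}$ being preserved under the averaging (for a sum of a minimum, one uses the crude bound $\min \leq \lambda_i$ on each side of the inequality and averages); for (2) use the additivity $\lambda_{Y+Z} = \lambda_Y + \lambda_Z + O_{M_K}(1)$, which is linear and so averages cleanly; (3) and (5) are monotonicity statements preserved under the (positive) linear averaging; and (6) is the functoriality under the morphism $\varphi$, taking a presentation $\Y$ of $Y_L$ such that $\varphi^*\Y$ is well-defined (which exists by the remark following \cref{lem:existpres}) and applying $\lambda_Y \circ (\varphi \times \id) = \lambda_{\varphi^{-1}(Y)} + O_{M_K}(1)$ pointwise in $v$ before averaging.

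The only step that requires care is verifying that the averaging actually commutes with $\min$ and $\max$ as needed in (1) and (4). The cleanest way is to not try to commute them: given $\min\{\lambda_Y,\lambda_Z\} \leq \lambda_Y$ and $\leq \lambda_Z$, averaging yields $h_{\min}(x) \leq h_Y(x)$ and $\leq h_Z(x)$ (up to bounded functions), hence $h_{Y\cap W} \leq \min\{h_Y,h_Z\}+O(1)$, since $\lambda_{Y\cap Z} = \min\{\lambda_Y,\lambda_Z\}+O_{M_K}(1)$; the opposite inequality, which is \emph{not} claimed in (1), would fail precisely because min and average do not commute. The analogous argument gives (4). No other step is a genuine obstacle: everything reduces to the averaging lemma above plus \cref{thm:basiclocht}.
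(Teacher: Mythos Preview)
Your proposal is correct and follows exactly the route the paper intends: the paper's proof is the single sentence ``These follow from the definition of global height, \cref{prop:basicprop}, and \cref{thm:basiclocht},'' and you have simply unpacked the averaging step (which is already carried out verbatim in the well-definedness claim inside \cref{cons:globalht}). Your explicit remark that one does \emph{not} attempt to commute $\min$ with the sum, but instead uses $\min\{\lambda_Y,\lambda_Z\}\leq \lambda_Y,\lambda_Z$ separately before averaging, is exactly the right point and explains why (1) is only an inequality.
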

\begin{proof}
These follow from the definition of global height, \cref{prop:basicprop}, and \cref{thm:basiclocht}.
\end{proof}

\end{document}